\theoremstyle{plain}
\newtheorem{theorem}{Theorem}[section]
\newtheorem{lemma}[theorem]{Lemma}
\newtheorem{corollary}[theorem]{Corollary}
\newtheorem{proposition}[theorem]{Proposition}
\newtheorem{claim}[theorem]{Claim}
\theoremstyle{definition}
\newtheorem{conjecture}[theorem]{Conjecture}
\theoremstyle{remark}
\title{\bf The $r$-matching sequencibility of complete multi-$k$-partite $k$-graphs}
\author{Adam Mammoliti \\
\small School of Mathematics and Statistics\\[-0.8ex]
\small UNSW Sydney\\[-0.8ex]
\small NSW 2052, Australia\\
\small\tt adam.mammoliti@outlook.com.au\\
}
\date{ 
\small Mathematics Subject Classifications: 05C65, 05C70, 05C78}
\begin{document}
\maketitle

\begin{abstract}
Alspach [{\sl Bull. Inst. Combin. Appl.}~{\bf 52} (2008), 7--20] defined
the maximal matching sequencibility of a graph $G$, denoted~$ms(G)$,
to be the largest integer $s$ for which
there is an ordering of the edges of $G$ such that
every $s$ consecutive edges form a matching.
In this paper,
we consider the natural analogue for hypergraphs of this and related results
and determine $ms(\lambda\mathcal{K}_{n_1,\ldots, n_k})$
where $\lambda\mathcal{K}_{n_1,\ldots, n_k}$ denotes
the multi-$k$-partite $k$-graph with edge multiplicity $\lambda$
and parts of sizes $n_1,\ldots,n_k$, respectively.
It turns out that these invariants may be given surprisingly precise and somewhat elegant descriptions, in a much more general setting.
\end{abstract}

\bigskip\noindent \textbf{Keywords:}
Hypergraph; complete multi-$k$-partite $k$-graph; edge ordering;
matching decomposition; matching sequencibility; complete bipartite graph.

\section{Introduction}

Alspach~\cite{MR2394738} defined the
(maximal) {\em matching sequencibility} of a graph $G$,
denoted $ms(G)$, to be the maximum integer $s$ such that
there exist an ordering of $G$'s edges so that each $s$
consecutive edges form a matching.
Alspach~\cite{MR2394738} determined the value of $ms(K_n)$,
as follows.

\begin{theorem}\label{thm:Matching sequence} 
For an integer $n \geq 3$,
\[
ms(K_n) = \left\lfloor \frac{n-1}{2} \right\rfloor \,.
\]
\end{theorem}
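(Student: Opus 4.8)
I split the bound into an upper and a lower estimate.

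\emph{Upper bound.} Every matching in $K_n$ has at most $\lfloor n/2\rfloor$ edges, so if $s$ consecutive edges always form a matching then $s\le\lfloor n/2\rfloor$; since $\lfloor n/2\rfloor=\lfloor(n-1)/2\rfloor$ for odd $n$, this already settles the odd case. For even $n\ge4$ I would rule out $s=n/2$: if every $n/2$ consecutive edges formed a matching they would in fact form a \emph{perfect} matching of $K_n$, and since $\binom{n}{2}\ge n/2+1$ there are overlapping windows $W=\{e_i,\dots,e_{i+n/2-1}\}$ and $W'=\{e_{i+1},\dots,e_{i+n/2}\}$; as $W'=W-e_i+e_{i+n/2}$ and both are perfect matchings, $e_{i+n/2}$ must cover exactly the two vertices freed by deleting $e_i$, forcing $e_{i+n/2}=e_i$ — impossible, since the edges of the ordering are distinct. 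Hence $ms(K_n)\le n/2-1=\lfloor(n-1)/2\rfloor$.

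\emph{Lower bound, even $n=2m$.} On the vertex set $\{\infty\}\cup\mathbb Z_{2m-1}$ take the $1$-factorization with factors $F_i=\{\{\infty,i\}\}\cup\{\{i-k,i+k\}:1\le k\le m-1\}$ for $i\in\mathbb Z_{2m-1}$, order each $F_i$ by increasing ``distance from the basepoint $i$'', namely $\{\infty,i\},\{i-1,i+1\},\dots,\{i-(m-1),i+(m-1)\}$, and concatenate $F_0F_1\cdots F_{2m-2}$. Since $|F_i|=m>m-1$, every window of length $s=m-1$ either lies inside one $F_i$ (so is a matching) or consists of a suffix of some $F_i$ followed by a prefix of $F_{i+1}$. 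In the second case the suffix is the collection of ``far'' edges of $F_i$, whose endpoints fill a long arc of $\mathbb Z_{2m-1}$ around the point opposite $i$, whereas the prefix of $F_{i+1}$ uses only $\infty$ together with a short arc around $i+1$; a direct check shows these vertex sets are disjoint, so the window is a matching. Thus $ms(K_{2m})\ge m-1$.

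\emph{Lower bound, odd $n=2m+1$.} This is the crux. The analogous recipe — concatenate the near-$1$-factors $N_i=\{\{i-k,i+k\}:1\le k\le m\}$, each ordered by distance from $i$ — fails, but only barely: a length-$m$ window straddling a junction $N_iN_{i+1}$ again splits into a long arc near the point opposite $i$ plus an arc near $i+1$, and with no vertex $\infty$ available to absorb one endpoint the second arc is forced to be one vertex too wide and meets the first in exactly one vertex. Repairing this single overlap at each of the $2m$ junctions is the main obstacle. The right framework is that, when $N_i$ precedes $N_{i+1}$, a straddling window of length $m$ is a matching if and only if $\mathrm{pos}_{N_{i+1}}(e')\ge\mathrm{pos}_{N_i}(e)$ for every pair of vertex-sharing edges $e\in N_i$, $e'\in N_{i+1}$; one then has to choose a suitable near-$1$-factorization of $K_{2m+1}$, an order of its factors, and an internal order of each factor so that this chain of inequalities is simultaneously satisfiable at all junctions. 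Once such a scheme is produced and verified, the lower and upper bounds coincide and the proof is complete. (The small cases are easily handled directly; for $n=5$ a valid ordering is precisely a Hamilton path in the Petersen graph, which exists.)
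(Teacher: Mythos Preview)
The paper does not contain a proof of this theorem; it is quoted as Alspach's result and used only as background. So there is no ``paper's own proof'' to compare against, and the question is simply whether your argument stands on its own.

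Your upper bound is correct and standard. Your lower bound for even $n$ is also fine: the suffix of $F_i$ occupies an arc of $\mathbb{Z}_{2m-1}$ centred opposite $i$, the prefix of $F_{i+1}$ occupies $\{\infty\}$ together with an arc centred at $i{+}1$, and a short computation confirms these arcs are disjoint.

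The genuine gap is the odd case. You correctly diagnose that the naive concatenation of the near-$1$-factors $N_i$ (ordered by distance) fails by exactly one vertex at each junction, and you correctly reformulate what is needed: an ordering of the factors and of the edges within each factor so that the position inequalities hold across every junction. But then you write ``one then has to choose a suitable near-$1$-factorization \dots\ Once such a scheme is produced and verified, the lower and upper bounds coincide and the proof is complete.'' That sentence \emph{is} the missing proof. Producing and verifying such a scheme is exactly the content of Alspach's argument (via the Walecki construction), and it is not a routine matter: the internal orderings of adjacent factors interact, and a local fix at one junction can break the next. Handling $n=5$ by a Hamilton path in the Petersen graph is a nice remark, but it does not suggest a pattern for general $m$.

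In short: the proposal is an outline, not a proof. The even case and the upper bound are done; for odd $n$ you still owe an explicit ordering of $K_{2m+1}$ together with a verification that every window of length $m$ is a matching.
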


Katona~\cite{MR2181045} implicitly considered
the {\em cyclic matching sequencibility} $cms(G)$ of a graph $G$
which is the natural analogue of the matching sequencibility for $G$
when cyclic orderings are allowed.
Brualdi, Kiernan, Meyer and Schroeder~\cite{MR2961987}
defined this invariant explicitly
and proved the cyclic analogue of Theorem~\ref{thm:Matching sequence}, below,
thus strengthening a weaker result by Katona~\cite{MR2181045}.

\begin{theorem}[Brualdi et al.~\cite{MR2961987}]\label{thm:Cyclic matching sequence}
For an integer $n \geq 4$,
\[
cms(K_n) = \left\lfloor \frac{n-2}{2} \right\rfloor \,.
\]
\end{theorem}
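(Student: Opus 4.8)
The plan is to prove the two inequalities $cms(K_n)\le\lfloor(n-2)/2\rfloor$ and $cms(K_n)\ge\lfloor(n-2)/2\rfloor$ separately; throughout, write $m=\binom n2$ for the number of edges of $K_n$. For the upper bound, observe first that cutting a cyclic ordering at any point yields a linear ordering with the same window parameter, so $cms(K_n)\le ms(K_n)=\lfloor(n-1)/2\rfloor$ by Theorem~\ref{thm:Matching sequence}; when $n$ is even this already equals $\lfloor(n-2)/2\rfloor$ and we are done. So the substantive part of the upper bound is the case $n$ odd, where one must show that no cyclic ordering $e_0,e_1,\dots,e_{m-1}$ (indices mod~$m$) can have the property that every $s:=(n-1)/2$ consecutive edges form a matching.

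Suppose such an ordering existed. Since a matching of $s$ edges covers exactly $2s=n-1$ vertices, each window $W_i=\{e_i,\dots,e_{i+s-1}\}$ misses exactly one vertex $\mu_i$. Comparing $W_i$ with $W_{i+1}=(W_i\setminus\{e_i\})\cup\{e_{i+s}\}$: the matching $W_i\setminus\{e_i\}$ covers every vertex except $\mu_i$ and the two ends of $e_i$, so $e_{i+s}$ must cover two of these three vertices, and since $e_{i+s}\ne e_i$ in the simple graph $K_n$ it follows that $e_{i+s}$ consists of $\mu_i$ together with one end of $e_i$; hence $\mu_i\in e_{i+s}$, $\mu_{i+1}\in e_i$, and $|e_i\cap e_{i+s}|=1$. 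Since $s$ divides $m$, the indices split into $s$ cyclic classes modulo $s$, each of size $n$, and within each class the $n$ edges $e_r,e_{r+s},e_{r+2s},\dots$ form a closed walk traversing each of them exactly once --- equivalently, they span a connected subgraph of $K_n$ in which every degree is even. I would then derive a contradiction by exploiting the rigidity this imposes: a short gap count shows that each vertex is missed by exactly $s$ windows and that the $n-1$ edges at any vertex partition the cyclic sequence into exactly $s$ gaps of size $s$ and $s$ gaps of size $s-1$; moreover every window selects exactly one edge from each of the $s$ closed walks, and $\mu_i\ne\mu_{i+1}$ for all $i$. Forcing all of these to hold simultaneously is impossible --- for $n=5$ this is precisely the non-Hamiltonicity of the Petersen graph --- and carrying out this last argument in general is the main obstacle.

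For the lower bound I would exhibit an explicit cyclic ordering attaining window size $\lfloor(n-2)/2\rfloor$. When $n$ is even, start from the rotational $1$-factorization of $K_n$ on the vertex set $\mathbb{Z}_{n-1}\cup\{\infty\}$ with factors $M_j=\{\{j,\infty\}\}\cup\{\{j+i,j-i\}:1\le i\le n/2-1\}$, concatenate $M_0,M_1,\dots,M_{n-2}$ cyclically, and order the edges inside each $M_j$ ``radially'' as $\{j,\infty\},\{j+1,j-1\},\{j+2,j-2\},\dots$. Any window of $n/2-1$ edges lying inside a single factor is automatically a matching, and a short (though tight) computation shows that a window straddling a boundary $M_j\mid M_{j+1}$ --- a suffix of $M_j$ followed by a prefix of $M_{j+1}$, covering at most $n-2$ vertices in all --- is vertex-disjoint under this ordering. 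When $n$ is odd the same construction applied to the near-$1$-factorization of $K_n$ into $n$ near-perfect matchings of size $(n-1)/2$ works, and there the straddling windows have only $(n-3)/2$ edges, so disjointness is even easier to guarantee. This gives $cms(K_n)\ge\lfloor(n-2)/2\rfloor$, which together with the upper bound yields $cms(K_n)=\lfloor(n-2)/2\rfloor$.
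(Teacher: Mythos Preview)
This theorem is not proved in the present paper; it is quoted from Brualdi, Kiernan, Meyer and Schroeder~\cite{MR2961987} as background, so there is no in-paper proof to compare your proposal against. I can therefore only assess your argument on its own terms.

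Your lower-bound construction via the rotational (near-)1-factorization with the radial edge order inside each factor is the natural approach and, once the boundary case is checked carefully, does give $cms(K_n)\ge\lfloor(n-2)/2\rfloor$; this is essentially the route taken in~\cite{MR2961987}. Likewise, the reduction of the upper bound to the odd case via $cms\le ms$ and Theorem~\ref{thm:Matching sequence} is fine.

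The genuine gap is exactly where you flag it: for odd $n$ you set up good structural constraints (each window misses a unique vertex, $\mu_i\in e_{i+s}$, $|e_i\cap e_{i+s}|=1$, the $s$ residue classes modulo $s$ form closed Euler subgraphs, each vertex is missed by exactly $s$ windows) but you do not actually derive a contradiction from them. Saying that ``forcing all of these to hold simultaneously is impossible'' and that the $n=5$ case ``is precisely the non-Hamiltonicity of the Petersen graph'' is suggestive but not a proof; the Petersen observation does not point to any general mechanism, and nothing in your list of constraints is by itself inconsistent for arbitrary odd $n$. Until you supply an argument that rules out such cyclic orderings for all odd $n\ge5$ --- for instance a counting or parity obstruction that works uniformly in $n$ --- the upper bound, and hence the theorem, remains unproved in your write-up.
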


Let $K_{n,m}$ be the complete bipartite graph with
parts of cardinality $n$ and $m$.
Brualdi et al.~\cite{MR2961987} also found the matching and cyclic matching
sequencibility of complete bipartite graphs, as follows.
\begin{theorem}\label{thm: bipartite}
For integers $n$ and $m$ with $2 \leq n \leq m$,
\[
ms(K_{n,m}) = cms(K_{n,m}) =
\begin{cases}
n &\textrm{ if } n < m\,; \\
n-1 &\textrm{ if } n = m \,.
\end{cases}
\]
\end{theorem}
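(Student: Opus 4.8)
The plan is to establish each of the two claimed equalities by sandwiching $ms(K_{n,m})$ and $cms(K_{n,m})$ between matching upper and lower bounds, using throughout the general fact that $cms(G)\le ms(G)$ for every graph $G$ (cutting a cyclic edge-ordering at any point yields a linear ordering with the same window property). It therefore suffices to prove: in the case $n<m$, that $ms(K_{n,m})\le n$ and $cms(K_{n,m})\ge n$; and in the case $n=m$, that $ms(K_{n,n})\le n-1$ and $cms(K_{n,n})\ge n-1$.

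For the upper bounds I would argue as follows. A matching in $K_{n,m}$ has at most $\min(n,m)=n$ edges, so no $n+1$ edges — in particular no $n+1$ consecutive edges of any ordering — can form a matching; hence $ms(K_{n,m})\le n$ in all cases. When $n=m$ I sharpen this: if some linear ordering $e_1,e_2,\ldots$ of $E(K_{n,n})$ had every $n$ consecutive edges forming a matching, each such block would be a \emph{perfect} matching, and comparing $\{e_1,\ldots,e_n\}$ with $\{e_2,\ldots,e_{n+1}\}$ forces $e_{n+1}$ to cover exactly the $A$-vertex and the $B$-vertex left uncovered by $\{e_2,\ldots,e_n\}$, namely those of $e_1$; thus $e_{n+1}=e_1$, contradicting distinctness of edges. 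So $ms(K_{n,n})\le n-1$.

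For the lower bounds I would exhibit explicit cyclic orderings. Write the parts as $A=\{a_0,\ldots,a_{n-1}\}$ and $B=\{b_0,\ldots,b_{m-1}\}$, indices read modulo $n$ and modulo $m$ respectively. When $n<m$, set $e_{sn+i}=a_i b_{i-s}$ for $0\le i\le n-1$, $0\le s\le m-1$, read cyclically in the index $sn+i$ modulo $nm$. One checks that each edge of $K_{n,m}$ is listed exactly once (for fixed $i$, the map $s\mapsto i-s$ is a bijection of $\mathbb{Z}_m$); that each block $\{e_{sn},\ldots,e_{sn+n-1}\}$ is a matching; and that a window straddling consecutive diagonals $s$ and $s+1$ — the last $p$ edges of diagonal $s$ followed by the first $q=n-p$ of diagonal $s+1$ — has left-endpoint set $\{a_q,\ldots,a_{n-1}\}\cup\{a_0,\ldots,a_{q-1}\}=A$, and right-endpoint indices forming $n+1$ consecutive residues mod $m$ with one interior residue removed, which are distinct since $n<m$. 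Hence $cms(K_{n,m})\ge n$. When $n=m$, I would instead take $e_{sn+i}=a_i b_{i+s}$ with indices mod $n$, read cyclically; again every edge appears once, and every window of $n-1$ consecutive edges has left-endpoints missing exactly one vertex of $A$ and right-endpoint indices forming $n-1$ consecutive residues mod $n$, so the window is a matching and $cms(K_{n,n})\ge n-1$.

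The main obstacle is the seam analysis in the $n<m$ construction: concatenating the diagonal perfect matchings of $A$ in their natural cyclic order \emph{fails}, because two right-endpoints coincide at each seam, and the remedy is exactly the $b_{i-s}$ shift, which converts an overlap of the two index-ranges into a gap of one. Checking that this single shift simultaneously keeps all edges distinct, keeps every within-diagonal block a matching, and repairs every seam is the technical heart of the argument; the $n=m$ case is then the variant in which the shorter window length $n-1$ itself supplies the needed gap, so no shift between consecutive diagonals is needed.
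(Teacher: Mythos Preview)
Your proof is correct. The upper-bound arguments are clean, and I verified both explicit cyclic orderings: in the $n<m$ case the right-endpoint indices of a straddling window are (after a shift) $\{0,\ldots,n\}\setminus\{q\}$, distinct mod $m$ since $n+1\le m$; in the $n=m$ case they form $n-1$ consecutive residues mod $n$. The wrap-around seam $s=m-1\to s=0$ is covered by the same calculation since $s$ lives in $\mathbb{Z}_m$.

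The paper does not prove this theorem directly; it is quoted from Brualdi et al.\ and then recovered as the $k=2$, $\lambda=1$, $r=1$ case of the main Theorem~\ref{thm: hypergraph gen r}. That route goes through the general matching-decomposition machinery (Propositions~\ref{prop: Matching decomposition both} and~\ref{prop: Matching decomposition}, Claim~\ref{claim: matching decomp gen case}, Lemma~\ref{lem: cyclic $k$-partite gcd not 1}) and the necessary-condition arguments of Section~\ref{sec: outline and reduction of thm}. Your diagonal orderings are in fact the $k=2$ specialisation of the paper's matchings $\mathcal{M}_{i,j}$ and $\mathcal{M}'_{i,j}$, and your seam analysis is the bipartite instance of Lemmas~\ref{lem: cyclic $k$-partite gcd not 1} and~\ref{lem: seqity hypergraph one min}; but you carry it out by hand rather than through the abstract base-$\overline{m}$ framework. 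What your approach buys is a short self-contained argument requiring no auxiliary lemmas; what the paper's approach buys is uniformity across all $k$, $\lambda$, and $r$ simultaneously. Your upper-bound argument for $n=m$ (forcing $e_{n+1}=e_1$) is also the bipartite specialisation of Claim~\ref{claim: hypergraph contr}, though again yours is more direct.
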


The aim of this paper is to generalise
Theorem~\ref{thm: bipartite} considerably with respect to
a more general notion of matching sequencibility and a more general notion of graphs.
It turns out that the resulting invariants may be given surprisingly precise and somewhat elegant descriptions; see Theorem~\ref{thm: hypergraph gen r} below.
We will consider the following generalisation of matching sequencibility
given in~\cite{MR3761920}.
For a graph $G$, $ms_r(G)$ denotes the analogue
of $ms(G)$ where consecutive edges are required to form
a graph with maximal vertex degree at most $r$.
Similarly, $cms_r(G)$ is defined in analogy to $ms_r(G)$
where we allow cyclic orderings of $G$'s edges.
A hypergraph $\mathcal{H}$ is a pair $(V,E)$
where $V$ is a set
and   $E$ is a multiset of subsets of~$V$.
The complete $k$-partite $k$-graph with parts of cardinalities $n_1, \ldots , n_k$,
denoted $\mathcal{K}_{n_1, \ldots, n_k}$, is the hypergraph
whose vertex set is the union of disjoint sets $N_1, \ldots, N_k$ of cardinalities
$n_1, \ldots , n_k$, respectively, and whose edge set is
the family of every $k$-set containing exactly one member of $N_1,\ldots,N_k$, respectively.
For a hypergraph $\mathcal{H} = (V,E)$, we let $ms_r(\mathcal{H})$ and $cms_r(\mathcal{H})$
denote the natural analogues of $ms_r(G)$ and $cms_r(G)$ for hypergraphs, respectively.
Furthermore, for any positive integer $\lambda$,
let $\lambda \mathcal{H}$ be the hypergraph $\mathcal{H}' = (V,E')$
where $E'$ contains $\lambda$ distinct copies of $e$ for each $e\in E$.
For $r\geq\Delta(\mathcal{H})$, the maximal vertex degree of~$\mathcal{H}$,
these invariants trivially equal $|E(\mathcal{H})|$.
We will extend the above definitions of $ms_r(\mathcal{H})$ and $cms_r(\mathcal{H})$ for
$r < \Delta(\mathcal{H})$ to non-trivial definitions of these invariants for $r\geq 1$.
However, the details are technical and will be given later, in Subsection~\ref{subsec:Nontriv defs}.

The main result of this paper is the following theorem,
which succeeds, perhaps surprisingly, to precisely describe the values of
 $ms_r(\lambda\mathcal{K}_{n_1,\ldots, n_k})$ and
$cms_r(\lambda\mathcal{K}_{n_1,\ldots, n_k})$.

\begin{theorem} \label{thm: hypergraph gen r}
Let
$1 \leq n_1 = n_2 = \cdots = n_u < n_{u+1} \leq \cdots  \leq n_k $
and $r = r_1 \lambda \prod_{i=2}^{k} n_i +r_2$,
for non-negative integers $r_1,r_2$ with $0 \leq r_2 \leq \lambda\prod_{i=2}^{k} n_i -1 $.
Then
\[
ms_r(\lambda\mathcal{K}_{n_1,\ldots,n_k}) =
\begin{cases}
rn_1 &\text{ if } n_1^{u-1}\mid r_2
\text{ or } (\ref{eqn: cond hypergraph comp}) \text{, below, holds}\,; \\
rn_1-1 &\text{ otherwise}\,,
\end{cases}
\]
and
\[
\hspace{-97pt} cms_r(\lambda\mathcal{K}_{n_1,\ldots,n_k}) =
\begin{cases}
rn_1 &\text{ if } n_1^{u-1}\mid r_2;
\\
rn_1-1 &\text{ otherwise}\,,
\end{cases}
\]
where
\begin{equation}\label{eqn: cond hypergraph comp}
\left(\left\lfloor \frac{r_2}{n_1^{u-1}}\right\rfloor +1 \right)
\left\lfloor \frac{\lambda}{r_2}\prod_{i=2}^{k} n_i \right\rfloor
\leq  \lambda\prod_{i=u+1}^{k}n_i
\leq \left\lfloor \frac{r_2}{n_1^{u-1}} \right\rfloor
\left(\left\lfloor \frac{\lambda}{r_2}\prod_{i=2}^{k} n_i \right\rfloor +1 \right)\,.
\end{equation}
\end{theorem}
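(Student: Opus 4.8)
The plan is to prove matching upper and lower bounds in each of the four regimes of the statement. Throughout, write $\Delta = \lambda\prod_{i=2}^{k}n_i = n_1^{u-1}m$ for the maximum degree of $\lambda\mathcal{K}_{n_1,\ldots,n_k}$, where $m = \lambda\prod_{i=u+1}^{k}n_i$, and encode each edge as a pair $(a,b)$ in which $a=(a_1,\ldots,a_u)\in N_1\times\cdots\times N_u$ records the trace on the small equal-sized parts and $b$ records the remaining $\lambda\prod_{i=u+1}^{k}n_i = m$ choices; so there are $n_1^{u}$ possible traces, each carried by exactly $m$ of the $n_1^{u}m$ edges. A first reduction, using the non-trivial definitions of Subsection~\ref{subsec:Nontriv defs}, lets one peel off the bookkeeping term $r_1\Delta$ and reduce the case $r\geq\Delta$ (that is, $r_1\geq 1$) to the case $r<\Delta$, since $rn_1 = r_1|E| + r_2n_1$ and $|E| = n_1\Delta$; after this reduction it suffices to study orderings of the $n_1^{u}m$ edges in which every window of $s$ consecutive edges induces a sub-$k$-graph of maximum degree at most $r$, the binding constraints being those at the vertices of $N_1\cup\cdots\cup N_u$.

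For the upper bounds, $ms_r,cms_r\leq rn_1$ is immediate: fixing a part $N_j$ with $j\leq u$ and summing the degrees of its $n_1$ vertices inside a window of length $s$ gives total $s$, so some vertex of $N_j$ has degree at least $\lceil s/n_1\rceil$, exceeding $r$ once $s>rn_1$. The refined bound $ms_r\leq rn_1-1$ (and the cyclic analogue) in the ``otherwise'' cases is the crux. Suppose some ordering had every window of length exactly $rn_1$ good; then by the same count, in every such window each vertex of each $N_j$ with $j\leq u$ has degree \emph{exactly} $r$, forcing the edge leaving and the edge entering the window to agree in all $u$ trace-coordinates. Hence the trace sequence is periodic with period $rn_1$: each residue class modulo $rn_1$ carries a single trace, each trace receives exactly $m$ edges, and within one period each coordinate realises each value of $N_j$ exactly $r$ times. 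Turning these constraints into arithmetic — the class sizes are $\lfloor n_1^{u}m/(rn_1)\rfloor = \lfloor \Delta/r_2\rfloor$ or one more, and $m$ must be a coordinate-balanced sum of such class sizes — produces precisely the negations of $n_1^{u-1}\mid r_2$ and of~(\ref{eqn: cond hypergraph comp}); in the cyclic setting the two boundary classes disappear and only $n_1^{u-1}\mid r_2$ survives, explaining why~(\ref{eqn: cond hypergraph comp}) does not enter the formula for $cms_r$.

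For the lower bounds one exhibits the relevant orderings. When $n_1^{u-1}\mid r_2$, the periodic template from the rigidity analysis can be realised directly: writing $r_2 = n_1^{u-1}q$, lay the edges out so that traces repeat with period $rn_1 = qn_1^{u}$ under a round-robin (Latin-square type) schedule on each coordinate making every length-$rn_1$ window coordinate-balanced, and interleave the $m$ copies of each trace so that the shorter windows inherit the degree bound; Theorem~\ref{thm: bipartite} (the case $u=k=2$, $\lambda=1$) is the base case, and the general construction is assembled from it by a blow-up/product argument. When $n_1^{u-1}\nmid r_2$ but~(\ref{eqn: cond hypergraph comp}) holds, the trace sequence cannot be kept exactly periodic; instead one must spread the $q$ and $q+1$ loads implied by $\lfloor r_2/n_1^{u-1}\rfloor$ across the $t=\lfloor\Delta/r_2\rfloor$ and $t+1$ available slots, and~(\ref{eqn: cond hypergraph comp}) is exactly the feasibility inequality $(q+1)t\leq m\leq q(t+1)$ for packing the $m$ copies of each trace into those slots. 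Checking that such a packing can be made simultaneously coordinate-balanced across all $u$ small parts — together with the matching construction achieving $rn_1$ — is the part I expect to be the main obstacle. In the last case, listing the edges in lexicographic order of trace (with the $m$ copies of each trace blocked together) is verified to have every window of length $rn_1-1$ good, completing the proof; the cyclic constructions run in parallel and are in fact simpler, since one may wrap around freely.
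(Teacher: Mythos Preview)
Your upper-bound outline matches the paper's: the pigeonhole gives $\leq rn_1$, and the rigidity step (edges $rn_1$ apart must share their trace on $N_1,\ldots,N_u$) is Claim~\ref{claim: hypergraph contr}, from which Lemmas~\ref{lem: hypergraph necessary condition} and~\ref{lem: hypergraph cyclic necessary condition} extract the arithmetic conditions just as you sketch. The lower bounds, however, contain a concrete error. Your construction for the floor $rn_1-1$ --- lexicographic order of trace with the $m$ copies of each trace blocked together --- fails already for $\mathcal{K}_{2,2,3}$ with $\lambda=1$ and $r=3$ (here $u=2$, $m=3$, and we are in the ``otherwise'' case with target $rn_1-1=5$): the first two traces $(0,0)$ and $(0,1)$ in lex order share their $N_1$-coordinate, so the first $2m=6$ edges all meet vertex $0\in N_1$, and the window of length $5$ beginning at position $0$ gives that vertex degree $5>r=3$. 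In general the first $n_1^{u-1}$ traces under lex order agree on $N_1$, so blocking by trace pins $ms_r$ of this ordering down to at most $r$, not $rn_1-1$. Without a working $rn_1-1$ construction you also lose the cyclic lower bound whenever $n_1^{u-1}\nmid r_2$; and your two $rn_1$ constructions remain schematic, the harder of which you yourself flag as ``the main obstacle''.

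The paper's lower-bound machinery is quite different and is where the real work lies. It does not order edges directly but builds explicit matching decompositions of $\mathcal{K}_{n_1,\ldots,n_k}$ indexed by a mixed-radix representation (Lemma~\ref{lem: rep of int prop}, Claim~\ref{claim: matching decomp gen case}) and then applies Propositions~\ref{prop: Matching decomposition both} and~\ref{prop: Matching decomposition}, which convert a local compatibility $ms(\ell_{i,j},\ell_{i,j+1})\geq n_1-x$ between consecutively indexed matchings into a global bound $cms_r\geq rn_1-x$. The uniform bound $rn_1-1$ is obtained this way with $x=1$ (Section~\ref{sec: Proof of the inequalities}); the $rn_1$ cases use a second, finer decomposition (Claim~\ref{claim: decomp for N'}, Lemma~\ref{lem: seqity hypergraph one min}) together with a bespoke re-indexing $\rho$ (Lemma~\ref{lem: properties of difficult re-index non-cyclic case}) whose four-case definition is exactly the packing you allude to but do not carry out. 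Note also that the degree constraints at vertices of $N_{u+1},\ldots,N_k$ are not automatically slack --- your remark that the $N_1,\ldots,N_u$ constraints are the binding ones is correct for the \emph{upper} bound but not for an arbitrary ordering --- and the paper checks them explicitly in Lemma~\ref{lem: seqity hypergraph one min}.
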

Theorem~\ref{thm: hypergraph gen r} includes Theorem~\ref{thm: bipartite} as a special case,
which is more evident from Theorem~\ref{thm: hypergraph gen r} when $r=1$, given below.
\begin{corollary}
Let $n_1 \leq n_2 \leq \cdots \leq n_k$. Then
\[
ms(\lambda\mathcal{K}_{n_1,\ldots,n_k}) = cms(\lambda\mathcal{K}_{n_1,\ldots,n_k})=
\begin{cases}
rn_1 &\text{ if } n_1 < n_2 \,; \\
rn_1-1 &\text{ otherwise}\,.
\end{cases}
\]
\end{corollary}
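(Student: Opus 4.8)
The plan is to read off the corollary from Theorem~\ref{thm: hypergraph gen r} by specialising to $r = 1$. First I would put $N := \lambda\prod_{i=2}^{k} n_i$; assuming $n_1 \ge 2$ (when $n_1 = 1$ every edge contains the unique vertex of $N_1$, so no two edges are disjoint and the invariants are computed directly) and $N \ge 2$, the constraints $r = r_1 N + r_2$ and $0 \le r_2 \le N - 1$ with $r = 1$ force $r_1 = 0$ and $r_2 = 1$; in particular $rn_1 = n_1$.

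Next, with $r_2 = 1$ the divisibility hypothesis $n_1^{u-1} \mid r_2$ appearing in Theorem~\ref{thm: hypergraph gen r} becomes $n_1^{u-1} = 1$, which for $n_1 \ge 2$ is equivalent to $u = 1$, i.e.\ to $n_1 < n_2$. This at once gives the stated formula for $cms$, since the $cms$-value in Theorem~\ref{thm: hypergraph gen r} depends only on this condition. For $ms$ it remains to check that condition~\eqref{eqn: cond hypergraph comp} is vacuous once $r = 1$: if $u \ge 2$ then (as $n_1 \ge 2$) $\lfloor r_2/n_1^{u-1}\rfloor = \lfloor 1/n_1^{u-1}\rfloor = 0$, so the right-hand inequality of~\eqref{eqn: cond hypergraph comp} would force $\lambda\prod_{i=u+1}^{k} n_i \le 0$, which is impossible. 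Hence the two branches of the $ms$-formula in Theorem~\ref{thm: hypergraph gen r} collapse exactly as those of the $cms$-formula, and the corollary follows; taking $k = 2$ and $\lambda = 1$ then recovers Theorem~\ref{thm: bipartite}.

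Since everything is a direct specialisation, there is no genuine obstacle: the only point not settled by pure substitution is the short computation showing that condition~\eqref{eqn: cond hypergraph comp} cannot hold when $r = 1$.
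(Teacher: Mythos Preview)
Your proposal is correct and matches the paper's approach: the paper does not give a separate proof but simply presents the corollary as the specialisation of Theorem~\ref{thm: hypergraph gen r} to $r=1$, and your argument carries out exactly that specialisation (including the verification that condition~\eqref{eqn: cond hypergraph comp} cannot hold when $r_2=1$ and $u\ge 2$). Your side remark on the degenerate case $n_1=1$ is a reasonable caveat; note that in that case $n_1^{u-1}=1$ divides $r_2$ automatically, so Theorem~\ref{thm: hypergraph gen r} gives the value $rn_1$ regardless of whether $n_1<n_2$.
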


Section~\ref{sec:preliminaries} contains definitions and auxiliary results. The rest of the paper is mostly dedicated to proving Theorem~\ref{thm: hypergraph gen r}.
The proof of Theorem~\ref{thm: hypergraph gen r} is
divided into three technical sections and a concluding section,
namely, Sections~\ref{sec: outline and reduction of thm}-\ref{sec: proof conclusion}.
Section~\ref{sec: Con} concludes the paper with examples of interest to
the auxiliary results in Section~\ref{sec:preliminaries}
as well as a conjecture on the value of $ms(K_{s(n)})$ and $cms(K_{s(n)})$
for complete multi-partite graphs $K_{s(n)}$.

\section{Preliminary definitions and auxiliary results}
\label{sec:preliminaries}
For technical reasons we will, contrary to the introduction, define hypergraphs without the use of ``multisets" in the following manner.
A hypergraph $\mathcal{H} = (V,E)$ is a pair consisting of two sets,
the set of {\em vertices} $V$ of $\mathcal{H}$ and
the set of {\em edges} $E$ of $\mathcal{H}$,
where each edge $e\in E$ has associated to it a prescribed set of vertices.
Each such associated vertex $v\in V$ is said to be {\em incident} with $e\in E$
and this is denoted by $v\in e$.
Here, the two distinct edges $e,e'\in E$ can be incident with the same set of vertices,
in which case $e$ and $e'$ are {\em parallel}.
We can thus view the edges of a hypergraph as a family of distinctly labelled sets comprising not necessarily distinct collections of vertices.

For an integer $n$, let $[n]:= \{0,1, \ldots , n-1 \}$.
An {\em ordering} or {\em labelling}
of a hypergraph $\mathcal{H} = (V,E)$ is
a bijective function $\ell : E \rightarrow [|E|]$.
The image of $e$ under $\ell$ is called the {\em label} of $e$.
A sequence of edges $e_0, \ldots, e_{s-1}$ is {\em consecutive} in
$\ell$ if the labels of $e_0, \ldots, e_{s-1}$ are consecutive integers,
respectively.
For a sequence $S$ of edges,
define $\mathcal{H}(S)$ to be the hypergraph
whose edges are those in the sequence $S$ and whose vertices are the
vertices incident with these edges.

For an ordering $\ell$ of a hypergraph $\mathcal{H}$,
let $ms_r(\ell)$ denote the maximum integer $s$ such that,
for every sequence $S$ of $s$ consecutive edges of $\ell$,
$\Delta(\mathcal{H}(S))\leq r$.
Define the {\em $r$-matching sequencibility} of $\mathcal{H}$,
denoted by $ms_r(\mathcal{H})$,
to be the maximum value of $ms_r(\ell)$ over all orderings $\ell$ of $\mathcal{H}$.
In particular, the special case $ms_1(\mathcal{H})$,
which we denote as $ms(\mathcal{H})$,
is the same invariant as presented in the Introduction.

A sequence of edges $e_0, \ldots, e_{s-1}$ of
a hypergraph $\mathcal{H} = (V,E)$ is {\em cyclically consecutive} in $\ell$
if the labels of $e_0, \ldots, e_{s-1}$ are consecutive integers modulo $|E|$, respectively.
We define $cms_r(\ell)$ and $cms_r(\mathcal{H})$ analogously to
$ms_r(\ell)$ and $ms_r(\mathcal{H})$, respectively,
where we now consider sequences of cyclically consecutive edges.
We first consider cases when $r < \Delta(\mathcal{H})$,
as the cases when $r \geq \Delta(\mathcal{H})$ are
somewhat different and will be dealt with in Subsection~\ref{subsec:Nontriv defs}.
The following lemma was presented in~\cite{MR3761920} and we shall give a proof for completeness.

\begin{lemma}\label{lem: 1 to r}
For a hypergraph $\mathcal{H}$ with ordering $\ell$
and integers $r_1,r_2$ with $r_1 r_2 < \Delta(\mathcal{H})$,
\[
r_2 \; ms_{r_1}(\mathcal{H}) \leq ms_{r_1 r_2} (\mathcal{H}) \,
\quad
\text{ and }
\quad
 r_2 \; cms_{r_1}(\mathcal{H}) \leq  cms_{r_1 r_2} (\mathcal{H})  \,.
\]
\end{lemma}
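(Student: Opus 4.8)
The plan is to start from an optimal ordering $\ell$ witnessing $ms_{r_1}(\mathcal{H}) = s$, so that every $s$ consecutive edges of $\ell$ induce a subhypergraph of maximal degree at most $r_1$, and to show directly that the \emph{same} ordering $\ell$ already witnesses $ms_{r_1 r_2}(\ell) \ge r_2 s$. To do this, take any window $S$ of $r_2 s$ consecutive edges in $\ell$ and split it into $r_2$ consecutive sub-windows $S_1, \ldots, S_{r_2}$, each of length $s$ (the first $r_2 - 1$ of length exactly $s$ and, in the worst case, the last of length at most $s$). For each vertex $v$, its degree in $\mathcal{H}(S)$ is the sum of its degrees in $\mathcal{H}(S_1), \ldots, \mathcal{H}(S_{r_2})$, and each of these is at most $r_1$ by the choice of $\ell$; hence $\deg_{\mathcal{H}(S)}(v) \le r_2 r_1$. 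Thus $\Delta(\mathcal{H}(S)) \le r_1 r_2$ for every such window, giving $ms_{r_1 r_2}(\ell) \ge r_2 s = r_2\, ms_{r_1}(\mathcal{H})$, and therefore $ms_{r_1 r_2}(\mathcal{H}) \ge r_2\, ms_{r_1}(\mathcal{H})$.

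For the cyclic statement the argument is identical: an optimal cyclic ordering for $cms_{r_1}(\mathcal{H})$ has every $s$ cyclically consecutive edges inducing maximal degree at most $r_1$, and any $r_2 s$ cyclically consecutive edges decompose into $r_2$ blocks of $s$ cyclically consecutive edges, so the same degree-summation bound applies. One should note that the hypothesis $r_1 r_2 < \Delta(\mathcal{H})$ is exactly what keeps both sides in the regime where the non-trivial definitions of Section~\ref{subsec:Nontriv defs} are not yet in force, so no separate case analysis is needed; it also guarantees $s = ms_{r_1}(\mathcal{H})$ is a genuine (finite, sub-$|E|$) quantity so that the windows in question actually exist.

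The only mild subtlety — and the closest thing to an obstacle — is bookkeeping at the boundary: a window of $r_2 s$ consecutive edges need not partition into $r_2$ blocks of size exactly $s$ if $r_2 s$ exceeds what is available, but since we are only claiming a lower bound $ms_{r_1 r_2}(\ell)\ge r_2 s$, it suffices that \emph{every} window of length \emph{exactly} $r_2 s$ (when such windows exist) has the degree property, and each such window does split cleanly into $r_2$ length-$s$ blocks; windows shorter than $r_2 s$ are irrelevant to the value $ms_{r_1 r_2}(\ell)$. Hence the decomposition is always exact where it matters, and the proof reduces to the one-line degree inequality $\deg_{\mathcal{H}(S)}(v) = \sum_{j=1}^{r_2}\deg_{\mathcal{H}(S_j)}(v) \le r_2 r_1$.
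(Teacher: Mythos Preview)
Your proposal is correct and is essentially the same argument as the paper's: take an optimal ordering for the $r_1$-invariant, split any window of length $r_2\,s$ into $r_2$ consecutive blocks of length $s$, and sum the per-block degree bounds to get $\Delta(\mathcal{H}(S))\le r_1 r_2$. The paper writes out the cyclic case and declares the non-cyclic one similar, while you do the reverse, but the content is identical; your added remarks on the role of the hypothesis $r_1 r_2 < \Delta(\mathcal{H})$ and on the exactness of the block decomposition are fine (indeed, since every window you consider has length exactly $r_2 s$, the parenthetical ``at most $s$'' in your first paragraph is unnecessary).
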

\begin{proof}

Let $\ell$ be a labelling of $\mathcal{H}$ such that
$cms_{r_1}(\ell) = cms_{r_1}(\mathcal{H})$.
Any sequence $S$ of $r_2 \; cms_{r_1}(\ell)$
cyclically consecutive edges of $\ell$
consists of $r_2$ subsequences of $ cms_{r_1}(\ell)$
cyclically consecutive edges of $\ell$ and each subsequence
forms a hypergraph for which every vertex has degree at most~$r_1$.
Thus, every vertex has degree at most $r_1 r_2$ in $\mathcal{H}(S)$.
Hence,
\[
cms_{r_1r_2}(\mathcal{H})
\geq cms_{r_1r_2}(\ell)
\geq r_2 \; cms_{r_1}(\ell)
= r_2 \; cms_{r_1}(\mathcal{H})\,.
\]
The non-cyclic case is similar and, therefore, omitted.
\end{proof}

For edge-disjoint hypergraphs $\mathcal{H}_0 ,\ldots , \mathcal{H}_{a-1}$
on the same vertex set~$V$,
with labellings $\ell_{0} ,\ldots ,\ell_{a-1}$, respectively,
let $\ell_0 \vee\cdots\vee \ell_{a-1}$ denote
the ordering $\ell$ of $G= \bigl(V ,\bigcup_{i=0}^{a-1} E(\mathcal{H}_i)\bigr)$
defined by
$\ell(e_{i,j}) = \ell_j (e_{i,j}) + \sum_{l=0}^{j-1} |E(\mathcal{H}_l)|$
where $e_{ij} \in E(\mathcal{H}_j)$ for all $i$ and $j$.
Let $s$ be an integer and
$\mathcal{H}$ and $\mathcal{H}'$ be edge-disjoint hypergraphs on the same vertex set $V$,
each having at least $s-1$ edges.
Also, let $\mathcal{H}$ and $\mathcal{H}'$
have labellings $\ell$ and $\ell'$, respectively,
and let $\mathcal{H}_s$ be the subhypergraph of
$\bigl( V,E(\mathcal{H}) \cup E(\mathcal{H}')\bigr)$
that consists of the last $s-1$ edges of $\ell$ and the first $s-1$ edges of $\ell'$.
Then we will let $\ell \vee_s \ell'$ denote the ordering of $\mathcal{H}_s$ for which
the edges of $\mathcal{H}_s$ appear in the same order as they do in $\ell \vee \ell'$.
We now define $ms_r(\ell , \ell')$
to be the largest integer $s$ such that
$ms_r(\ell \vee_{s} \ell') \geq s$.

A {\em matching} of a hypergraph $\mathcal{H}$ is a subhypergraph $\mathcal{M}$
in which every vertex has degree~$1$.
A~{\em matching decomposition} of a hypergraph $\mathcal{H} = (V,E)$ is
a set of matchings of~$\mathcal{H}$ that partition the edge set $E$.
The following proposition, presented in~\cite{MR3761920}, 
gives a lower bound on the $r$-cyclic matching sequencibility,
given that a matching decomposition with certain properties exists.
In the proposition,
the subscripts of the orderings $\ell_i$ are taken modulo $t$:
$\ell_{i+r} = \ell_{i'}$ holds exactly when $i' \equiv i+r \pmod t$.
\begin{proposition}\label{prop: Matching decomposition both}
Let $\mathcal{H}$ be a hypergraph that decomposes into matchings
$\mathcal{M}_0 ,\ldots , \mathcal{M}_{t-1}$,
each with $n$ edges and orderings $\ell_0, \ldots, \ell_{t-1}$, respectively.
Suppose, for some $x \in [n]$ and $r < \Delta(G)$,
that $ms(\ell_i ,\ell_{i+r}) \geq n-x$ for all $i\in [t-r]$.
Then $ms_r(G) \geq rn-x$,
and if $ms(\ell_i ,\ell_{i+r}) \geq n-x$ for all $i\in [t]$,
then $cms_r(G) \geq rn-x$.
\end{proposition}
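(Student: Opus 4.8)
The plan is to exhibit the single ordering $\ell := \ell_0 \vee \ell_1 \vee \cdots \vee \ell_{t-1}$ of $G$ and show that $ms_r(\ell) \geq rn-x$ (respectively $cms_r(\ell) \geq rn-x$ in the cyclic case); since $ms_r(G)\geq ms_r(\ell)$ and $cms_r(G)\geq cms_r(\ell)$, this is enough. So I would fix an arbitrary set $W$ of $rn-x$ consecutive (respectively cyclically consecutive) edges of $\ell$ and bound $\Delta(\mathcal{H}(W))$. In $\ell$ the edges appear as $t$ blocks $\mathcal{M}_0,\ldots,\mathcal{M}_{t-1}$ of $n$ edges each, each block internally ordered by the corresponding $\ell_i$. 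Because $0\leq x\leq n-1$ gives $(r-1)n < rn-x \leq rn < (r+1)n$, a block-counting argument shows $W$ meets exactly $r$ or $r+1$ consecutive blocks (and, in the cyclic case, $r+1\leq t$ since $r<\Delta(G)\leq t$, so these blocks are genuinely distinct). If $W$ meets at most $r$ blocks, then since each block is a matching it contributes at most $1$ to the degree of any vertex, so $\Delta(\mathcal{H}(W))\leq r$ and there is nothing more to do.

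The substantive case is when $W$ meets exactly $r+1$ consecutive blocks $\mathcal{M}_i,\mathcal{M}_{i+1},\ldots,\mathcal{M}_{i+r}$ (indices modulo $t$ in the cyclic case; in the non-cyclic case $i\in[t-r]$, matching the hypothesis). Then consecutiveness forces $W$ to consist of the last $p$ edges of $\mathcal{M}_i$ in $\ell_i$, all of $\mathcal{M}_{i+1},\ldots,\mathcal{M}_{i+r-1}$, and the first $q$ edges of $\mathcal{M}_{i+r}$ in $\ell_{i+r}$, with $p,q\geq 1$ and $p+(r-1)n+q = rn-x$, i.e. $p+q = n-x$ (so in particular $1\leq p\leq n-x-1$). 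Writing $L$ for the $\mathcal{M}_i$-part and $R$ for the $\mathcal{M}_{i+r}$-part, every vertex has degree at most $r-1$ in $\mathcal{M}_{i+1}\cup\cdots\cup\mathcal{M}_{i+r-1}$, so it suffices to show $L\cup R$ is a matching. But the edges of $L\cup R$ are precisely $n-x$ consecutive edges of $\ell_i\vee\ell_{i+r}$ — a length-$p$ suffix of the $\mathcal{M}_i$-block immediately followed by a length-$q$ prefix of the $\mathcal{M}_{i+r}$-block — and, since $p,q\leq n-x-1$, they lie within the edges retained in $\mathcal{H}_{n-x}$ and form exactly one of the windows controlled by $ms_1(\ell_i\vee_{n-x}\ell_{i+r})$. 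As $ms(\ell_i,\ell_{i+r})\geq n-x$ forces $ms_1(\ell_i\vee_{n-x}\ell_{i+r})\geq n-x$, the $(n-x)$-edge set $L\cup R$ is a matching, whence $\Delta(\mathcal{H}(W))\leq (r-1)+1 = r$. The cyclic statement follows verbatim, the only difference being that in the $(r+1)$-block case one needs $ms(\ell_i,\ell_{i+r})\geq n-x$ for all $i\in[t]$ (including wrap-around indices), which is exactly the extra hypothesis assumed there.

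The step that I expect to require the most care is the identification of $L\cup R$ with an honest length-$(n-x)$ window counted by $ms(\ell_i,\ell_{i+r})$: one has to keep the arithmetic $p+q=n-x$ with $p,q\geq 1$ straight so that the window truly straddles the join of $\ell_i\vee\ell_{i+r}$ and sits inside $\mathcal{H}_{n-x}$ (which keeps only the last $n-x-1$ edges of $\ell_i$ and the first $n-x-1$ of $\ell_{i+r}$), and one should note the routine monotonicity that a witness $s\geq n-x$ for $ms_1(\ell_i\vee_s\ell_{i+r})\geq s$ restricts down to give $ms_1(\ell_i\vee_{n-x}\ell_{i+r})\geq n-x$. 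Everything else — the block count, and the degree bookkeeping that each full matching contributes at most $1$ — is immediate.
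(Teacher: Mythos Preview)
Your proposal is correct and follows essentially the same approach as the paper: use the concatenated ordering $\ell = \ell_0\vee\cdots\vee\ell_{t-1}$, observe that any window of $rn-x$ consecutive edges meets either $r$ or $r+1$ consecutive matching blocks, dispose of the $r$-block case trivially, and in the $(r+1)$-block case show that the two partial end-blocks together form a matching via the hypothesis $ms(\ell_i,\ell_{i+r})\geq n-x$. Your treatment is in fact slightly more careful than the paper's about the monotonicity step (that $ms(\ell_i,\ell_{i+r})\geq n-x$ really controls windows of length exactly $n-x$), which the paper invokes without comment.
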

\noindent
The following definitions are used here and throughout the paper.
For a hypergraph~$\mathcal{H}$ with ordering $\ell$,
$S_{\ell}(\mathcal{H})$ denotes the sequence of edges of $\mathcal{H}$ listed in
the same order as~$\ell$,
and $\ell$ {\em corresponds} to $S_{\ell}(\mathcal{H})$;
i.e., if $e_0 ,\ldots, e_{k-1}$ is a sequence of the edges of $\mathcal{H}$,
then~$\ell$ corresponds to that sequence if $\ell(e_i) = i$ for all $i \in [k]$.
We will omit the subscript~$\ell$ if the ordering is clear.
Also, for edge disjoint graphs
$\mathcal{H}_0 , \ldots ,\mathcal{H}_{a-1}$
with labellings $\ell_0 ,\ldots, \ell_{a-1}$, respectively,
one can check that the ordering $\ell = \ell_0 \vee\cdots\vee \ell_{a-1}$
corresponds to sequence
$S_{\ell_0}(\mathcal{H}_0) \vee\cdots\vee S_{\ell_{a-1}}(\mathcal{H}_{a-1})$.
Proposition~\ref{prop: Matching decomposition both} 
was proven for graphs in~\cite{MR3761920}. 
We provide the details for hypergraphs for completeness.

\begin{proof}[Proof of Proposition~\ref{prop: Matching decomposition both}]

We consider only the cyclic case, as the non-cyclic case is similar.
Let~$\ell$ be the ordering corresponding to
$S_{\ell_0}(\mathcal{M}_{0}) \vee\cdots\vee S_{\ell_{t-1}}(\mathcal{M}_{t-1})$.
Consider a sequence $S$ of $rn-x$ consecutive edges of $\ell$.
The sequence $S$ is of the form
\[
  \underbrace{e_1 ,\ldots, e_{j}}_\text{edges in $\mathcal{M}_{i}$},
  S_{\ell_{i+1}}(\mathcal{M}_{i+1}) \vee\cdots\vee S_{\ell_{i+r-a}}(\mathcal{M}_{i+r-a}),
  \underbrace{e_{j+1} ,\ldots, e_{an-x}}_\text{edges in $\mathcal{M}_{i+r+1-a}$} \,,
\]
for some $i \in [t]$, $j \in [n+1]$, and $a$.
If $S$ contains edges from only one matching $\mathcal{M}_l$,
then $S$ is a subsequence of $S_{\ell}(\mathcal{M}_l)$ and $r=1$.
Then we are done, as $\mathcal{H}(S)$ is clearly a matching.
Hence, without loss of generality,
we can assume that $S$ contains edges from each of
$\mathcal{M}_i$ and $\mathcal{M}_{i+r+1-a}$.
Let $S'$ be the sequence of the edges of $S$
which are in either $\mathcal{M}_i$ or $\mathcal{M}_{i+r+1-a}$, in order with respect to~$S$.
There are $0 < an - x \leq 2n$ edges in $S'$.
Therefore, $a = 1$ or $a=2$.

If $a=2$, then $S$ is a subsequence of
$S(\mathcal{M}_{i}) \vee \cdots \vee S(\mathcal{M}_{i+r-1})$,
and, hence, $\Delta(\mathcal{H}(S)) \leq r$.
If $a=1$, then the first $j$ edges and last $n-j-x$ edges of $S$ and thus $S'$
form the sequence of the last $j$ edges of $\ell_{i}$ and the first $n-j-x$ edges of
$\ell_{i+u+1}$, respectively.
Therefore,
the $j + n-j - x = n - x$
edges of $S'$ are consecutive in
$\ell_{i} \vee_{n - x} \ell_{i+r}$.
By assumption, $ms(\ell_i , \ell_{i+r}) \geq n_1-x$, so $\mathcal{H}(S')$ must be a matching.
The edges of $S$ not in $S'$ are from the $r-1$ matchings
$\mathcal{M}_{i+1} , \ldots ,\mathcal{M}_{i+r-1}$.
Thus, $\Delta(\mathcal{H}(S)) \leq r$.
\end{proof}

An ordering of a set $A$ is a bijective function $\sigma : A \rightarrow [|A|]$.
Many of the matching decompositions that we will use henceforth
have a natural indexing which is not directly compatible with
Proposition~\ref{prop: Matching decomposition both}.
In such cases we will find it useful to be able to find
an ordering of the set of indices, with particular properties.
To do this, we will make use of the following lemma, first given in~\cite{MR3761920}.
\begin{lemma}\label{lem: General cyclic ordering}
Let $s<t$ be integers and set $d := \gcd(s,t)$.
Define $a_{i,j} := \left( j\!\!\!\mod{\frac{t}{d}} \right)
                 +\left(i\frac{t}{d}\!\!\mod{t}\right)$
for all integers $i$ and $j$.
Then some ordering $\sigma$ of $[t]$
satisfies
$\sigma(a_{i,j+1}) = (\sigma(a_{i,j})+s) \!\!\mod{t}$
for all $i \in [d]$ and $j \in \bigl[\frac{t}{d}\bigr]$.
\end{lemma}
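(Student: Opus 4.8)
The plan is to write down one such ordering explicitly. Put $m := t/d$. For $i \in [d]$ and $j \in [m]$ one has $j \bmod m = j$ and $im \bmod t = im$ (since $i \le d-1$ forces $im \le t-m < t$), so $a_{i,j} = im + j$; hence the map $(i,j) \mapsto a_{i,j}$ is a bijection from $[d] \times [m]$ onto $[t]$, grouping $[t]$ into $d$ consecutive ``blocks'' of length $m$. Note also that $a_{i,m} = (m \bmod m) + im = im = a_{i,0}$, so the identities to be verified are really cyclic within each block.

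First I would define $\sigma \colon [t] \to [t]$ by $\sigma(a_{i,j}) := (i + js) \bmod t$ for $i \in [d]$, $j \in [m]$; by the bijection just noted this determines $\sigma$ at every point of $[t]$, and it is well defined. Next I would check that $\sigma$ is a bijection. Since $d = \gcd(s,t)$, the element $s$ generates the subgroup $d\mathbb{Z}_t = \{0, d, 2d, \ldots, t-d\}$ of $\mathbb{Z}_t$, which has order $m$, so $\{\,js \bmod t : j \in [m]\,\} = d\mathbb{Z}_t$; thus the image of block $i$ under $\sigma$ is the coset $i + d\mathbb{Z}_t$. As $i$ ranges over $[d]$, a complete residue system modulo $d$, these $d$ cosets are pairwise disjoint and cover $\mathbb{Z}_t = [t]$, so $\sigma$ is indeed bijective.

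Finally I would verify the recurrence. For $j+1 < m$ we simply have $\sigma(a_{i,j+1}) = i + (j+1)s \equiv \bigl(\sigma(a_{i,j}) + s\bigr) \pmod t$. For the wrap-around case $j = m-1$, using $a_{i,m} = a_{i,0}$ together with $ms = (t/d)s = t(s/d) \equiv 0 \pmod t$ gives $\sigma(a_{i,m-1}) + s \equiv i + ms \equiv i = \sigma(a_{i,0}) \pmod t$, as required. The argument is elementary, so I expect no genuine obstacle; the one point needing a little care is the bijectivity of $\sigma$, i.e.\ the coset bookkeeping in $\mathbb{Z}_t$, which is exactly where the hypothesis $d = \gcd(s,t)$ and the particular form of $a_{i,j}$ are used.
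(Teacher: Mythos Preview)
Your proposal is correct and takes essentially the same approach as the paper: you define exactly the same ordering $\sigma(a_{i,j}) = (i + js) \bmod t$ and verify the same recurrence. The only minor difference is that you establish bijectivity via a coset argument in $\mathbb{Z}_t$, whereas the paper does it by a direct elementwise injectivity check; your treatment of the wrap-around case $j = m-1$ is also more explicit than the paper's, but the content is identical.
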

\begin{proof}
We check that the function
$\sigma \,:\, [t] \rightarrow [t]$ defined by
$\sigma(a_{i,j}) = (i + js) \,\,\,\text{modulo}\,\,{t}$
for $i \in [d]$ and $j \in \bigl[\frac{t}{d}\bigr]$ will suffice.
Suppose that $i+ js \equiv i'  + j's \pmod t$
for some $i,i' \in [d]$ and $j,j' \in\bigl[\frac{t}{d}\bigr]$.
Then
$i - i' \equiv (j'-j)s \pmod t$.
As $d$ divides $s$ and $t$, any multiple of $s$ modulo $t$ is also a multiple of $d$.
Thus,  $i-i'$ is a multiple of $d$, while $0 \leq |i-i'| \leq d-1$. This
is only possible if $i=i'$ and so $(j - j')s \equiv 0 \pmod t$.
As
$0 \leq |j' -j| \leq \frac{t}{d}-1$
and $\textrm{lcm}(s,t)= \frac{st}{d}$,
we must also have that $j=j'$.
Thus, $\sigma$ is injective and so bijective; $\sigma$ is thus an ordering of $[t]$.
For any
$i \in [d]$ and $j \in \bigl[\frac{t}{d}\bigr]$,
\[
    \sigma(a_{i,j+1})
  = \left(i+(j+1)s\right) \,\,\,\text{modulo}\,\,{t}
  = \left(\sigma(a_{i,j}) +s\right) \,\,\,\text{modulo}\,\,{t}\,.
\]
Hence, $\sigma$ has the required properties.
\end{proof}
\noindent
The function $\sigma$ in the lemma also satisfies an analogous non-cyclic
property, as follows.
\begin{corollary}\label{cor: General non cyclic ordering}
Let $s<t$ be integers.
Then there exists an ordering $\tau$ of $[t]$
with the property that, if $\tau(a) \leq t-s-1$,
then $\tau(a+1) = \tau(a)+s$.
\end{corollary}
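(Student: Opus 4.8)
The plan is to show that the ordering $\sigma$ already constructed in the proof of Lemma~\ref{lem: General cyclic ordering} can itself be taken as the required $\tau$; this is why the statement is phrased as a corollary of that lemma. Recall from that proof that, with $d := \gcd(s,t)$, the indices $a_{i,j} = i\frac{t}{d} + j$ with $i \in [d]$ and $j \in \bigl[\frac{t}{d}\bigr]$ run over all of $[t]$, partitioned into the consecutive blocks $B_i := \{i\frac{t}{d}, i\frac{t}{d}+1, \dots, (i+1)\frac{t}{d}-1\}$, and that $\sigma(a_{i,j}) = (i + js) \bmod t$.

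I would fix $a \in [t-1]$ with $\sigma(a) \le t-s-1$ and distinguish two cases according to whether $a$ and $a+1$ lie in a common block. If they do, then $a = a_{i,j}$ and $a+1 = a_{i,j+1}$ for the appropriate $i$ and some $j$ with $0 \le j \le \frac{t}{d}-2$, so Lemma~\ref{lem: General cyclic ordering} yields $\sigma(a+1) = (\sigma(a)+s) \bmod t$; since $\sigma(a) + s \le t-1$ by hypothesis, the reduction modulo $t$ has no effect and $\sigma(a+1) = \sigma(a)+s$, as desired. If $a$ and $a+1$ lie in different blocks, then $a$ is the last entry of some $B_i$ with $i \in [d-1]$, that is $a = a_{i,\,t/d-1}$, so $\sigma(a) = \bigl(i + (\tfrac{t}{d}-1)s\bigr) \bmod t$. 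Here I would use that $\tfrac{t}{d}\,s = \operatorname{lcm}(s,t)$ is a multiple of $t$, so $\sigma(a) = (i-s) \bmod t$, and that $d = \gcd(s,t) \le s$, so $-s \le i-s \le -1$ and hence $\sigma(a) = t + i - s \ge t-s > t-s-1$ — contradicting $\sigma(a) \le t-s-1$. Thus this case never occurs under the hypothesis, and in every case $\tau(a+1) = \tau(a)+s$.

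The only real content here — the ``main obstacle,'' such as it is — is the second case: one must notice that at a block boundary the value of $\sigma$ is automatically at least $t-s$, so the non-cyclic hypothesis $\sigma(a) \le t-s-1$ simply rules out block boundaries and leaves only the within-block instances, which are already handled by Lemma~\ref{lem: General cyclic ordering}. Checking this requires only the elementary facts $\tfrac{t}{d}\,s \equiv 0 \pmod t$ and $\gcd(s,t) \le s$ (the latter using $s \ge 1$, as is the case in all our applications); the rest is bookkeeping about which positions lie in which block. Should one prefer to avoid the explicit formula for $\sigma$, an equally short route is to define $\tau$ directly: concatenate, for $j = 0, 1, \dots, s-1$, the chains consisting of the elements of $[t]$ congruent to $j$ modulo $s$ listed in increasing order. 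Each chain is forced internally by the required property, while its last element is $\ge t-s$ and hence imposes no constraint, so the chains may be concatenated in any order.
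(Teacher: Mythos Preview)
Your proposal is correct and follows exactly the paper's intended approach: the paper simply asserts (without details) that the $\sigma$ constructed in the proof of Lemma~\ref{lem: General cyclic ordering} already has the required non-cyclic property, and you have supplied those details correctly. The block-boundary computation showing $\sigma(a_{i,t/d-1}) = t+i-s \ge t-s$ is precisely the missing verification, and your alternative direct construction by concatenating residue-class chains is a perfectly good independent route.
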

\noindent
We use Lemma~\ref{lem: General cyclic ordering} to give an analogous
version of Proposition~\ref{prop: Matching decomposition both} for the cyclic case.
\begin{proposition}\label{prop: Matching decomposition}
Let $\mathcal{H}$ be a hypergraph that decomposes into matchings $\mathcal{M}_{i,j}$,
each with $n$ edges and orderings $\ell_{i,j}$ for $i \in [d]$ and $j \in [c]$, respectively.
Suppose, for some $x \in [n]$ and $r < \Delta(\mathcal{H})$,
that $\gcd(dc,r) = d$ and $ms(\ell_{i,j} ,\ell_{i,j+1}) \geq n-x$
for all $i \in [d]$ and $j \in [c]$.
Then $cms_r(\mathcal{H}) \geq rn-x$.
\end{proposition}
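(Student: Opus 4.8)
The idea is to reduce Proposition~\ref{prop: Matching decomposition} to Proposition~\ref{prop: Matching decomposition both} by reindexing the matchings $\mathcal{M}_{i,j}$ into a single cyclic list $\mathcal{M}_0,\ldots,\mathcal{M}_{t-1}$, where $t = dc$, in such a way that the hypothesis "$ms(\ell_i,\ell_{i+r}) \geq n-x$ for all $i \in [t]$" of Proposition~\ref{prop: Matching decomposition both} becomes exactly the hypothesis "$ms(\ell_{i,j},\ell_{i,j+1}) \geq n-x$ for all $i \in [d]$, $j \in [c]$" that we are given here. Since $\gcd(dc,r) = d$ and we may take $s = r$, $t = dc$ in Lemma~\ref{lem: General cyclic ordering} (after reducing $r$ modulo $t$ if necessary; note $r < \Delta(\mathcal{H}) \leq tn$ so $r$ is meaningful, and the cyclic structure only depends on $r \bmod t$), we obtain an ordering $\sigma$ of $[t]$ with $\sigma(a_{i,j+1}) = (\sigma(a_{i,j}) + r) \bmod t$ for all $i \in [d]$, $j \in [c]$, where $a_{i,j} = (j \bmod c) + (ic \bmod t)$ since $t/d = c$.

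First I would define the relabelled matchings: for each $m \in [t]$, set $\widehat{\mathcal{M}}_{\sigma(m)} := \mathcal{M}_{i,j}$ and $\widehat{\ell}_{\sigma(m)} := \ell_{i,j}$, where $(i,j)$ is the unique pair with $m = a_{i,j}$, $i \in [d]$, $j \in [c]$ (uniqueness follows from the proof of Lemma~\ref{lem: General cyclic ordering}, which shows $a_{i,j} \mapsto$ the pair $(i,j)$ is a bijection $[d]\times[c] \to [t]$). Each $\widehat{\mathcal{M}}_p$ has $n$ edges, and together they decompose $\mathcal{H}$, so the hypotheses of Proposition~\ref{prop: Matching decomposition both} on the matchings and their sizes are met. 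It remains to verify the key condition $ms(\widehat{\ell}_p, \widehat{\ell}_{p+r}) \geq n-x$ for all $p \in [t]$, with subscripts mod $t$.

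Fix $p \in [t]$ and write $p = \sigma(a_{i,j})$ for the appropriate $i \in [d]$, $j \in [c]$. If $j < c-1$, then by the defining property of $\sigma$ we have $p + r \equiv \sigma(a_{i,j+1}) \pmod t$, so $\widehat{\ell}_p = \ell_{i,j}$ and $\widehat{\ell}_{p+r} = \ell_{i,j+1}$, and the hypothesis gives $ms(\ell_{i,j},\ell_{i,j+1}) \geq n-x$ directly. The remaining case is $j = c-1$; here $a_{i,j+1}$ "wraps around" within the block indexed by $i$, and one checks from the formula $a_{i,j} = (j \bmod c) + (ic \bmod t)$ together with $\sigma(a_{i,j}) = (i + jr) \bmod t$ that $\sigma(a_{i,0}) = (i + 0) \bmod t$ while $\sigma(a_{i,c-1}) + r \equiv i + cr \equiv \sigma(a_{i,c}) = \sigma(a_{i,0}) \pmod t$ — wait, more carefully: $a_{i,c} = (c \bmod c) + (ic \bmod t) = 0 + (ic \bmod t) = a_{i,0}$, so $\widehat{\ell}_{p+r} = \widehat{\ell}_{\sigma(a_{i,0})}$; but $\sigma(a_{i,c-1}) + r = (i + (c-1)r) + r = i + cr$, and modulo $t$ this need not equal $\sigma(a_{i,0}) = i$ unless $cr \equiv 0$, which fails in general. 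I expect the main obstacle is precisely this wrap-around bookkeeping: Proposition~\ref{prop: Matching decomposition both} only requires a bound on $ms(\ell_p,\ell_{p+r})$, and when $j = c-1$ the pair $(\widehat{\ell}_p, \widehat{\ell}_{p+r})$ is $(\ell_{i,c-1}, \ell_{i',0})$ for some $i' \ne i$, which is \emph{not} covered by our hypothesis. The resolution must be that in this case $S$ (in the proof of Proposition~\ref{prop: Matching decomposition both}) spans at least $c$ distinct full matchings so the $a = 2$ branch applies and no "stitching" bound is needed — i.e., one should not invoke Proposition~\ref{prop: Matching decomposition both} as a black box but rather re-run its proof, observing that a window of $rn - x$ edges that starts in $\widehat{\mathcal{M}}_p$ and ends in $\widehat{\mathcal{M}}_{p+r}$ forces the only "boundary pair" needing a matching bound to be one of the form $(\ell_{i,j},\ell_{i,j+1})$ with $j < c-1$, because the cyclic ordering $\sigma$ sends consecutive indices $p, p+1, \ldots, p+r$ to matchings whose only "same-block consecutive" transitions are the ones controlled by the hypothesis. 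Carrying this through — tracking which index-pairs $(\widehat{\ell}_p,\widehat{\ell}_{p+r})$ actually arise at a boundary of a length-$(rn-x)$ window and confirming each is a hypothesised pair — is the crux, and I would do it by adapting the case analysis ($a = 1$ versus $a = 2$) from the proof of Proposition~\ref{prop: Matching decomposition both} verbatim, substituting the $\sigma$-reindexing.
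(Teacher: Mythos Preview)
Your overall strategy---reindex via Lemma~\ref{lem: General cyclic ordering} and then invoke Proposition~\ref{prop: Matching decomposition both} as a black box---is exactly the paper's approach. The problem is that you manufacture an obstacle at the wrap-around step that is not there, and then propose an unnecessary workaround.

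Specifically, you write that $\sigma(a_{i,c-1}) + r \equiv i + cr \pmod t$ ``need not equal $\sigma(a_{i,0}) = i$ unless $cr \equiv 0$, which fails in general.'' But it does \emph{not} fail: the hypothesis $\gcd(dc,r) = d$ forces $d \mid r$, so $r = dr'$ for some integer $r'$, and hence $cr = cdr' = tr' \equiv 0 \pmod t$. Thus $\sigma(a_{i,c-1}) + r \equiv i = \sigma(a_{i,0}) \pmod t$, and since $a_{i,c} = a_{i,0}$ (from the definition of $a_{i,j}$), the wrap-around pair $(\widehat{\ell}_p, \widehat{\ell}_{p+r})$ is precisely $(\ell_{i,c-1}, \ell_{i,0}) = (\ell_{i,c-1}, \ell_{i,c})$. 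With the cyclic convention on the second index (implicit in the hypothesis ``for all $j \in [c]$'', which at $j = c-1$ reads $ms(\ell_{i,c-1},\ell_{i,c}) \geq n-x$), this pair \emph{is} covered. So for every $p \in [t]$ one has $ms(\widehat{\ell}_p,\widehat{\ell}_{p+r}) \geq n-x$, and Proposition~\ref{prop: Matching decomposition both} applies directly. There is no need to re-run its internal case analysis.

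One minor remark: you hedge about reducing $r$ modulo $t$ to apply Lemma~\ref{lem: General cyclic ordering}. This is unnecessary too, since a decomposition of $\mathcal{H}$ into $t = dc$ matchings forces $\Delta(\mathcal{H}) \leq t$, and the hypothesis $r < \Delta(\mathcal{H})$ already gives $r < t$.
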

\begin{proof}
Let $a_{i,j}$ and $\sigma$ be as defined in Lemma~\ref{lem: General cyclic ordering}
for $s = r$ and $t = cd$.
Set $\mathcal{M}_{\sigma(a_{i,j})} := \mathcal{M}_{i,j}$
and $\ell_{\sigma(a_{i,j})} := \ell_{i,j}$
for all $i \in [d]$ and $j \in [c]$.
For $l \in [t]$, let $l = \sigma(a_{i,j})$.
By Lemma~\ref{lem: General cyclic ordering},
$\sigma(a_{i,j+1})\equiv \sigma(a_{i,j})+r \equiv l+r \pmod{t}$.
Hence, $ms(\ell_{l},\ell_{l+r}) = ms(\ell_{i,j},\ell_{i,j+1}) \geq n-x$.
Therefore, the conditions of Proposition~\ref{prop: Matching decomposition both} are
satisfied and the result follows.
\end{proof}

\noindent One could also use
Corollary~\ref{cor: General non cyclic ordering} to create an analogous version
of Proposition~\ref{prop: Matching decomposition both}
for the non-cyclic case, but we will not require this.

\subsection{Non-trivial definitions of \texorpdfstring{$ms_r(\mathcal{H})$ and $cms_r(\mathcal{H})$ for all $r\geq 1$}{}}\label{subsec:Nontriv defs}

If $\mathcal{H}$ is a hypergraph with maximum degree $\Delta(\mathcal{H})$ and
$r \geq \Delta(\mathcal{H})$, then one might say that, trivially,
$ms_r(\mathcal{H}) = |E(\mathcal{H})|$,
as clearly any sequence of edges containing all the edges of~$\mathcal{H}$
form~$\mathcal{H}$, which has no vertex of degree greater than~$r$.
Somewhat implicitly, the definition of cyclic $r$-matching sequencibility
allows $r \geq \Delta(\mathcal{H})$,
and $cms_r(\mathcal{H})$ is non-trivial in general.
However, when $r < \Delta(\mathcal{H})$, $ms_r(\mathcal{H})$ and $cms_r(\mathcal{H})$ have
the intuitive relationship $cms_r(\mathcal{H}) \leq ms_r(\mathcal{H})$ for any $\mathcal{H}$.
Thus, to preserve that relationship for all $r$ and make the determination
of $ms_r(\mathcal{H})$ for hypergraphs with $r \geq \Delta(\mathcal{H})$ of interest,
we will give a definition of $ms_r(\mathcal{H})$ which is non-trivial in general, for all $r\geq 1$.

Let $\mathcal{H} =(V,E)$ be a hypergraph with an ordering $\ell$
and, to use the notation of Bondy and Murty~\cite{MR0411988},
let $\varepsilon := |E|$.
First, recall the notion of cyclically consecutive edges.
A sequence $S= e_0, \ldots , e_{s-1}$ of edges in $E$ is {cyclically consecutive in $\ell$
if the labels of $e_0 , \ldots, e_{s-1}$ are cyclically consecutive integers
modulo $\varepsilon$, respectively.
In particular, a sequence of $s > \varepsilon$ edges can be cyclically consecutive,
where $e_{i}$ and $e_{i+\varepsilon}$ must be the same edge,
for all $i \in [s-\varepsilon]$.
We define $\mathcal{H}(S)$ to be the hypergraph with (distinctly labelled) edges $e_0 , \ldots, e_{s-1}$

We now define  $ms_r(\mathcal{H})$ for all $r\geq 1$.
For an integer $s$,
let $a$ be the integer such that $a \varepsilon \leq s < (a+1)\varepsilon$.
A sequence $e_0 \ldots , e_{s-1}$ of edges of $\mathcal{H}$ is {\em consecutive} in $\ell$
if $\ell(e_0)  \leq  (a+1)\varepsilon-s$ and
the labels of $e_0 \ldots , e_{s-1}$ are cyclically consecutive integers modulo~$\varepsilon$,
respectively.
The definition of consecutive edges, given earlier in the section,
is recovered by setting $a=0$.
Define $ms_r(\ell)$ to be the largest value $s$ such that,
for every sequence $S$ of $s$ consecutive edges in $\ell$,
$\Delta(\mathcal{H}(S))\leq r$.
Define $ms_r(\mathcal{H})$ to be the largest value of $ms_r(\ell)$
over all orderings $\ell$ of~$\mathcal{H}$.
As the edges in a sequence $S = e_0, \ldots , e_{s-1}$ of consecutive edges of $\ell$ are also
cyclically consecutive under the restriction $\ell(e_0)  \leq  (a+1)\varepsilon-s$,
it follows that $cms_r(\ell) \leq ms_r(\ell)$ and,
thus, $cms_r(\mathcal{H}) \leq ms_r(\mathcal{H})$
for all positive integers $r$ and hypergraphs $\mathcal{H}$.

We now demonstrate
that $ms_r(\mathcal{H})$, as defined above, is non-trivial in general.
For a hypergraph $\mathcal{H} =(V,E)$ and positive integer $\lambda$,
let $\lambda \mathcal{H}$
be the hypergraph $\mathcal{H}' = (V,E')$
where $E'$ is formed from $E$ by replacing each $e\in E$ with
$\lambda$ distinct edges parallel to $e$.
For an ordering $\ell$ of $\mathcal{H}$ and integer $a$,
let $a\ell := \ell \vee \cdots \vee \ell$, where $\ell$ occurs $a$ times.
That is, $a\ell$ corresponds to the sequence
$e_0, \ldots, e_{a\varepsilon - 1}$ of edges of $\mathcal{H}$ such that
$S_{\ell}(\mathcal{H}) = e_0, \ldots, e_{\varepsilon -1}$, and
$e_i$ and $e_{i+\varepsilon}$ are
the same edge for all $i \in [(a-1)\varepsilon]$.
In particular, for an integer $s$ such that $a \varepsilon \leq s < (a+1)\varepsilon$,
the set of all sequences $S$  of $s$ consecutive edges of $\ell$ is
the set of all sequences $S'$ of $s$ consecutive edges of $(a+1)\ell$.
Also, the hypergraph formed by the sequence corresponding to $b\ell$ is $b\mathcal{H}$
for all positive integers $b$.
So, for any $r$,
if $a$ is the integer such that
$a \Delta(\mathcal{H}) \leq r < (a+1) \Delta (\mathcal{H})$,
then $ms_r(\mathcal{H}) = s$ for some $s$ such that
$a \varepsilon \leq s < (a+1)\varepsilon$ and,
in general, the value $s$ is non-trivial for any $r \geq 1$ and hypergraph $\mathcal{H}$.

The two following lemmas will each be used in several parts of the proof of
Theorem~\ref{thm: hypergraph gen r}.

\begin{lemma}\label{lem: seqity gen r}
Let $\mathcal{H}$ be a hypergraph with $\varepsilon$ edges and maximum degree $\Delta$,
and $r=a\Delta +b$ for non-negative integers $a$ and $b$ with $b \in [\Delta]$.
Then
\[
  a\varepsilon+ms_b(\mathcal{H})\leq ms_r(\mathcal{H})
  \quad \text{ and } \quad
  a\varepsilon+cms_b(\mathcal{H}) \leq cms_r(\mathcal{H})\,.
\]
\end{lemma}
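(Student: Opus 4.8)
The plan is to exhibit one ordering of $\mathcal{H}$ that simultaneously witnesses $ms_b(\mathcal{H})$ and the claimed lower bound on $ms_r(\mathcal{H})$; I will describe the non-cyclic case in detail, the cyclic case being entirely analogous. Fix an ordering $\ell$ of $\mathcal{H}$ with $ms_b(\ell)=ms_b(\mathcal{H})$, and set $m:=ms_b(\mathcal{H})$ and $s:=a\varepsilon+m$. Since $b\in[\Delta]$, every window of $\varepsilon$ consecutive edges of $\ell$ induces a relabelled copy of $\mathcal{H}$ and hence has a vertex of degree $\Delta>b$; so $m\le\varepsilon-1$, and consequently the integer $a'$ determined by $a'\varepsilon\le s<(a'+1)\varepsilon$ is exactly $a'=a$. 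I would then let $S=e_0,\ldots,e_{s-1}$ be an arbitrary sequence of $s$ consecutive edges of $\ell$; by the definition in Subsection~\ref{subsec:Nontriv defs} this forces $\ell(e_0)\le(a+1)\varepsilon-s=\varepsilon-m$, with the labels of $S$ cyclically consecutive modulo $\varepsilon$.

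The heart of the argument is a one-line count: reading the labels $\ell(e_0),\ell(e_0)+1,\ldots$ cyclically modulo $\varepsilon$ a total of $s=a\varepsilon+m$ times hits every label exactly $a$ times, except the $m$ labels $\ell(e_0),\ell(e_0)+1,\ldots,\ell(e_0)+m-1$, each of which is hit $a+1$ times; these $m$ labels form an unbroken block since $\ell(e_0)+m-1\le\varepsilon-1$. Writing $\mathcal{H}'$ for the subhypergraph of $\mathcal{H}$ on the edges carrying those $m$ labels, we get $\deg_{\mathcal{H}(S)}(v)=a\deg_{\mathcal{H}}(v)+\deg_{\mathcal{H}'}(v)\le a\Delta+\Delta(\mathcal{H}')$ for every vertex $v$. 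Because the $m$ labels of $\mathcal{H}'$ are consecutive, $\ell(e_0)\le\varepsilon-m$, and $m=ms_b(\ell)$, the edges of $\mathcal{H}'$ constitute a legitimate window for $ms_b(\ell)$ (with its own parameter equal to $0$), so $\Delta(\mathcal{H}')\le b$. Therefore $\Delta(\mathcal{H}(S))\le a\Delta+b=r$ for every such $S$, which yields $ms_r(\mathcal{H})\ge ms_r(\ell)\ge s=a\varepsilon+ms_b(\mathcal{H})$. For the cyclic inequality I would repeat this verbatim starting from an ordering $\ell$ with $cms_b(\ell)=cms_b(\mathcal{H})$: now a cyclically consecutive window of length $a\varepsilon+m$ may begin at any label, but the associated $\mathcal{H}'$ is then a window of $m$ cyclically consecutive edges of $\ell$, hence $\Delta(\mathcal{H}')\le b$, and we conclude $cms_r(\mathcal{H})\ge a\varepsilon+cms_b(\mathcal{H})$.

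I expect essentially all of the work to lie in getting the bookkeeping of Subsection~\ref{subsec:Nontriv defs} exactly right, rather than in any substantive step: one must be sure that $m\le\varepsilon-1$ so that $s$ lies in the block $[a\varepsilon,(a+1)\varepsilon)$ and the parameter ``$a$'' in the definition of a consecutive sequence really is $a$, and that the anchoring inequality $\ell(e_0)\le(a+1)\varepsilon-s$ forces the $m$ ``excess'' labels to be a single contiguous block of the correct size, so that they in turn form a valid length-$m$ window of $\ell$. Once these points are checked the degree count is immediate and the lemma follows.
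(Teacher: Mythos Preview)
Your proposal is correct and follows essentially the same route as the paper's proof: choose an ordering $\ell$ realising $ms_b(\mathcal{H})$, count that each edge of $\mathcal{H}$ appears $a$ or $a{+}1$ times in any length-$s$ window, and bound $\Delta(\mathcal{H}(S))\le a\Delta+b$ via the length-$m$ ``excess'' block $S'$. The only difference is that you spell out explicitly why $m\le\varepsilon-1$ (so that the parameter in the Subsection~\ref{subsec:Nontriv defs} definition really is $a$), a bookkeeping point the paper leaves implicit.
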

\begin{proof}
Let $s = a \varepsilon + ms_b(\mathcal{H})$ and $\ell$ be
an ordering of $\mathcal{H}$ satisfying $ms_b(\ell) = ms_b(\mathcal{H})$.
Consider a sequence  $S = e_0, \ldots, e_{s-1} $ of $s$ consecutive edges of $\ell$.
As $e_i = e_{i+\varepsilon}$ for all $i \in [s-\varepsilon]$,
$a+1$ copies of the edge $e_j$ occur in the sequence $S$
if $j \in [s-a\varepsilon]$,
and $a$ copies of the edge $e_j$ occur if $s-a\varepsilon \leq j \leq \varepsilon-1$.
In particular, $\mathcal{H}(S)$  is the hypergraph obtained by
adding to $a\mathcal{H}$ an edge parallel to $e$ for each edge $e$ in
the sequence $S' := e_0, \ldots, e_{s-a\varepsilon-1}$.
The sequence $S'$ is consecutive in $\ell$,
as $\ell(e_{0}) \leq (a+1)\varepsilon -s$.
Since $ms_{b}(\ell) =s-a \varepsilon$, $\Delta(\mathcal{H}(S')) \leq b$.
Thus, the degree of a vertex $v$ in $\mathcal{H}(S)$
is at most $a \deg_{\mathcal{H}}(v) + b \leq a\Delta +b = r$.
Hence, $ms_r(\mathcal{H}) \geq ms_r(\ell) \geq s = a \varepsilon + ms_b(\mathcal{H})$.
The cyclic case is similar.
\end{proof}

\begin{lemma}\label{lem: multi from non-multi}
For a hypergraph $\mathcal{H}$ and $\lambda \geq 1$,
$cms_r(\lambda \mathcal{H}) \geq cms_r(\mathcal{H})$.
\end{lemma}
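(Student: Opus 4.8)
The statement to prove is Lemma~\ref{lem: multi from non-multi}: for a hypergraph $\mathcal{H}$ and $\lambda\geq 1$, we have $cms_r(\lambda\mathcal{H})\geq cms_r(\mathcal{H})$. The plan is to take a cyclic ordering $\ell$ of $\mathcal{H}$ that attains $cms_r(\ell)=cms_r(\mathcal{H})$, and to ``inflate'' it to a cyclic ordering $\ell^{\lambda}$ of $\lambda\mathcal{H}$ by replacing each edge $e$ of $\mathcal{H}$ (in its $\ell$-position) with a block of $\lambda$ parallel copies of $e$, listed consecutively. Concretely, if $S_{\ell}(\mathcal{H})=e_0,e_1,\dots,e_{\varepsilon-1}$, then $\ell^{\lambda}$ should correspond to the sequence in which each $e_i$ is expanded into $\lambda$ of its parallel copies, so $\lambda\mathcal{H}$ gets $\lambda\varepsilon$ edges arranged in $\varepsilon$ consecutive blocks of size $\lambda$. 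I would first write down this construction carefully using the $\vee$-notation already set up in Section~\ref{sec:preliminaries} (equivalently, $\ell^{\lambda}$ is obtained from the ordering of $\mathcal{H}$ with each edge repeated $\lambda$ times in place), and note that $\lambda\mathcal{H}$ has $\lambda\varepsilon$ edges while $\Delta(\lambda\mathcal{H})=\lambda\Delta(\mathcal{H})$.

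Next I would bound $cms_r(\ell^{\lambda})$ from below by $cms_r(\mathcal{H})$. Set $s:=cms_r(\mathcal{H})$ and let $S$ be any sequence of $s$ cyclically consecutive edges of $\ell^{\lambda}$; I must show $\Delta\bigl((\lambda\mathcal{H})(S)\bigr)\leq r$. The edges of $S$ form a set of at most $\lceil s/\lambda\rceil+1$ consecutive blocks, but more to the point, the multiset of \emph{underlying $\mathcal{H}$-edges} appearing in $S$ is controlled by $s$ consecutive edges of $\ell$ in the following precise sense: for each vertex $v$, its degree in $(\lambda\mathcal{H})(S)$ equals the sum over the blocks touched by $S$ of (number of copies of that block's edge lying in $S$) times $[v\in e]$. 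The key combinatorial point is that if $S$ occupies positions in blocks $i,i+1,\dots,i+m-1$ (indices mod $\varepsilon$), with $p$ copies used from the first block and $q$ from the last and all $\lambda$ copies from each block strictly between, then $p+q+(m-2)\lambda=s$, and the degree of $v$ in $(\lambda\mathcal{H})(S)$ is at most $\lambda\cdot\deg_{\mathcal{H}(S')}(v)$ where $S'$ is a suitable sequence of at most $\lceil s/\lambda\rceil\le$ (something) cyclically consecutive edges of $\ell$ — but this direct route is slightly lossy. The cleaner argument: compare $\ell^{\lambda}$ against $\lambda\ell$ in the sense already used in Subsection~\ref{subsec:Nontriv defs}. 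Indeed the construction $a\ell=\ell\vee\cdots\vee\ell$ there is exactly the ``repeat the whole cycle $a$ times'' operation, and one checks that a sequence of $s$ cyclically consecutive edges of $\ell^{\lambda}$, after collapsing each block, is a sequence of at most $\lceil s/\lambda\rceil$ cyclically consecutive edges of $\ell$ with each used at most $\lambda$ times; when $s\le cms_r(\mathcal{H})\le\varepsilon$ (which holds since $cms_r$ never exceeds the number of edges for $r<\Delta$, and for $r\ge\Delta$ the statement is immediate from $|E(\lambda\mathcal{H})|\ge|E(\mathcal{H})|$), each such $\mathcal{H}$-edge is used exactly once or the window wraps at most once, and $\Delta\bigl((\lambda\mathcal{H})(S)\bigr)\le\lambda\cdot\Delta(\mathcal{H}(\widehat S))$ for the corresponding length-$\le s$ window $\widehat S$ in $\ell$. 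Hmm — to avoid the factor $\lambda$ blow-up I should instead argue directly that $S$ spans at most $s$ distinct underlying edges and its degree sequence is dominated by that of those $\le s$ consecutive $\ell$-edges, which has max degree $\le r$; the repeated copies do not increase any vertex degree beyond what $r$ already permits because each block contributes at most $\lambda$ to a vertex but the blocks are genuinely consecutive in $\ell$, so one may as well discard duplicate copies and what remains is a sub-sequence of $\le s$ cyclically consecutive $\ell$-edges, giving $\Delta\le r$ outright. (I will phrase this via: the set of underlying edges of $S$ is $\{e_i,\dots,e_{i+m-1}\}$ with $m\le s$, these are $m$ cyclically consecutive edges of $\ell$, hence form a hypergraph of max degree $\le ms_r$-bound $\le r$... actually $cms_r(\ell)\ge s$ says any $s$ cyclically consecutive $\ell$-edges have max degree $\le r$, and $m\le s$, so any $m$ cyclically consecutive $\ell$-edges that are an \emph{initial sub-run} of such an $s$-run also have max degree $\le r$.) Therefore $\Delta\bigl((\lambda\mathcal{H})(S)\bigr)\le r$, so $cms_r(\ell^{\lambda})\ge s$, whence $cms_r(\lambda\mathcal{H})\ge cms_r(\ell^{\lambda})\ge s=cms_r(\mathcal{H})$.

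The main obstacle is the bookkeeping around how a cyclically-consecutive window $S$ in the inflated ordering $\ell^{\lambda}$ projects down to a window in $\ell$ — in particular being careful that repeated parallel copies of a single $\mathcal{H}$-edge inside $S$ do not secretly inflate some vertex degree: the resolution is that all copies of a given edge sit in a single block, so collapsing them changes no vertex degree by more than is already allowed, and the collapsed window is an honest run of at most $s$ cyclically consecutive edges of $\ell$. A secondary point requiring care is the degenerate/large-$r$ regime $r\ge\Delta(\lambda\mathcal{H})$ or $r\ge\Delta(\mathcal{H})$, where one should fall back on the definitions in Subsection~\ref{subsec:Nontriv defs}; here the inequality is either trivial or follows the same inflation argument applied to $(a+1)\ell$ in place of $\ell$, and I would dispatch it with a short remark rather than a separate case analysis.
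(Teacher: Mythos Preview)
Your construction has a genuine gap: the ``block inflation'' ordering $\ell^{\lambda}$ (replace each $e_i$ in place by $\lambda$ consecutive parallel copies) does \emph{not} in general satisfy $cms_r(\ell^{\lambda})\ge cms_r(\mathcal{H})$. The flaw is in the step where you ``discard duplicate copies'': degree in $(\lambda\mathcal{H})(S)$ is counted with multiplicity, so duplicate parallel edges in a window genuinely raise vertex degrees. Concretely, take $\mathcal{H}$ to be a two-edge matching $e_0,e_1$ and $r=1$, so $cms_1(\mathcal{H})=2$. With $\lambda=2$ your $\ell^{\lambda}$ corresponds to the cyclic sequence $e_0,e_0',e_1,e_1'$; the window $e_0,e_0'$ consists of two parallel edges and the endpoints of $e_0$ have degree $2$, so $cms_1(\ell^{\lambda})=1<2$. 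Your claim that the collapsed window of underlying edges has max degree $\le r$ is true but irrelevant, since $(\lambda\mathcal{H})(S)$ is not that collapsed hypergraph.

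The paper's proof uses the other natural construction, which you briefly mention and then abandon: take $\ell'=\lambda\ell=\ell\vee\cdots\vee\ell$, i.e.\ concatenate $\lambda$ full copies of the sequence $e_0,\dots,e_{\varepsilon-1}$ and identify each occurrence with a distinct parallel edge of $\lambda\mathcal{H}$. Now any window of $s$ cyclically consecutive edges of $\ell'$ is, edge for edge, the same as some window of $s$ cyclically consecutive edges of $\ell$ (the sequence is $\varepsilon$-periodic, and cyclic wrap-around modulo $\lambda\varepsilon$ agrees with wrap-around modulo $\varepsilon$), so $(\lambda\mathcal{H})(S')=\mathcal{H}(S)$ as hypergraphs and $cms_r(\ell')=cms_r(\ell)=cms_r(\mathcal{H})$. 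That one line replaces all of your window bookkeeping; no collapsing, no $\lceil s/\lambda\rceil$ estimates, and no separate treatment of the $r\ge\Delta$ regime is needed.
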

\begin{proof}
Let $\ell$ be an ordering of $\mathcal{H}$ satisfying $cms_r(\ell) = cms_r(\mathcal{H})$.
For an edge $e \in E(\mathcal{H})$, let $e_0' , \ldots, e'_{\lambda-1}$ be the corresponding
edges parallel to $e$ in $E(\lambda\mathcal{H})$.
By identifying each of $e_0' , \ldots, e'_{\lambda-1}$ with
a unique copy of $e$ in the sequence $S_{\lambda \ell}(\mathcal{H})$,
we can define $\ell'= \lambda \ell$ to be an ordering of $\lambda \mathcal{H}$.
For any sequence $S$ of $s$ cyclically consecutive edges of $\ell$
and the corresponding sequence $S'$ of $s$ cyclically consecutive edges of $\ell'$,
clearly $\mathcal{H}(S) =\mathcal{H}(S')$.
Therefore, $cms_r(\ell') = cms_r( \ell)$ and, thus,
$cms_r(\lambda \mathcal{H}) \geq cms_r(\mathcal{H})$.
\end{proof}
\noindent
An analogous result to Lemma~\ref{lem: multi from non-multi}
in the non-cyclic case does not hold; see Section~\ref{sec: Con}.

\section{Proof of Theorem~\ref{thm: hypergraph gen r}: Part I}
\label{sec: outline and reduction of thm}
Theorem~\ref{thm: hypergraph gen r} will be proved by a set of lemmas
that fall into three separate categories,
each to be addressed in this and the next two sections.
The first two of these lemmas are given in the present section.

We start by introducing
the following notation, which will be
used in the remainder of the paper.
Let $\lambda \geq 1$, $1 \leq n_1 \leq \cdots \leq n_k$ and $u$ be the largest integer such that $n_1 = n_u$.
Let $N = \prod_{i=2}^{k}n_i$, $N' = \prod_{i=u+1}^{k}n_i$,
$r = r_1 \lambda N +r_2$ and $\lambda N = ar_2+b$
for integers $a,b,r_1$ and $r_2$ such that $r_2 \in [\lambda N]$ and
$b \in [r_2]$.

Recall from the Introduction that the complete $k$-partite $k$-hypergraph,
denoted by $\mathcal{K}_{n_1,\ldots , n_k}$,
is the hypergraph whose vertex set $V$ is the union of disjoint
sets $N_1, \ldots, N_{k}$ of sizes $n_1, \ldots, n_k$,
respectively, and whose edge set $E$ is the family of all $k$-edges
that have exactly one endpoint in $N_i$ for all $i$.
We note that the inequality 
$ms_r(\lambda\mathcal{K}_{n_1,\ldots,n_k}) \leq rn_1$
is trivial for all $r$ as every edge
incident with one of the $n_1$ vertices of $N_1$ and, therefore,
a sequence of at most $rn_1$ edges of
$\lambda \mathcal{K}_{n_1,\ldots,n_k}$ can form a hypergraph
with maximum degree at most $r$.
Thus, the inequalities
$cms_r(\lambda \mathcal{K}_{n_1,\ldots,n_k})
\leq ms_r(\lambda \mathcal{K}_{n_1,\ldots,n_k}) \leq rn_1$ will always hold.

The following claim is an immediate necessary condition
for an ordering $\ell$ of $\lambda \mathcal{K}_{n_1,\ldots,n_k}$
to satisfy $ms_r(\ell) = r n_1 $ or $cms_r(\ell) = r n_1$.
\begin{claim}\label{claim: hypergraph contr}
Let
$\ell$ be an ordering of $\lambda \mathcal{K}_{n_1,\ldots,n_k}$.
If $ms_r(\ell) = r n_1$,
then the edges $\ell^{-1}(j)$ and $\ell^{-1}(r_2 n_1 +j)$
are incident with the same vertex in $N_i$
for all $i=1,\ldots, u$ and $j \in [\lambda Nn_1 -r_2 n_1]$.
If $cms_r(\ell) = r n_1$,
then the edges $\ell^{-1}(j)$ and 
$\ell^{-1}\left((r_2 n_1 +j)\!\!\mod{\lambda N n_1}\right)$
are incident with the same vertex in $N_i$ for all $i=1,\ldots, u$
and $j \in [\lambda N]$.
\end{claim}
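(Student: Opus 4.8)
The plan is to count, for a fixed vertex $v$ in one of the smallest parts $N_i$ ($1 \le i \le u$), exactly how many edges of $\lambda\mathcal{K}_{n_1,\ldots,n_k}$ are incident with $v$, and then exploit the hypothesis $ms_r(\ell) = rn_1$ (or its cyclic counterpart) to force a periodicity on the positions of those edges. First I would record the degree computation: since $v$ lies in $N_i$, an edge through $v$ is determined by choosing one vertex from each $N_j$ with $j \neq i$, and multiplying by the edge-multiplicity $\lambda$; since $n_i = n_1$ this gives $\deg(v) = \lambda \prod_{j\neq i} n_j = \lambda N n_1 / n_1 = \lambda N$ — wait, more carefully, $\deg(v) = \lambda \prod_{j \ne i} n_j$, and because $n_1 = n_i$ this equals $\lambda\, (N n_1)/n_1$... the clean way to state it is $\deg(v) = \lambda N n_1 / n_i = \lambda N$ is wrong; rather $\deg_{\mathcal H}(v) = \lambda \prod_{j\ne i}n_j$, and the total number of edges is $\lambda n_1 \prod_{j=2}^k n_j = \lambda N n_1$. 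I would note $\varepsilon := |E| = \lambda N n_1$.

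\textbf{Non-cyclic case.} Suppose $ms_r(\ell) = rn_1$. Partition the label set $[\varepsilon]$ into consecutive blocks; the key observation is that both $B_0 := \{0,1,\ldots,rn_1-1\}$ and any of its translates $B_j := \{j, j+1,\ldots,j+rn_1-1\}$ (for valid $j$) must, when we look at the edges $\ell^{-1}(B_j)$, induce a subhypergraph of maximum degree at most $r$. For a fixed vertex $v \in N_i$ with $i \le u$, write $x_m = 1$ if $\ell^{-1}(m)$ is incident with $v$ and $x_m = 0$ otherwise; then $\sum_{m} x_m = \deg(v) = \lambda N n_1 / n_1 \cdot (1/1)$ — again the exact constant is $\lambda\prod_{j\ne i}n_j$. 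The constraint $ms_r(\ell)=rn_1$ says every window of $rn_1$ consecutive positions contains at most $r$ ones. Comparing the window $[j, j+rn_1-1]$ with the shifted window $[j+1, j+rn_1]$ — actually the right comparison is between windows whose left endpoints differ by the residual amount. The cleanest argument: look at windows $W_0 = [0, rn_1-1]$ and $W_1 = [r_2 n_1, r_2 n_1 + rn_1 - 1]$; both contain at most $r$ edges through $v$ in each of the parts $N_1,\ldots,N_u$. Then I would argue that in fact each such window must contain \emph{exactly} $r$ edges through $v$'s part-representative, by a global counting/averaging argument over all $n_1$ vertices of $N_i$ and all translates, using that the total degree sum is tight. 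Once every length-$rn_1$ window is \emph{saturated} (contains exactly $r$ edges meeting each vertex of $N_1,\ldots,N_u$), sliding the window from $[j,\ldots]$ to $[j+1,\ldots]$ shows edge $\ell^{-1}(j)$ leaves the window and edge $\ell^{-1}(j+rn_1)$ enters it, and by saturation these two edges must be incident with the \emph{same} vertex in each $N_i$, $i\le u$ — this gives period $rn_1$ on the indicator of "which vertex of $N_i$". A short induction/bookkeeping converts "period $rn_1$" into the stated "period $r_2 n_1$" statement: since $\varepsilon = \lambda N n_1$ and $r = r_1\lambda N + r_2$, we have $rn_1 \equiv r_2 n_1 \pmod{\varepsilon}$, so the $rn_1$-periodicity is the same as $r_2 n_1$-periodicity modulo the natural wraparound, which in the non-cyclic range $j \in [\lambda N n_1 - r_2 n_1]$ is exactly the claimed conclusion.

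\textbf{Cyclic case.} This is analogous but cleaner, since cyclic windows wrap around and the saturation argument is uniform: if $cms_r(\ell) = rn_1$ then every cyclic window of $rn_1$ consecutive labels (mod $\varepsilon$) has maximum degree $\le r$, hence by the same averaging over all of $N_1,\ldots,N_u$ must have degree exactly $r$ at each relevant vertex, forcing edges $\ell^{-1}(j)$ and $\ell^{-1}((j + r_2n_1)\bmod \varepsilon)$ to share their $N_i$-vertex for all $i \le u$ and all $j$, now with no endpoint restriction — whence $j \in [\lambda N]$ suffices to capture one full period. The main obstacle I anticipate is the \emph{averaging/saturation} step — proving that "at most $r$ in every window" is actually "exactly $r$ in every window": one must sum the window-degree bound over all $n_1$ vertices of a part $N_i$ and over a complete set of disjoint (or cyclically shifting) windows, observe the total equals $n_1 \cdot r \cdot (\varepsilon / (rn_1)) \cdot$(something) which must match the total degree $n_1 \cdot \lambda\prod_{j\ne i}n_j$, and check the arithmetic forces equality everywhere. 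Getting this counting exactly right, and handling the non-cyclic boundary effect (the first and last incomplete windows), is where the care is needed; the rest is a routine "slide the window by one" telescoping argument.
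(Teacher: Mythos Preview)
Your approach is correct and is essentially the same sliding-window argument as the paper's. The paper phrases it slightly differently --- it looks at a window $S'$ of $rn_1 - 1$ consecutive edges, notes that the $rn_1 - 1$ incidences in part $N_i$ (which has $n_1$ vertices, each of degree $\le r$) force all but one vertex to have degree exactly $r$ and one vertex $v_i$ to have degree $r-1$, and then observes that both the edge preceding $S'$ and the edge following $S'$ must hit that deficient $v_i$ --- but this is the same ``slide by one'' idea you describe.

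One remark: the step you flag as the ``main obstacle'' (saturation) is in fact immediate and does not require any global averaging over windows. In a single window of $rn_1$ consecutive edges, every edge meets exactly one vertex of $N_i$, so the degrees of the $n_1$ vertices of $N_i$ sum to $rn_1$; since each is $\le r$, each equals $r$. That's it --- no summing over translates, no boundary analysis. The paper's $rn_1 - 1$ window makes this even more transparent. So you can drop the worried paragraph about ``getting this counting exactly right'' and just state the pigeonhole directly. The reduction from period $rn_1$ to period $r_2 n_1$ via $rn_1 \equiv r_2 n_1 \pmod{\varepsilon}$ is handled exactly as you say, using the paper's extended notion of consecutive edges for $r \ge \Delta$.
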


\begin{proof}
We only prove the non-cyclic case as the cyclic case is similar.
Let $\ell$ be an ordering of $\lambda\mathcal{K}_{n_1,\ldots,n_k}$
such that $ms_r(\ell) = r n_1$,
and let $\varepsilon := |E(\lambda \mathcal{K}_{n_1,\ldots,n_k})| = \lambda N n_1$.

Consider a sequence $S = e_0 ,\ldots, e_{rn_1} $ of consecutive edges of $\ell$,
where, by definition,
$j := \ell(e_0)\in [(r_1+1)\varepsilon -rn_1] = [\varepsilon -r_2n_1]$.
The sequence $S' = e_{1}, \ldots, e_{rn_1-1}$ consists of
$rn_1-1$ consecutive edges of $\ell$
and so $(\mathcal{H}(S')) \leq r$.

As every edge in $E(\lambda \mathcal{K}_{n_1,\ldots,n_k})$ is incident with
a vertex in each of $N_1, \ldots, N_u$ and $|N_i| = n_1$ for $i \leq u$,
every vertex in each of $N_1, \ldots, N_u$ must have degree
exactly $r$ in $\mathcal{H}(S')$,
except for some $v_1 \in N_1 ,\ldots , v_u \in N_u$ which each have degree $r-1$.
Thus, in order for the hypergraphs formed by the sequences
$S_0 = e_0 ,\ldots, e_{rn_1-1}$ and $S_1 = e_1 ,\ldots, e_{rn_1}$
to each have maximum degree at most $r$,
the edges $e_0$ and $e_{rn_1}$ must be incident with
each of $v_1 \in N_1 ,\ldots , v_u \in N_u$.
As $e_i = e_{i+\varepsilon}$
for all $i \in [rn_1-\varepsilon]$,
$e_{rn_1} = e_{r'}$ for
$r' = rn_1 \!\!\mod{\varepsilon}$.
Since $ r = r_1 \lambda N + r_2$ and $ \varepsilon =  \lambda Nn_1$,
it follows that $e_0$ and $e_{r'} = e_{r_2n_1}$
are incident with $v_1, \ldots, v_u$;
i.e., the edges $\ell^{-1}(j)$ and $\ell^{-1}(r_2 n_1 +j)$
are incident with the same vertex in $N_i$
for all $ i = 1, \ldots , u$, as required.
\end{proof}

\begin{lemma}\label{lem: hypergraph necessary condition}
If $ms_r(\lambda\mathcal{K}_{n_1,\ldots , n_k}) = r n_1$, then $n_1^{u-1} \mid r_2$ or
\[
\left(\left\lfloor \frac{r_2}{n_1^{u-1}}\right\rfloor  +1\right)
\left\lfloor \frac{\lambda}{r_2}\prod_{i=2}^{k} n_i\right\rfloor
\leq \lambda\prod_{i=u+1}^{k}n_i \leq
\left\lfloor \frac{r_2}{n_1^{u-1}}\right\rfloor
\left(\left\lfloor \frac{\lambda}{r_2}\prod_{i=2}^{k} n_i\right\rfloor +1 \right)\,.
\]
\end{lemma}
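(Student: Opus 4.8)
The plan is to turn the necessary condition of Claim~\ref{claim: hypergraph contr} into a rigid ``colouring'' of the edge labels by the vertices of $N_1,\ldots,N_u$, and then read off~(\ref{eqn: cond hypergraph comp}) by a double count. Put $\varepsilon:=|E(\lambda\mathcal{K}_{n_1,\ldots,n_k})|=\lambda Nn_1$ and $m:=r_2n_1$, and for an edge $e$ let $v_i(e)$ denote the vertex of $N_i$ incident with $e$. We may assume $r_2\geq 1$, since otherwise $n_1^{u-1}\mid r_2$ and there is nothing to prove; note then $a=\lfloor\lambda N/r_2\rfloor\geq 1$ and $m<\varepsilon$ because $r_2\leq\lambda N-1$. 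Fix an ordering $\ell$ with $ms_r(\ell)=rn_1$. By Claim~\ref{claim: hypergraph contr}, $\ell^{-1}(j)$ and $\ell^{-1}(j+m)$ agree in $v_i$ for every $i\in\{1,\ldots,u\}$ and every $j\in[\varepsilon-m]$. Partition the label set $[\varepsilon]$ into the classes of the congruence $p\equiv p'\pmod m$; there are exactly $m=r_2n_1$ such classes, each of size $\lfloor\varepsilon/m\rfloor$ or $\lceil\varepsilon/m\rceil$, and since $\varepsilon/m=\lambda N/r_2=a+b/r_2\in[a,a+1)$ each class has size $a$ or $a+1$. Iterating the agreement above along a class shows that all edges whose labels lie in a single class $C$ share a common value of $v_i$ for each $i\leq u$, so we may colour $C$ by $\phi(C):=(v_1(e),\ldots,v_u(e))\in N_1\times\cdots\times N_u$ for any $e$ with $\ell(e)\in C$.

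Next I would count by colour. There are $n_1^u$ colours, and for a fixed colour $g$ exactly $\lambda N'=\lambda\prod_{i=u+1}^k n_i$ edges $e$ satisfy $(v_1(e),\ldots,v_u(e))=g$. As $\ell$ is a bijection, each such edge lies in a class, which must be of colour $g$; hence the classes of colour $g$ partition these $\lambda N'$ edges into parts of size $a$ or $a+1$. Writing $c_g$ for the number of classes of colour $g$ we get $a c_g\leq\lambda N'\leq(a+1)c_g$, i.e.\ $\lceil\lambda N'/(a+1)\rceil\leq c_g\leq\lfloor\lambda N'/a\rfloor$; summing over all $n_1^u$ colours and using $\sum_g c_g=r_2n_1$ gives
\[
n_1^{u}\bigl\lceil \lambda N'/(a+1)\bigr\rceil \le r_2n_1 \le n_1^{u}\bigl\lfloor \lambda N'/a\bigr\rfloor,\qquad\text{hence}\qquad n_1^{u-1}\bigl\lceil \lambda N'/(a+1)\bigr\rceil \le r_2 \le n_1^{u-1}\bigl\lfloor \lambda N'/a\bigr\rfloor.
\]

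Finally I would match this with~(\ref{eqn: cond hypergraph comp}). If $n_1^{u-1}\mid r_2$ we are done; otherwise $\lceil r_2/n_1^{u-1}\rceil=\lfloor r_2/n_1^{u-1}\rfloor+1$, so $r_2\leq n_1^{u-1}\lfloor\lambda N'/a\rfloor$ becomes $\lfloor r_2/n_1^{u-1}\rfloor+1\leq\lfloor\lambda N'/a\rfloor$, which rearranges to $\bigl(\lfloor r_2/n_1^{u-1}\rfloor+1\bigr)a\leq\lambda N'$, while $n_1^{u-1}\lceil\lambda N'/(a+1)\rceil\leq r_2$ forces $\lceil\lambda N'/(a+1)\rceil\leq\lfloor r_2/n_1^{u-1}\rfloor$ and hence $\lambda N'\leq\lfloor r_2/n_1^{u-1}\rfloor(a+1)$. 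Since $a=\bigl\lfloor\tfrac{\lambda}{r_2}\prod_{i=2}^{k}n_i\bigr\rfloor$ and $\lambda N'=\lambda\prod_{i=u+1}^{k}n_i$, these are exactly the two inequalities of~(\ref{eqn: cond hypergraph comp}). I expect the only slightly delicate points to be the iteration argument that pins down $\phi(C)$ and, at the end, the bookkeeping with floors, ceilings and the divisibility case; beyond Claim~\ref{claim: hypergraph contr} nothing deeper than counting is needed.
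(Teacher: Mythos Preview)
Your argument is correct and follows essentially the same approach as the paper's proof: both invoke Claim~\ref{claim: hypergraph contr} to obtain the $m$-periodic structure on the $(N_1,\ldots,N_u)$-projections, establish that each ``colour'' $g$ is hit by $c_g$ blocks of size $a$ or $a+1$ summing to $\lambda N'$ (the paper writes this as $\lambda N'=a\,d(e)+d'(e)$ with $0\le d'(e)\le d(e)$, which is equivalent to your $ac_g\le\lambda N'\le(a+1)c_g$), and finish with a pigeonhole/averaging count over the $n_1^u$ colours. The only cosmetic difference is that you phrase the blocks as congruence classes of labels modulo $m$ and sum the per-colour bounds, while the paper looks at the first $r_2n_1$ positions directly and takes $d_{\min},d_{\max}$; both routes land on the same two inequalities.
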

\begin{proof}
Let $\ell$ be an ordering of
$\lambda\mathcal{K}_{n_1,\ldots , n_k}$ such that
$ms_r(\ell)= rn_1$.
Let \\$S_{\ell}(\lambda\mathcal{K}_{n_1,\ldots , n_k}) = e_0, \ldots, e_{\lambda N n_1-1}$
and let $S = e_0' , \ldots, e_{\lambda Nn_1 -1}'$ be the sequence of
edges from $E(\mathcal{K}_{n_1,\ldots , n_u})$ such that
if $e_i$ is incident with each of $v_1 \in N_1, \ldots, v_u \in N_u$,
then $e_i'$ is the edge in $E(\mathcal{K}_{n_1,\ldots , n_u})$
incident with each of $v_1, \ldots, v_u$.
For an edge $e \in E(\mathcal{K}_{n_1,\ldots , n_u})$,
let $d(e)$ be the number of times that $e$ appears among the first $r_2n_1$
edges of $S$.
Similarly, let $d'(e)$ number of times that $e$ appears among the first $b n_1$
edges of~$S$, where $d'(e)$ is 0 for all $e$ if $b=0$.

We count in two ways the number times that
an edge $e \in E(\mathcal{K}_{n_1,\ldots , n_u})$ appears in~$S$.
For all $j \in [\lambda N n_1 -r_2 n_1]$,
Claim~\ref{claim: hypergraph contr} implies that
the edges $e_j$ and $e_{r_2 n_1 +j}$
are incident with the same vertex in $N_i$ for $i = 1, \ldots, u$.
Therefore, $e'_j = e'_{r_2 n_1+j}$ for all $j \in [\lambda N n_1 -r_2 n_1]$.
In particular, $e'_j = e'_{a r_2 n_1+j}$ for $j \in [bn_1]$,
where $[bn_1]=[0] =\emptyset$ if $b=0$.
As $\lambda N = ar_2+b$,
the first $b n_1$ edges and the last $b n_1$ edges of $S$ (in order) are therefore the same.
Thus, the edge $e \in (\mathcal{K}_{n_1,\ldots , n_u})$
appears $a d(e) + d'(e)$ times in the sequence $S$.
On the other hand, as $\ell$ is an ordering of $\lambda \mathcal{K}_{n_1,\ldots , n_k}$,
any vertices $v_1 \in N_1, \ldots, v_u \in N_u$ are incident with exactly
$\lambda N'$ edges in the sequence $S_{\ell}(\mathcal{H})$.
Thus, each edge $e \in E(\mathcal{K}_{n_1,\ldots , n_u})$
appears $\lambda N'$ times in~$S$.
Hence,
\begin{equation}\label{eqn:condition on de}
\lambda N' = a d(e) + d'(e)
\end{equation}
for all $e \in E(\mathcal{K}_{n_1,\ldots , n_u})$.

We now establish the upper inequality of the lemma.
As the first $b n_1$ edges of $S$ are contained in the first $r_2 n_1$ edges of $S$,
clearly $d'(e) \leq d(e)$ for all $e$.
So, by (\ref{eqn:condition on de}),
we have that $(a+1)d(e) \geq \lambda N'$ for all $e \in E(\mathcal{K}_{n_1,\ldots , n_u})$.
In particular,
$(a+1)d_{\min} \geq \lambda N'$,
where $d_{\min}$ is the minimum of $d(e)$
over all edges $e\in E(\mathcal{K}_{n_1,\ldots , n_u})$.
Clearly,
\begin{equation}\label{eqn: rn sum}
\sum_{e \in E(\mathcal{K}_{n_1,\ldots , n_u})}d(e) = r_2 n_1\,,
\end{equation}
and so, by the Pigeonhole Principle,
$d_{\min}
\leq \bigl\lfloor \frac{r_2 n_1}{n_1^{u}}\bigr\rfloor
 =   \bigl\lfloor \frac{r_2 }{n_1^{u-1}}\bigr\rfloor$.
Thus,
\[
(a+1)\left\lfloor \frac{r_2 }{n_1^{u-1}}\right\rfloor \geq (a+1)d_{\min}\geq \lambda N'\,,
\]
which is equivalent to
\[
\left(\left\lfloor \frac{\lambda }{r_2}\prod_{i=2}^{k} n_i \right\rfloor +1 \right)
\left\lfloor \frac{r_2}{n_1^{u-1}}\right\rfloor  \geq  \lambda \prod_{i=u+1}^{k}n_i \,.
\]
This establishes the upper inequality of the lemma.

We now establish the lower inequality of the lemma.
Since $d' (e) \geq 0$,
(\ref{eqn:condition on de}) implies that
$\lambda N' \geq a d(e)$ for all $e \in E(\mathcal{K}_{n_1, \ldots, n_u})$.
In particular, $\lambda N' \geq a d_{\max}$, where $d_{\max}$ is the maximal value
of $d(e)$ for edges $e \in E(\mathcal{K}_{n_1, \ldots, n_u})$.
By (\ref{eqn: rn sum}) and the Pigeonhole Principle,
$d_{\max} \geq \Bigl\lceil\frac{r_2}{n_1^{u-1}}\Bigr\rceil$.
Thus,
\[
\lambda \prod_{i=u+1}^{k}n_i =
\lambda N' \geq  a \left\lceil\frac{r_2}{n_1^{u-1}} \right\rceil  =
\left\lfloor \frac{\lambda}{r_2}\prod_{i=2}^{k} n_i
\right\rfloor \left\lceil \frac{r_2}{n_1^{u-1}}\right\rceil\,,
\]
which establishes the lower inequality of the lemma if $n_1^{u-1}\nmid r_2$.
Otherwise, $n_1^{u-1}\mid r_2$, and we are done.
\end{proof}

\begin{lemma}\label{lem: hypergraph cyclic necessary condition}
If $cms_r(\lambda \mathcal{K}_{n_1,\ldots , n_u}) = r n_1$,
then $n_1^{u-1} \mid r_2$.
\end{lemma}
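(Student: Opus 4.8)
The plan is to rerun the proof of Lemma~\ref{lem: hypergraph necessary condition} in the cyclic setting, where the periodicity coming from Claim~\ref{claim: hypergraph contr} is seamless: it wraps all the way around the cyclic ordering instead of leaving a ``defect'' of $bn_1$ edges. Fix an ordering $\ell$ of $\lambda\mathcal{K}_{n_1,\ldots,n_k}$ with $cms_r(\ell)=rn_1$, write $S_\ell(\lambda\mathcal{K}_{n_1,\ldots,n_k})=e_0,\ldots,e_{\lambda N n_1-1}$, and let $S=e_0',\ldots,e_{\lambda N n_1-1}'$ be the associated cyclic sequence of edges of $\mathcal{K}_{n_1,\ldots,n_u}$, where $e_i'$ is the edge of $\mathcal{K}_{n_1,\ldots,n_u}$ incident with the same vertices of $N_1,\ldots,N_u$ as $e_i$. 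The cyclic part of Claim~\ref{claim: hypergraph contr}, whose proof places no restriction on the starting label, gives $e_j'=e_{(j+r_2 n_1)\bmod \lambda N n_1}'$ for every $j$; that is, the cyclic word $S$ is fixed by the shift by $r_2 n_1$.

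Since the shifts fixing $S$ form a subgroup of $\mathbb{Z}/\lambda N n_1\mathbb{Z}$ containing $r_2 n_1$, they include $P:=\gcd(r_2 n_1,\lambda N n_1)=n_1 g$, where $g:=\gcd(r_2,\lambda N)$; hence $S$ is periodic with period dividing $P$. Cutting the $\lambda N n_1$ cyclic positions into $\lambda N n_1/P=\lambda N/g$ consecutive blocks of length $P$, every edge $e\in E(\mathcal{K}_{n_1,\ldots,n_u})$ occurs the same number $d(e)$ of times in each block, hence $(\lambda N/g)\,d(e)$ times in $S$ altogether. On the other hand, each choice of vertices $v_1\in N_1,\ldots,v_u\in N_u$ lies in exactly $\lambda N'$ edges of $\lambda\mathcal{K}_{n_1,\ldots,n_k}$, so each $e$ occurs $\lambda N'$ times in $S$. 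Equating the two counts gives $d(e)=gN'/N$, and since $n_1=\cdots=n_u$ forces $N=\prod_{i=2}^k n_i=n_1^{u-1}N'$, we obtain $d(e)=g/n_1^{u-1}$ for every edge $e$.

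Finally, if $r_2=0$ then $n_1^{u-1}\mid r_2$ trivially; otherwise $g=\gcd(r_2,\lambda N)\geq 1$, so $d(e)=g/n_1^{u-1}$ is a positive integer, whence $n_1^{u-1}\mid g$, and since $g\mid r_2$ this gives $n_1^{u-1}\mid r_2$, as required. The one step requiring real care is the first: one must check that in the cyclic case the argument behind Claim~\ref{claim: hypergraph contr} genuinely supplies the shift invariance of $S$ for \emph{every} starting position. This is exactly the point where the cyclic case is cleaner than the non-cyclic analogue of Lemma~\ref{lem: hypergraph necessary condition}, where only the first and last $bn_1$ edges of $S$ are forced to agree; everything after it is routine counting and divisibility.
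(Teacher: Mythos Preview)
Your argument is correct and follows essentially the same approach as the paper: both use the cyclic version of Claim~\ref{claim: hypergraph contr} to deduce that the projected sequence $S$ is invariant under the cyclic shift by $r_2n_1$, and then conclude that every edge of $\mathcal{K}_{n_1,\ldots,n_u}$ occurs the same number of times in a fundamental block, from which the divisibility $n_1^{u-1}\mid r_2$ falls out. The only cosmetic difference is that the paper unfolds the sequence $y$ times (choosing $x,y$ with $xr_2=y\lambda N$) so that the block length $r_2n_1$ divides the total length exactly and then uses $\sum_e d(e)=r_2n_1$, whereas you stay with the original sequence, pass to the finer period $P=n_1\gcd(r_2,\lambda N)$, and read off $d(e)=g/n_1^{u-1}$ directly from the global count $\lambda N'$; both routes are standard and yield the same conclusion.
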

\begin{proof}
Let $\ell$ be an ordering of $\lambda\mathcal{K}_{n_1,\ldots, n_k}$
such that $cms_r(\ell)= rn_1$.
Let $x$ and $y$ be integers satisfying $x r_2 = y \lambda N$.
Write $S_{y\ell}(\lambda\mathcal{K}_{n_1,\ldots , n_k}) = e_0, \ldots, e_{y\lambda N n_1-1}$
and let $S = e_0' , \ldots, e_{y \lambda Nn_1 -1}'$ be the sequence of
edges from $E(\mathcal{K}_{n_1,\ldots , n_u})$ such that,
if $e_{i}$ is incident with each of $v_1 \in V_1, \ldots, v_u \in V_u$,
then $e'_{i}$ is the edge in $E(\mathcal{K}_{n_1,\ldots , n_u})$
incident with each of $v_1, \ldots, v_u$.
For an edge $e \in E(\mathcal{K}_{n_1,\ldots, n_u})$
let $d(e)$ be the number of times that $e$ appears among the first $r_2 n_1$ edges of~$S$.

We count in two ways the number of times that
an edge $e \in E(\mathcal{K}_{n_1,\ldots , n_u})$ appears in~$S$.
For all $j$,
Claim~\ref{claim: hypergraph contr} implies that
edges $e_j$ and $e_{j'}$
are incident with the same vertex in $N_i$ for $i=1, \ldots, u$,
where $j':=(r_2 n_1+j)\!\!\mod{\lambda N n_1}$.
So, $e'_j=e'_{j'}$ for all $j \in [y\lambda Nn_1-r_2n_1]$.
Therefore, each edge $e \in E(\mathcal{K}_{n_1,\ldots, n_u})$
appears $x d(e)$ times in the sequence $S$, as $x r_2 = y \lambda N$.
On the other hand, $e \in E(\mathcal{K}_{n_1, \ldots, n_u})$ appears
$\lambda N'$ times in the sequence $S_{\lambda}(\mathcal{H})$,
as $\ell$ is an ordering of $\lambda\mathcal{K}_{n_1,\ldots, n_k}$.
Thus, $e$~appears $y \lambda N'$ times in the sequence~$S$.
Therefore,
$x d(e) = y \lambda N'$ for all $e \in E(\mathcal{K}_{n_1,\ldots , n_u})$,
and $d(e)$ is therefore constant.
By (\ref{eqn: rn sum}),
$d(e) n_1^u = r_2 n_1$;
hence, $n_1^{u-1} \mid r_2$.
\end{proof}

\section{Proof of Theorem~\ref{thm: hypergraph gen r}: Part II}\label{sec: Proof of the inequalities}

The next lemma required for the proof of Theorem~\ref{thm: hypergraph gen r}
is Lemma~\ref{lem: sequity gen case} below.
Before presenting this lemma, however,
let us first introduce notation used in this section and the next.

Recall that the representation of an integer $x$ in base $m$
is $x = (x_l, \ldots, x_0)_m$,
where $x = \sum_{i=0}^{l} x_i m^i$
and $x_i \in \mathbb{Z}_{m}$ for all~$i$.
We consider the following generalisation of this representation.
Let $m_1, \ldots , m_{k}$ be arbitrary positive integers
and set $M := \prod_{i=2}^{k}m_i$.
The representation of each integer $x \in \mathbb{Z}_{M}$ in base
$\overline{m} := (m_1, \ldots, m_k)$
is the
$k$-vector
$\langle x \rangle_{\overline{m}} := (0,x_{2}, \ldots , x_{k}) \in \{0\} \times \prod_{i=2}^{k}\mathbb{Z}_{m_i}$
that satisfies
\begin{equation}\label{eqn: rep of int}
x = \sum_{i=2}^{k} \Biggl( x_{i} \prod_{j=i+1}^{k} m_j \Biggr)\,.
\end{equation}
By the following lemma,
this representation is indeed well defined.
Note that the $0$ in the first coordinate is technically useful
as it will align with notation used later in the paper.

\begin{lemma}\label{lem: rep of int prop}
The representation $\langle x \rangle_{\overline{m}} := (0,x_{2}, \ldots, x_{k})$
of each $x\in \mathbb{Z}_M$ exists and is unique.
Furthermore,
$\langle x+1 \rangle_{\overline{m}} = (0,x_{2},\ldots ,x_{t-1} , x_{t}+1 \ldots , x_k+1)_{\overline{m}}$
for some $2 \leq t \leq k$.
\end{lemma}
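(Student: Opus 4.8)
The plan is to prove Lemma~\ref{lem: rep of int prop} directly from the defining equation~\eqref{eqn: rep of int}, treating the ``existence and uniqueness'' and the ``successor formula'' as two separate tasks. For existence and uniqueness, first I would observe that the map $(x_2,\ldots,x_k)\mapsto \sum_{i=2}^k x_i\prod_{j=i+1}^k m_j$ from $\prod_{i=2}^k\mathbb{Z}_{m_i}$ to $\mathbb{Z}$ takes values in $\{0,1,\ldots,M-1\}$, since each partial sum is bounded: setting $P_i:=\prod_{j=i+1}^k m_j$ (so $P_k=1$ and $P_{i-1}=m_i P_i$), one has $\sum_{i=t}^k x_i P_i \le \sum_{i=t}^k (m_i-1)P_i = P_{t-1}-P_k \le P_{t-1}-1$ by a telescoping argument, and for $t=2$ this gives a value at most $M-1$. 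Since the domain and codomain both have exactly $M$ elements, it suffices to show the map is injective, which is the standard mixed-radix argument: if two representations agree in coordinates $2,\ldots,i-1$ but differ at coordinate $i$, then the difference of the two sums is a nonzero multiple of $P_i$ that is bounded in absolute value by $\sum_{j=i}^k(m_j-1)P_j \le P_{i-1}-1 < P_{i-1}=m_iP_i$, which forces the coefficient of $P_i$ in the difference to be zero, a contradiction. Hence the map is a bijection and $\langle x\rangle_{\overline m}$ is well defined.

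For the successor formula, I would let $\langle x\rangle_{\overline m}=(0,x_2,\ldots,x_k)$ and let $t$ be the largest index in $\{2,\ldots,k\}$ with $x_t < m_t-1$ if such an index exists (if $x$ has all later digits maxed out handle it below), so that $x_{t+1}=\cdots=x_k=m_k-1=\cdots$, i.e.\ $x_j=m_j-1$ for $t<j\le k$. Then using $\sum_{j=t+1}^k (m_j-1)P_j = P_t-1$ one computes
\[
x+1 = \sum_{i=2}^{t-1}x_iP_i + x_tP_t + \sum_{j=t+1}^k(m_j-1)P_j + 1 = \sum_{i=2}^{t-1}x_iP_i + (x_t+1)P_t,
\]
and since $x_t+1\le m_t-1$ this is precisely the representation with digits $(0,x_2,\ldots,x_{t-1},x_t+1,m_{t+1}-1+1-m_{t+1}+\ldots)$—more carefully, the new digit in position $t$ is $x_t+1$ and the new digits in positions $t+1,\ldots,k$ are all $0$, which by uniqueness must equal $\langle x+1\rangle_{\overline m}$. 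Reconciling this with the statement as written: the claimed form $(0,x_2,\ldots,x_{t-1},x_t+1,\ldots,x_k+1)_{\overline m}$ means ``coordinates $2,\ldots,t-1$ unchanged, coordinates $t,\ldots,k$ incremented by $1$'', and since each of $x_{t+1},\ldots,x_k$ equals $m_j-1$, incrementing them ``$+1$'' is understood modulo $m_j$, yielding $0$ — so the two descriptions coincide. I would phrase this carefully to match the paper's intended reading.

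The one genuinely missing case is $x=M-1$, where all digits are maximal and $x+1=M\equiv 0$, whose representation is the all-zero vector; this is covered by taking the successor formula modulo $M$ (consistent with the ambient $\mathbb{Z}_M$), and I would note it explicitly so the statement is literally correct for every $x\in\mathbb{Z}_M$.

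The main obstacle, such as it is, will be notational rather than mathematical: making the telescoping bound $\sum_{i=t}^k(m_i-1)P_i = P_{t-1}-1$ clean (it follows from $(m_i-1)P_i = m_iP_i - P_i = P_{i-1}-P_i$ and collapsing the sum), and stating the successor formula in a way that unambiguously matches the paper's compressed notation ``$x_t+1,\ldots,x_k+1$'' — in particular clarifying that the trailing increments are reductions to $0$ because those digits were already saturated. Everything else is the routine mixed-radix-representation argument.
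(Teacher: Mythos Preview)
Your proposal is correct and follows essentially the same route as the paper: a counting/bijection argument for existence and uniqueness, followed by the telescoping identity $\sum_{j>t}(m_j-1)P_j = P_t-1$ to handle the carry in the successor formula, with the $x=M-1$ case treated separately. The only minor difference is that the paper establishes uniqueness by determining the digits recursively via congruences modulo $\prod_{i\ge l} m_i$, whereas you argue injectivity by bounding the tail of the difference; both are standard and equally short, so this is not a substantive divergence.
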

\begin{proof}
Let $x \in \mathbb{Z}_M$ be an integer with representation
$\langle x \rangle_{\overline{m}} = (0,x_{2},\ldots , x_k)$.
Clearly, $x_k \equiv x \pmod{m_k}$.
Suppose, by induction, that $x_{l+1}, \ldots, x_k$ are uniquely determined by $x$.
Then,
as $x\equiv \sum_{i=l}^{k}x_i\prod_{j=i+1}^{k} m_j\pmod{\prod_{i=l}^{k} m_i}$
for any $2 \leq l \leq k$,
we can determine $x_l$ uniquely given $x$ and $x_{l+1}, \ldots, x_k$.
Thus, if an integer in $\mathbb{Z}_M$ has a representation $\langle x \rangle_{\overline{m}}$,
then it is unique.
As there are $M$ $k$-tuples,
each of which represents an integer satisfying (\ref{eqn: rep of int}),
every integer in $\mathbb{Z}_M$ has a unique representation as a $k$-tuple.

If $x_k \neq m_k-1$,
then clearly $ \langle x+1 \rangle = (0,x_{2},\ldots ,x_{k-1}, x_k+1)$, as required.
Otherwise,
let $t'$ be the smallest positive integer such that
$x_j = m_j-1$ for all $ t' < j \leq  k$.
Then
$x = \sum_{i=2}^{t'}(x_i \prod_{j=i+1}^{k}m_j) +
     \sum_{i=t'+1}^{k}\bigl((m_i-1) \prod_{j=i+1}^{k}m_j\bigr)$,
and so
\begin{align*}
x+1
&=
\sum_{i=2}^{t'} \Biggl( x_i \prod_{j=i+1}^{k}m_j \Biggr)+
\sum_{i=t'+1}^{k} \prod_{j=i}^{k}m_j -
\sum_{i=t'+1}^{k} \Biggl( \prod_{j=i+1}^{k}m_j \Biggr) + 1 \\
&=
\sum_{i=2}^{t'}\Biggl(x_i \prod_{j=i+1}^{k}m_j \Biggr) + \prod_{j=t'+1}^{k}m_j \,.
\end{align*}
Hence,
$x+1 = M$ if $t' = 1$,
and, if $t' \geq 2$, then
\[
  x+1 = \sum_{i=2}^{t'-1}\Biggl(x_i \prod_{j=i+1}^{k}m_j \Biggr) +(x_t'+1) \prod_{j=t'+1}^{k}m_j\,.
\]
Thus,
$\langle x+1 \rangle = \langle M \rangle_{\overline{m}} = \langle 0 \rangle_{\overline{m}} =(0,x_{2}+1, \ldots, x_k+1)$
when $t'=1$
and, when $t' \geq 2$,
$\langle x+1 \rangle_{\overline{m}}
= (0,x_{2}, \ldots, x_{t'-1},x_{t'}+1,\ldots, x_k+1)$.
In particular,
$\langle x+1 \rangle_{\overline{m}} = (0,x_{2}, \ldots, x_{t-1},x_{t}+1,\ldots, x_k+1)$
for some $t$,
namely $t = t'$ if $t' \geq 2$, and $t = 2$ if $t'=1$.
\end{proof}

\begin{lemma}\label{lem: sequity gen case}
For all $1 \leq n_1 \leq n_2 \leq \cdots \leq n_k$ and $r,\lambda \geq 1$,
\[
rn_1-1 \leq cms_r(\lambda \mathcal{K}_{n_1,\ldots,n_k})
\leq ms_r(\lambda \mathcal{K}_{n_1,\ldots,n_k}) \leq rn_1 \,.
\]
\end{lemma}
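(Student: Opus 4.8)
The plan is to establish the two nontrivial inequalities separately, since the rightmost inequality $ms_r(\lambda\mathcal{K}_{n_1,\ldots,n_k})\le rn_1$ was already observed in the paragraph preceding Claim~\ref{claim: hypergraph contr}, and the middle inequality $cms_r\le ms_r$ holds for all hypergraphs by the discussion in Subsection~\ref{subsec:Nontriv defs}. So the only real content is the lower bound $cms_r(\lambda\mathcal{K}_{n_1,\ldots,n_k})\ge rn_1-1$, and by Lemma~\ref{lem: multi from non-multi} it suffices to prove $cms_r(\mathcal{K}_{n_1,\ldots,n_k})\ge rn_1-1$ in the case $\lambda=1$. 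Moreover, by Lemma~\ref{lem: seqity gen r} applied with $\Delta=N=\prod_{i=2}^kn_i$, it is enough to handle the ``base'' range $1\le r\le N$: indeed if $r=a N+b$ with $b\in[N]$ then $cms_r\ge a\varepsilon + cms_b \ge aNn_1 + (bn_1-1) = rn_1-1$, where $\varepsilon=Nn_1$; one checks the degenerate case $b=0$ (i.e.\ $N\mid r$) directly, where $cms_r=\varepsilon=rn_1$ trivially. Thus the crux is: for $1\le r\le N$, exhibit an ordering of $\mathcal{K}_{n_1,\ldots,n_k}$ in which every $rn_1-1$ cyclically consecutive edges form a subhypergraph of maximum degree at most $r$.

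To build such an ordering I would use Proposition~\ref{prop: Matching decomposition} together with the base-$\overline{m}$ representation machinery of Lemma~\ref{lem: rep of int prop}. The natural matching decomposition of $\mathcal{K}_{n_1,\ldots,n_k}$ indexes matchings by tuples: fixing, for each vertex $v_1$ of $N_1$, a ``shift vector'' in $\prod_{i=2}^k\mathbb{Z}_{n_i}$ determines a perfect matching on the remaining parts, and letting the shifts run over all of $\mathbb{Z}_N$ (read through $\langle\cdot\rangle_{\overline{m}}$ with $\overline m=(n_1,\ldots,n_k)$) gives $N$ matchings, each with $n_1$ edges, that partition $E$. Here $t = N$, $n=n_1$, and one wants $\gcd(N,r)=d$ for the appropriate $d$; writing $N=dc$ with the matchings re-indexed as $\mathcal{M}_{i,j}$, $i\in[d]$, $j\in[c]$, by the cyclic shift structure consecutive matchings $\mathcal{M}_{i,j},\mathcal{M}_{i,j+1}$ differ by incrementing the representation $\langle\cdot\rangle_{\overline m}$, and Lemma~\ref{lem: rep of int prop} controls exactly which coordinates change. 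The key local estimate is then $ms(\ell_{i,j},\ell_{i,j+1})\ge n_1-1$ for suitable orderings $\ell_{i,j}$ of the individual matchings: one must order the $n_1$ edges of each matching so that the last few edges of $\mathcal{M}_{i,j}$ and the first few edges of $\mathcal{M}_{i,j+1}$ overlap in degree at most $1$ at every vertex outside $N_1$ — and the vertices of $N_1$ are automatically fine since each matching uses all of $N_1$ exactly once. Feeding this into Proposition~\ref{prop: Matching decomposition} with $x=1$ yields $cms_r(\mathcal{K}_{n_1,\ldots,n_k})\ge rn_1-1$.

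The main obstacle is arranging the gcd condition $\gcd(N,r)=d$ and simultaneously choosing the per-matching orderings $\ell_{i,j}$ so that the cross-boundary bound $ms(\ell_{i,j},\ell_{i,j+1})\ge n_1-1$ actually holds for \emph{every} pair, uniformly across all the ``carry'' patterns produced by incrementing $\langle\cdot\rangle_{\overline m}$ in Lemma~\ref{lem: rep of int prop}. When the representation increments without carrying (only the last coordinate changes), the two matchings differ by a single cyclic rotation in the $N_k$-direction and a judicious ordering of each matching's $n_1$ edges keeps the overlap a matching; when a long carry occurs (coordinates $x_t,\ldots,x_k$ all change), one needs the ordering to have been chosen so that the ``wrap'' edges still collide in at most one coordinate-slot per vertex. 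I expect this to be a somewhat delicate but essentially combinatorial bookkeeping argument: define $\ell_{i,j}$ by listing the edges of $\mathcal{M}_{i,j}$ according to the $N_1$-vertex order, and verify directly from~\eqref{eqn: rep of int} that the last $s$ edges of one block and the first $s$ edges of the next are supported on disjoint vertex-sets outside $N_1$ whenever $s\le n_1-1$. The remaining pieces — reducing $\lambda$ away, reducing $r$ into $[N]$, and assembling the chain of inequalities — are routine given the lemmas already proved.
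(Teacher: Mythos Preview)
Your plan matches the paper's proof: reduce to $\lambda=1$ and $r<N$ (this is Claim~\ref{claim: reduction of cases gen case}), then apply Proposition~\ref{prop: Matching decomposition} to a matching decomposition indexed by $[d]\times[N/d]$ with $d=\gcd(N,r)$, using the cross-boundary estimate $ms(\ell_{i,j},\ell_{i,j+1})\ge n_1-1$ (Lemma~\ref{lem: cyclic $k$-partite gcd not 1}) and the carry description of Lemma~\ref{lem: rep of int prop}. The one point you flag as the main obstacle is resolved in the paper not by a bare re-indexing of a single base-$\overline{n}$ family of matchings (which would break at the wrap $j=c-1\to 0$, since that step is not an increment in $\langle\cdot\rangle_{\overline{n}}$), but by first factoring $d=\prod_{i=2}^k m_i$ with $m_i\mid n_i$ and writing each part as $N_i=\mathbb{Z}_{m_i}\times\mathbb{Z}_{n_i/m_i}$: the $i$-index then lives in base $\overline{m}$ and the $j$-index in base $\overline{n/m}$, so that $j\to j+1$ is always a genuine increment in $\langle\cdot\rangle_{\overline{n/m}}$, cyclically, and Lemma~\ref{lem: rep of int prop} applies uniformly (this is the content of Claim~\ref{claim: matching decomp gen case} and Lemma~\ref{lem: cyclic $k$-partite gcd not 1}).
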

\noindent
To prove Lemma~\ref{lem: sequity gen case},
we need only consider cases, according to the following claim.
\begin{claim}\label{claim: reduction of cases gen case}
If Lemma~\ref{lem: sequity gen case} is true
for all $1 \leq r < N$ and $\lambda =1$,
then Lemma~\ref{lem: sequity gen case}
is true for all $r,\lambda  \geq 1$.
\end{claim}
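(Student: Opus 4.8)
\textbf{Proof proposal for Claim~\ref{claim: reduction of cases gen case}.}
The plan is to reduce Lemma~\ref{lem: sequity gen case} to its lower bound alone, and then to reduce that lower bound from arbitrary $r,\lambda$ to the case $\lambda=1$, $1\le r<N$. Indeed, the two upper bounds $cms_r(\lambda\mathcal{K}_{n_1,\ldots,n_k})\le ms_r(\lambda\mathcal{K}_{n_1,\ldots,n_k})\le rn_1$ already hold in full generality: the first is established in Subsection~\ref{subsec:Nontriv defs}, and the second is noted for all $r$ and $\lambda$ in Section~\ref{sec: outline and reduction of thm}. So it suffices to prove $cms_r(\lambda\mathcal{K}_{n_1,\ldots,n_k})\ge rn_1-1$ for all $r,\lambda\ge 1$, assuming $cms_r(\mathcal{K}_{n_1,\ldots,n_k})\ge rn_1-1$ for all $1\le r<N$. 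By Lemma~\ref{lem: multi from non-multi}, $cms_r(\lambda\mathcal{K}_{n_1,\ldots,n_k})\ge cms_r(\mathcal{K}_{n_1,\ldots,n_k})$, so we may take $\lambda=1$ from now on; it remains only to handle $r\ge N$.

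For this I would apply Lemma~\ref{lem: seqity gen r} to $\mathcal{H}=\mathcal{K}_{n_1,\ldots,n_k}$. This hypergraph has $\varepsilon=n_1N$ edges, and since $n_1\le n_i$ for every $i$ its maximum degree is $\Delta=N=\prod_{i=2}^{k}n_i$, attained at the vertices of $N_1$. Writing $r=aN+b$ with $a\ge 0$ and $b\in[N]$, Lemma~\ref{lem: seqity gen r} gives
\[
cms_r(\mathcal{K}_{n_1,\ldots,n_k})\ \ge\ a\varepsilon+cms_b(\mathcal{K}_{n_1,\ldots,n_k})\ =\ an_1N+cms_b(\mathcal{K}_{n_1,\ldots,n_k}).
\]
If $b=0$, then $r=aN$ and $cms_0(\mathcal{K}_{n_1,\ldots,n_k})=0$ (a single edge already contains a vertex of degree $1$), so $cms_r(\mathcal{K}_{n_1,\ldots,n_k})\ge an_1N=rn_1\ge rn_1-1$. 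If $1\le b<N$, then the hypothesis of the claim applies to $b$, giving $cms_b(\mathcal{K}_{n_1,\ldots,n_k})\ge bn_1-1$, and hence
\[
cms_r(\mathcal{K}_{n_1,\ldots,n_k})\ \ge\ an_1N+bn_1-1\ =\ (aN+b)n_1-1\ =\ rn_1-1.
\]
Combining both cases and then Lemma~\ref{lem: multi from non-multi} yields the claim.

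This argument is essentially bookkeeping built on Lemmas~\ref{lem: seqity gen r} and~\ref{lem: multi from non-multi}, so I do not expect a genuine obstacle here; the only points requiring care are computing $\Delta(\mathcal{K}_{n_1,\ldots,n_k})=N$ correctly, so that the parameter ``$b$'' of Lemma~\ref{lem: seqity gen r} ranges over exactly $[N]$ and matches the hypothesis's range $1\le r<N$, and isolating the degenerate residue $b=0$, where $cms_0=0$ but the target inequality still holds (indeed with one to spare).
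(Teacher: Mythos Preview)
Your proof is correct and follows essentially the same approach as the paper: both reduce to the lower bound on $cms_r$, apply Lemma~\ref{lem: seqity gen r} with $\Delta(\mathcal{K}_{n_1,\ldots,n_k})=N$ to reduce modulo $N$, and invoke Lemma~\ref{lem: multi from non-multi} to pass from $\lambda=1$ to general $\lambda$. You are slightly more careful in isolating the residue $b=0$ (where the hypothesis on $1\le r<N$ does not literally apply), which the paper glosses over; otherwise the arguments are the same.
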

\begin{proof}
\noindent
Suppose that Lemma~\ref{lem: sequity gen case} is true for all $r < N$ and $\lambda =1$.
Write $r$ as $r = r_1 N +r_2$
Then,
\[
  cms_r(\mathcal{H}) \geq r_1 n_1 N +
  cms_{r_2}(\mathcal{H}) \geq r_1 n_1 N + r_2 n_1 +1 = rn_1-1\,,
\]
by Lemma~\ref{lem: seqity gen r}.
Thus, by Lemma~\ref{lem: multi from non-multi},
$cms_r(\lambda \mathcal{H}) \geq r n_1 -1$ for all $\lambda \geq 1$,
and we can conclude that
$rn_1 - 1 \leq cms_r(\lambda \mathcal{K}_{n_1,\ldots,n_k})
          \leq  ms_r(\lambda \mathcal{K}_{n_1,\ldots,n_k}) \leq rn_1$,
as the two upper inequalities are trivially true.
\end{proof}
\noindent
To prove Lemma~\ref{lem: sequity gen case},
it therefore suffices to consider $\mathcal{K}_{n_1,\ldots,n_k}$.
More notation is however needed, so
let $d$ be a positive factor of $N$,
and let
$m_1,\ldots , m_k$ be integers satisfying $d = \prod_{i=2}^{k}m_i$
where $m_1=n_1$ and $m_i \mid n_i$ for all $2 \leq i \leq~k$.
Define $N_i := \mathbb{Z}_{m_i}\times \mathbb{Z}_{n_i/m_i}$ for $1 \leq i \leq k$,
$\overline{m} := (m_1, \ldots, m_k)$ and
$\overline{n/m} := (n_1/m_1, \ldots, n_k/m_k)$.
Without loss of generality, we can identify the edges of $\mathcal{K}_{n_1,\ldots,n_k}$
with the elements of $\mathcal{N}:= \prod_{i=1}^{k} N_i$;
in particular, each edge of $\mathcal{K}_{n_1,\ldots,n_k}$ is identified
with a vector
$\bigl((x_1,y_1),\ldots, (x_k,y_k)\bigr)_{\overline{m},\overline{n/m}}$.
The sum of two elements $((x_1,y_1),\ldots, (x_k,y_k))$, $((x'_1,y'_1),\ldots, (x'_k,y'_k)) \in \mathcal{N}$
is defined as $\bigl((x_1+x_1', y_{1}+y_{1}'), \ldots , (x_k+x_k', y_{k}+y_{k}')\bigr)_{\overline{m},\overline{n/m}}$.
The difference of two such elements is defined analogously.

For integers $x \in \mathbb{Z}_d$ and $y \in \mathbb{Z}_{N/d}$,
define $\langle (x,y) \rangle_{\overline{m},\overline{n/m}}
:= ((0,0),(x_2,y_2) \ldots ,\\(x_k,y_k))_{\overline{m},\overline{n/m}}$,
where
$\langle x \rangle_{\overline{m}}   = (0 ,x_2, \ldots, x_k)_{\overline{m}}$ and
$\langle y \rangle_{\overline{n/m}} = (0 ,y_2, \ldots, y_k)_{\overline{n/m}}$.
Also, for each integer $x \in [n_1]$,
define
$\langle x^* \rangle_{\overline{m},\overline{n/m}}
:= \bigl( (x_{1,1},x_{1,2}), \ldots , (x_{k,1},x_{k,2})\bigr)_{\overline{m},\overline{n/m}}$,
where
$x_{i,1} \in [m_i]$ and $x_{i,2} \in [\frac{n_i}{m_i}]$ satisfy
$x = x_{i,1}\frac{n_i}{m_i} + x_{i,2}$ for all $1 \leq i \leq k$.
It is easily checked that each $x_{i,j}$ is uniquely determined by~$x$,
and so $\langle x^* \rangle_{\overline{m},\overline{n/m}}$ is well defined.
Note that the first entry of $\langle x^* \rangle_{\overline{m},\overline{n/m}}$
is not necessarily equal to $(0,0)$.
The subscript $\overline{m},\overline{n/m}$ will be omitted if the context is implicitly clear.

For $i \in [d]$ and $j \in [\frac{N}{d}]$,
define
$
  \mathcal{M}_{i,j} := \bigl\lbrace \langle x^* \rangle_{\overline{m},\overline{n/m}} + \langle (i,j) \rangle_{\overline{m},\overline{n/m}} \;:\; x \in [n_1] \bigr\rbrace \,.
$
\begin{claim}\label{claim: matching decomp gen case}
The set $\bigl\lbrace \mathcal{M}_{i,j}\;:\; i \in [d], j \in [\frac{N}{d}] \bigr\rbrace$ is a matching
decomposition of $\mathcal{K}_{n_1,\ldots, n_k}$.
\end{claim}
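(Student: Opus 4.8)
The plan is to recast the proposed decomposition as a single explicit bijection and then read off both halves of the definition of a matching decomposition from it. Let $\phi\colon [n_1]\times[d]\times[\frac{N}{d}]\to\mathcal{N}$ be given by $\phi(x,i,j):=\langle x^*\rangle_{\overline{m},\overline{n/m}}+\langle(i,j)\rangle_{\overline{m},\overline{n/m}}$, so that $\mathcal{M}_{i,j}=\{\phi(x,i,j):x\in[n_1]\}$. Since $d\mid N$, the domain of $\phi$ has $n_1\cdot d\cdot\frac{N}{d}=n_1N=\prod_{i=1}^{k}n_i=|E(\mathcal{K}_{n_1,\ldots,n_k})|=|\mathcal{N}|$ elements, so it suffices to prove that $\phi$ is a bijection and that each $\mathcal{M}_{i,j}$ is a matching: the former says every edge $e$ equals $\phi(x,i,j)$ for a unique triple, so $e$ lies in $\mathcal{M}_{i,j}$ and in no other $\mathcal{M}_{i',j'}$, and combined with the latter this is exactly the statement of the claim.

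For the matching property and for injectivity, the key feature of the construction is that $m_1=n_1$, so $n_1/m_1=1$, $N_1=\mathbb{Z}_{n_1}\times\mathbb{Z}_1$, and the defining relation $x=x_{1,1}\frac{n_1}{m_1}+x_{1,2}$ forces the first coordinate of $\langle x^*\rangle$ to equal $(x,0)$; since the first coordinate of $\langle(i,j)\rangle$ is $(0,0)$ by definition, the first coordinate of $\phi(x,i,j)$ is $(x,0)$, which already recovers $x$. I would then observe that for every index $l$ the map $x\mapsto(x_{l,1},x_{l,2})$ is injective on $[n_1]$, because $(x_{l,1},x_{l,2})$ determines $x=x_{l,1}\frac{n_l}{m_l}+x_{l,2}$. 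Hence for distinct $x,x'\in[n_1]$ the vectors $\langle x^*\rangle$ and $\langle{x'}^{*}\rangle$ differ in every coordinate, and since adding $\langle(i,j)\rangle$ acts as a bijection on each part $N_l$, so do $\phi(x,i,j)$ and $\phi(x',i,j)$; in a $k$-partite $k$-graph, differing in every coordinate is the same as being vertex-disjoint, so $\mathcal{M}_{i,j}$, a set of $n_1$ distinct edges, is a matching. For injectivity of $\phi$ in general, if $\phi(x,i,j)=\phi(x',i',j')$ then $x=x'$ by the first coordinate, hence $\langle(i,j)\rangle=\langle(i',j')\rangle$, hence $i_l=i'_l$ and $j_l=j'_l$ for all $l$, and therefore $i=i'$ and $j=j'$ by the uniqueness part of Lemma~\ref{lem: rep of int prop}.

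Finally I would verify surjectivity of $\phi$ explicitly, which also exhibits its inverse (in view of the cardinality count above it is in any case sufficient): given an edge $e=((x_1,y_1),\ldots,(x_k,y_k))\in\mathcal{N}$ (so $y_1=0$), set $x:=x_1\in[n_1]$, write $\langle x^*\rangle=((x_{1,1},x_{1,2}),\ldots,(x_{k,1},x_{k,2}))$, and for $l=2,\ldots,k$ put $i_l:=x_l-x_{l,1}\in\mathbb{Z}_{m_l}$ and $j_l:=y_l-x_{l,2}\in\mathbb{Z}_{n_l/m_l}$. By Lemma~\ref{lem: rep of int prop}, applied with the bases $\overline{m}$ and $\overline{n/m}$ (for which $\prod_{i=2}^{k}m_i=d$ and $\prod_{i=2}^{k}\frac{n_i}{m_i}=\frac{N}{d}$), there are unique $i\in\mathbb{Z}_d$ and $j\in\mathbb{Z}_{N/d}$ with $\langle i\rangle_{\overline{m}}=(0,i_2,\ldots,i_k)$ and $\langle j\rangle_{\overline{n/m}}=(0,j_2,\ldots,j_k)$, whence $\langle(i,j)\rangle=((0,0),(i_2,j_2),\ldots,(i_k,j_k))$ and $\langle x^*\rangle+\langle(i,j)\rangle=e$, i.e.\ $e=\phi(x,i,j)$. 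I do not expect a genuine obstacle here: the only nontrivial external input is Lemma~\ref{lem: rep of int prop}, and the rest is careful bookkeeping of the two base systems and of the privileged index $1$, where the collapse $n_1/m_1=1$ of the second factor of $N_1$ is precisely what lets the first coordinate of $\phi(x,i,j)$ pin down $x$.
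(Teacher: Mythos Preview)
Your proof is correct and follows essentially the same approach as the paper: both arguments show each $\mathcal{M}_{i,j}$ is a matching via the injectivity of $x\mapsto(x_{l,1},x_{l,2})$, use the first coordinate (where $m_1=n_1$ forces $n_1/m_1=1$) to recover $x$, and then cancel to obtain $(i,j)=(i',j')$, with a cardinality count replacing explicit surjectivity. Your packaging as a single bijection $\phi$ is a tidy reformulation, and the added explicit surjectivity argument is redundant but harmless.
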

\begin{proof}
We first check that each $\mathcal{M}_{i,j}$ is a matching.
Let
$\langle x^* \rangle = \bigl((x_{1,1},x_{1,2}), \ldots, \\(x_{k,1},x_{k,2})\bigr)$,
$\langle y^* \rangle = \bigl((y_{1,1},y_{1,2}), \ldots, (y_{k,1},y_{k,2})\bigr)$
and
$\langle (i,j) \rangle = \bigl((i_{1},j_1), \ldots, (i_k,j_k)\bigr)$
for distinct $x,y \in [n_1]$.
Suppose that the edges
$\langle x^* \rangle +\langle (i,j) \rangle$ and
$\langle y^* \rangle +\langle (i,j) \rangle$ in $\mathcal{M}_{i,j}$
have the same $l$-th entry for some $1 \leq l \leq k$; i.e.,
\[
x_{l,1}+i_l \equiv  y_{l,1} + i_l \pmod{m_l}
\qquad \textrm{and} \qquad
x_{l,2}+j_l \equiv
 y_{l,2}+ j_l \text{ }\Bigl(\text{mod }\frac{n_l}{m_l}\Bigr)\,.
 \]
Then $x_{l,1} = y_{l,1}$ and $x_{l,2} = y_{l,2}$.
Hence, $x = x_{l,1}\frac{n_l}{m_l}+x_{l,2} = y_{l,1}\frac{n_l}{m_l}+y_{l,2} = y$,
a contradiction.
Thus, $\mathcal{M}_{i,j}$ is a matching for all $i \in [d], j \in [\frac{N}{d}]$.

We now verify that the matchings $\mathcal{M}_{i,j}$ for $i \in [d], j \in [\frac{N}{d}]$
partition $E(\mathcal{K}_{n_1,\ldots , n_k})$.
As there are clearly $N$ matchings $\mathcal{M}_{i,j}$,
each containing $n_1$ edges, we need only show that no two distinct
$\mathcal{M}_{i,j}$ and $\mathcal{M}_{i',j'}$ contain a common edge.
Suppose, otherwise, that there are distinct $(i,j), (i',j')$ such that
$\mathcal{M}_{i,j}$ and $\mathcal{M}_{i',j'}$ contain a common edge.
By considering first entries,
it is easy to check that if $\mathcal{M}_{i,j}$ and $\mathcal{M}_{i',j'}$ contain a common edge,
then that edge is of the form
$\langle x^* \rangle +\langle (i,j) \rangle=
\langle x^* \rangle +\langle (i',j') \rangle$
for some $x \in [n_1]$.
Let $\langle x^* \rangle = ( (x_{1,1},x_{1,2}), \ldots , (x_{k,1},x_{k,2}))$,
$\langle (i,j) \rangle = ( (i_{1},j_1), \ldots , (i_k,j_k))$ and
$\langle (i',j') \rangle = ( (i'_{1},j'_1), \ldots , (i'_k,j'_k))$.
Then, by equating the $l$-th entries of  $\langle x^* \rangle +\langle (i,j) \rangle$ and
$\langle x^* \rangle +\langle (i',j') \rangle$, we see that, for $1 \leq l \leq k$,
\[
x_{l,1}+i_l \equiv x_{l,1}+i'_l
\pmod{m_i}
\quad \text{ and } \quad
x_{l,2}+ j_l \equiv x_{l,2}+ j_{l}'
\text{ }\Bigl(\text{mod } \frac{n_i}{m_i}\Bigr)\,.
\]
Then $(i_l,j_l) = (i_l',j_l')$ for all $1 \leq l \leq k$,
and so
\[
  \langle (i,j) \rangle = ( (i_{1},j_1), \ldots , (i_k,j_k))
 = ( (i'_{1},j'_1), \ldots , (i'_k,j'_k)) = \langle (i',j') \rangle\,,
\]
contradicting our assumption that $(i,j) \neq (i',j')$.
Hence, the matchings $\mathcal{M}_{i,j}$ for $i \in [d], j \in [\frac{N}{d}]$
are disjoint and, by the number of their edges, partition
$E(\mathcal{K}_{n_1,\ldots , n_k})$.
\end{proof}
Let $\ell_{i,j}$ be the ordering
of $\mathcal{M}_{i,j}$ defined by
$\ell_{i,j}\left(\langle x^* \rangle + \langle (i,j) \rangle \right) = x$ for all $x \in [n_1]$,
and set $\ell_{i,\frac{N}{d}}:= \ell_{i,0}$ and (thus)
$\mathcal{M}_{i,\frac{N}{d}}:= \mathcal{M}_{i,0}$ for all $i \in [d]$.
\begin{lemma}\label{lem: cyclic $k$-partite gcd not 1}
For all
$i \in [d]$ and $j \in [\frac{N}{d}]$,
$ms(\ell_{i,j},\ell_{i,j+1}) \geq n_1-1$.
\end{lemma}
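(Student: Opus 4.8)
The plan is to show that the concatenation $\ell_{i,j}\vee_{s}\ell_{i,j+1}$ has maximum degree at most $1$ for $s=n_1-1$; i.e., any $n_1-1$ consecutive edges in the sequence
$S_{\ell_{i,j}}(\mathcal{M}_{i,j})\vee S_{\ell_{i,j+1}}(\mathcal{M}_{i,j+1})$ form a matching. Since each $\mathcal{M}_{i,j}$ is itself a matching by Claim~\ref{claim: matching decomp gen case}, the only sequences of $n_1-1$ consecutive edges that could fail to be a matching are those straddling the join: namely, for some $p$ with $1\le p\le n_1-1$, the last $p$ edges of $\ell_{i,j}$ together with the first $n_1-1-p$ edges of $\ell_{i,j+1}$. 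By the definition of $\ell_{i,j}$, the last $p$ edges of $\ell_{i,j}$ are $\langle x^*\rangle+\langle(i,j)\rangle$ for $x\in\{n_1-p,\ldots,n_1-1\}$, and the first $n_1-1-p$ edges of $\ell_{i,j+1}$ are $\langle x^*\rangle+\langle(i,j+1)\rangle$ for $x\in\{0,\ldots,n_1-2-p\}$. So I must show these $n_1-1$ edges are pairwise non-adjacent.

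Within each block separately this is immediate (each $\mathcal{M}_{i,j}$ is a matching), so the heart of the matter is to compare an edge $\langle x^*\rangle+\langle(i,j)\rangle$ with $n_1-p\le x\le n_1-1$ against an edge $\langle y^*\rangle+\langle(i,j+1)\rangle$ with $0\le y\le n_1-2-p$, and show they share no entry. Suppose they agree in coordinate $l$. The first coordinates of $\langle(i,j)\rangle$ and $\langle(i,j+1)\rangle$ are both $(0,0)$, so for $l=1$ we would need $\langle x^*\rangle$ and $\langle y^*\rangle$ to agree in the first entry, which forces $x_{1,1}=y_{1,1}$ and $x_{1,2}=y_{1,2}$, hence $x=y$ — impossible since $x> y$. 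For $l\ge 2$, write $\langle(i,j)\rangle=((0,0),(i_2,j_2),\ldots,(i_k,j_k))$ and use Lemma~\ref{lem: rep of int prop}: since $j+1$ follows $j$, we have $\langle j+1\rangle_{\overline{n/m}}=(0,j_2,\ldots,j_{t-1},j_t+1,\ldots,j_k+1)$ for some $t\ge 2$, so $\langle(i,j+1)\rangle$ differs from $\langle(i,j)\rangle$ only in that the second components of coordinates $t,\ldots,k$ are incremented by $1$ modulo $n_l/m_l$, while the first components $i_2,\ldots,i_k$ are unchanged. Thus agreement of the $l$-th entries gives $x_{l,1}=y_{l,1}$ in all cases, and for the second component either $x_{l,2}=y_{l,2}$ (if $l<t$) or $x_{l,2}\equiv y_{l,2}+1\pmod{n_l/m_l}$ (if $l\ge t$).

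In the first case $x=x_{l,1}\frac{n_l}{m_l}+x_{l,2}=y_{l,1}\frac{n_l}{m_l}+y_{l,2}=y$, a contradiction. In the second case I need to rule out $x_{l,2}= y_{l,2}+1$ with no wraparound (wraparound would need $x_{l,2}=0$ and $y_{l,2}=\frac{n_l}{m_l}-1$, giving $x=x_{l,1}\frac{n_l}{m_l}\le y_{l,1}\frac{n_l}{m_l}+\frac{n_l}{m_l}-1 < (x_{l,1}+1)\frac{n_l}{m_l}$, forcing $x=y$ since $x_{l,1}=y_{l,1}$ is impossible — actually here one checks the indices carefully: $x_{l,1}=y_{l,1}$ and the ranges of $x$ and $y$ then force $x\le y$, contradicting $x>y$). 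In the no-wraparound case, $x = y_{l,1}\frac{n_l}{m_l}+y_{l,2}+1 = y+1$, so $x=y+1$; but $x\ge n_1-p$ and $y\le n_1-2-p$ give $x\ge y+2$, a contradiction. Hence in every case we reach a contradiction, so the $n_1-1$ straddling edges form a matching, which proves $ms(\ell_{i,j},\ell_{i,j+1})\ge n_1-1$. The main obstacle is the careful bookkeeping in the second-component cases — distinguishing the plain-increment from the modular-wraparound situation and using the tight range constraint $x\ge y+2$ coming from $p\ge 1$ — but this is a finite case analysis with no deeper difficulty once Lemma~\ref{lem: rep of int prop} is in hand.
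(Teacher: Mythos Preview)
Your proposal is correct and follows essentially the same route as the paper's proof: compare an edge $\langle x^*\rangle+\langle(i,j)\rangle$ from the tail of $\ell_{i,j}$ with an edge $\langle y^*\rangle+\langle(i,j+1)\rangle$ from the head of $\ell_{i,j+1}$, use Lemma~\ref{lem: rep of int prop} to see that $\langle(i,j+1)\rangle$ differs from $\langle(i,j)\rangle$ only by $+1$ in the second components of coordinates $t,\ldots,k$, and then derive $x=y$, $x=y+1$, or $x\le y$ in the three subcases, each contradicting the range constraint $x\ge y+2$. The only cosmetic differences are that the paper folds the $l=1$ case into the $l<t$ case rather than treating it separately, and your wraparound paragraph is a bit rough in its self-correction (the clean statement is simply that $x_{l,1}=y_{l,1}$, $x_{l,2}=0$, $y_{l,2}=\tfrac{n_l}{m_l}-1$ give $x\le y$); neither affects validity.
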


\begin{proof}
Let $\ell = \ell_{i,j} \vee_{n_1-1} \ell_{i,j}$.
Consider a sequence $S$ of $n_1-1$ consecutive edges in $\ell$.
We check that $\mathcal{H}(S)$ is a matching of $\mathcal{K}_{n_1,\ldots, n_k}$.
Let $1 \leq s \leq n_1-2$ be the number of edges in~$S$ which are from $\mathcal{M}_{i,j}$.
There are then $n_1-1-s$ edges in $S$ from $\mathcal{M}_{i,j +1}$,
and the edges in $S$ which are from $\mathcal{M}_{i,j}$ are
$\langle x^*\rangle+\langle (i,j) \rangle$
for $n_1-s \leq a \leq n_1-1$,
and the edges in $S$ from $\mathcal{M}_{i,j+1}$ are
$\langle y^*\rangle+\langle (i,j+1)\rangle$
for $ 0 \leq y \leq n_1-s-2$.
As $\mathcal{M}_{i,j}$ and $\mathcal{M}_{i,j +1}$ are each matchings,
$\mathcal{H}(S)$ is not a matching only if there is an edge
from $\mathcal{M}_{i,j}$ in $S$ and another from $\mathcal{M}_{i,j+1}$ in $S$
that have a common entry.
So, suppose that $\langle x^*\rangle+\langle (i,j)\rangle$
and $\langle y^*\rangle+\langle (i,j+1) \rangle$
have the same $l$-th entry for some $1 \leq l \leq k$, $n_1-s \leq x \leq n_1-1$ and
$0 \leq y \leq n_1-s-2$.
Let $\langle x^* \rangle = \bigl((x_{1,1},x_{1,2}),\ldots, (x_{k,1},x_{k,2})\bigr)$,
$\langle y^* \rangle = \bigl((y_{1,1},y_{1,2}), \ldots, (y_{k,1},y_{k,2})\bigr)$
and $\langle (i,j) \rangle = \bigl((i_{1},j_1), \ldots, (i_k,j_k)\bigr)$.
By Lemma~\ref{lem: rep of int prop},
$\langle (i,j+1) \rangle
= \bigl((i_{1},j_1), \ldots, (i_{t-1},j_{t-1}),(i_t,j_{t}+1), \ldots, (i_k,j_k+1)\bigr)$
for some $2 \leq t \leq k$.
Thus, by equating the $\ell$th entries of $\langle x^*\rangle+\langle (i,j)\rangle$
and $\langle y^*\rangle+\langle (i,j+1) \rangle$,
we see that
\begin{equation}\label{eqn:entries gen cyclic case}
(x_{\ell,1}+i_l,x_{\ell,2}+j_\ell) =
\begin{cases}
\bigl((y_{\ell,1}+i_{\ell}) \!\!\!\mod{m_\ell}, (y_{\ell,2}+j_{\ell})  \!\!\!\mod{\frac{n_l}{m_l}}\bigr) &\text{ if } l < t \\
\bigl((y_{\ell,1}+i_{\ell}) \!\!\!\mod{m_\ell}, (y_{\ell,2}+j_{\ell}+1)\!\!\!\mod{\frac{n_l}{m_l}}\bigr) & \text{ otherwise.}
\end{cases}
\end{equation}
By equating the entries of the pairs in (\ref{eqn:entries gen cyclic case}),
we see that $x_{l,1} = y_{l,1}$ and either $x_{l,2} = y_{l,2}$
or $x_{l,2} \equiv y_{l,2}+1 \pmod{\frac{n_l}{m_l}}$.
If the former is true, then
$x = x_{l,1}\frac{n_l}{m_l} + x_{l,2} = y_{l,1}\frac{n_l}{m_l} + y_{l,2} =~y$,
a contradiction.
Hence, $x_{l,2} \equiv y_{l,2}+1 \pmod{\frac{n_l}{m_l}}$.
If $x_{l,2} = y_{l,2}+1$, then, using a similar argument,
we arrive at the contradiction $x = y+1$.
We are then left with the case in which
$x_{l,2} = 0$ and $y_{l,2} = \frac{n_l}{m_l}-1$,
also a contradiction, as, otherwise,
$x = x_{l,1}\frac{n_l}{m_l} < y_{l,1}\frac{n_l}{m_l} + \frac{n_{l}}{m_l}-1 = y$.
Hence, $\mathcal{H}(S)$ is a matching, as required.
\end{proof}
\noindent
We can now prove Lemma~\ref{lem: sequity gen case}.
\begin{proof}[Proof of Lemma~\ref{lem: sequity gen case}]
Let $r < N$ and $d = \gcd(N,r)$.
By Claim~\ref{claim: matching decomp gen case} and
Lemma~\ref{lem: cyclic $k$-partite gcd not 1},
the assumptions of Proposition~\ref{prop: Matching decomposition} are met
for the hypergraph $\mathcal{K}_{n_1,\ldots , n_k}$ with
matchings $\mathcal{M}_{i ,j}$ ordered by
$\ell_{i ,j}$ for $i \in [d]$ and $j \in [\frac{N}{d}] $, respectively.
Thus,
$cms_r(\mathcal{K}_{n_1,\ldots , n_k}) \geq rn_1-1$ when $r<N$.
By Claim~\ref{claim: reduction of cases gen case}, Lemma~\ref{lem: sequity gen case} is
true for all $r \geq 1$ and $\lambda \geq 1$.
\end{proof}

\section{Proof of Theorem~\ref{thm: hypergraph gen r}: Part III}
\label{sec: remaining}
We now present the remaining lemmas required for the proof of Theorem~\ref{thm: hypergraph gen r},
namely, Lemmas~\ref{lem: hypergraph special cyclic case} and~\ref{lem: non-cyclic case simplified}.
\begin{lemma}\label{lem: hypergraph special cyclic case}
If $n_1^{u-1} \mid r_2$, then $cms_r(\lambda \mathcal{K}_{n_1,\ldots,n_k}) = rn_1$.
\end{lemma}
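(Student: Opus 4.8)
The goal is to construct an ordering $\ell$ of $\lambda\mathcal{K}_{n_1,\ldots,n_k}$ achieving $cms_r(\ell) = rn_1$, since the upper bound $cms_r \le rn_1$ is already known. By Lemma~\ref{lem: seqity gen r} applied with $\Delta = \lambda N$ and $r = r_1 \lambda N + r_2$, it suffices to handle $r = r_2$, i.e.\ the case $1 \le r < \lambda N$ with $n_1^{u-1} \mid r$; then we add $r_1$ full copies of the ordering to jump up to general $r$. Furthermore, by Lemma~\ref{lem: multi from non-multi}, $cms_r(\lambda\mathcal{H}) \ge cms_r(\mathcal{H})$, so it would be enough to treat $\lambda = 1$ --- but one must be careful: when $n_1^{u-1}\mid r$ with $\lambda$ present we really want $n_1^{u-1}\mid r_2$ where $r_2$ is the residue modulo $\lambda N$, so some attention to whether reducing to $\lambda=1$ actually captures all the needed cases is required. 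I expect the argument to go through $\mathcal{K}_{n_1,\ldots,n_k}$ directly and then invoke Lemma~\ref{lem: multi from non-multi}.

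For the core construction I would reuse the matching decomposition machinery of Section~\ref{sec: Proof of the inequalities}. Take $d := \gcd(N, r)$ and the decomposition $\{\mathcal{M}_{i,j} : i\in[d], j\in[N/d]\}$ from Claim~\ref{claim: matching decomp gen case}, with the orderings $\ell_{i,j}$. Proposition~\ref{prop: Matching decomposition} already gives $cms_r \ge rn_1 - 1$ via Lemma~\ref{lem: cyclic k-partite gcd not 1}, which only shows $ms(\ell_{i,j},\ell_{i,j+1}) \ge n_1-1$. The extra ``$+1$'' we need corresponds to showing that, under the divisibility hypothesis $n_1^{u-1}\mid r$, we can instead arrange the matchings (and choose the parameter $d$, or a refined grouping of edges) so that $ms(\ell_{i,j},\ell_{i,j+1}) \ge n_1$ --- i.e.\ the full $n_1$ consecutive edges spanning the junction between consecutive matchings form a matching. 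This is where the divisibility $n_1^{u-1}\mid r$ enters: it forces the ``defect vertices'' $v_1\in N_1,\dots,v_u\in N_u$ flagged in Claim~\ref{claim: hypergraph contr} to be distributed evenly, so that one can line up the last edge of $\ell_{i,j}$ with the first edge of $\ell_{i,j+1}$ on a common vertex in each of $N_1,\dots,N_u$ (equivalently, shift the internal orderings $\ell_{i,j}$ cyclically by an amount depending on $j$ so the boundary edges agree in coordinates $1,\dots,u$ but not in any later coordinate).

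Concretely, I would redefine $\ell_{i,j}$ to be a cyclic shift: $\ell_{i,j}(\langle x^*\rangle + \langle(i,j)\rangle) = (x + c_j) \bmod n_1$ for a carefully chosen offset sequence $c_j$, chosen so that the edge with label $n_1-1$ under $\ell_{i,j}$ and the edge with label $0$ under $\ell_{i,j+1}$ share their first $u$ coordinates; the divisibility $n_1^{u-1}\mid r$ is exactly what makes such a consistent choice of $c_j$ possible around the full cycle of $N/d$ matchings. Then a computation analogous to the proof of Lemma~\ref{lem: cyclic k-partite gcd not 1} --- equating $l$-th entries for the two boundary edges and using Lemma~\ref{lem: rep of int prop} to control which coordinates get incremented when passing from $\langle(i,j)\rangle$ to $\langle(i,j+1)\rangle$ --- shows the $n_1$-edge window across the junction is still a matching, giving $ms(\ell_{i,j},\ell_{i,j+1}) \ge n_1$, hence $cms_r \ge rn_1$ by Proposition~\ref{prop: Matching decomposition}. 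Finally, assemble the general case: write $r = r_1\lambda N + r_2$ with $n_1^{u-1}\mid r_2$, apply the above to get $cms_{r_2}(\lambda\mathcal{K}_{n_1,\ldots,n_k}) = r_2 n_1$ (via the $\lambda=1$ case and Lemma~\ref{lem: multi from non-multi}), then Lemma~\ref{lem: seqity gen r} gives $cms_r \ge r_1\lambda N n_1 + r_2 n_1 = rn_1$, and we are done.

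**The main obstacle.** The delicate point is verifying that the boundary window of $n_1$ edges is a matching: the proof of Lemma~\ref{lem: cyclic k-partite gcd not 1} crucially used that the window had only $n_1-1$ edges (so it could not contain a ``full'' matching's worth, and the increment-by-one in the later coordinates forced a contradiction). With $n_1$ edges the two extreme edges $\langle x^*\rangle + \langle(i,j)\rangle$ (with $x = n_1-1$) and $\langle y^*\rangle + \langle(i,j+1)\rangle$ (with $y = 0$) can in principle collide, and one must use the divisibility hypothesis, via the precise shift offsets $c_j$ and the structure of $\langle 0^*\rangle$ and $\langle (n_1-1)^*\rangle$, to rule this out --- in coordinates $1,\dots,u$ the two edges are designed to agree (that is fine, they are the ``same'' vertex of the defect), while in coordinates $u+1,\dots,k$ one must show they differ, exploiting $n_{u+1},\dots,n_k > n_1$. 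Getting the offset sequence $c_j$ to close up consistently around the cycle while keeping this disjointness is the heart of the matter.
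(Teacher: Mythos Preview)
Your reduction scaffolding is right and matches the paper: reduce via Lemma~\ref{lem: seqity gen r} and Lemma~\ref{lem: multi from non-multi} (and additionally Lemma~\ref{lem: 1 to r}) to the single base case $\lambda=1$, $r=n_1^{u-1}$, then invoke Proposition~\ref{prop: Matching decomposition} with a matching decomposition indexed by $[n_1^{u-1}]\times[N']$ for which $ms(\ell_{i,j},\ell_{i,j+1})\ge n_1$.

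The gap is in the decomposition you plan to use. Reusing the Section~\ref{sec: Proof of the inequalities} matchings $\mathcal{M}_{i,j}=\{\langle x^*\rangle+\langle(i,j)\rangle\}$ together with cyclic label shifts $c_j$ cannot produce $ms(\ell_{i,j},\ell_{i,j+1})\ge n_1$. In coordinate~$1$ the edge labelled $a$ under your shifted ordering has first coordinate $(a-c_j)\bmod n_1$; for the $n_1$-window straddling the junction to be a matching in this coordinate alone one is forced to take $c_{j+1}=c_j$, so the shifts collapse to a constant and you are back to the unshifted orderings. With those, look at a coordinate $l>u$ in which $j\mapsto j+1$ increments $j_l$: the last edge $x=n_1-1$ of $\mathcal{M}_{i,j}$ has $l$-th entry $n_1-1+j_l$, while the edge $y=n_1-2$ of $\mathcal{M}_{i,j+1}$ has $l$-th entry $n_1-2+j_l+1=n_1-1+j_l$, a collision. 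So the full $n_1$-window is never a matching with this construction. Your stated target that the two boundary edges ``agree in coordinates $1,\dots,u$'' is also backwards: every pair of edges in the window must \emph{disagree} in every coordinate.

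What the paper does instead is build a new decomposition $\mathcal{M}'_{i,j}$ (Claim~\ref{claim: decomp for N'}) with two features that together give Lemma~\ref{lem: seqity hypergraph one min}. First, the projection onto coordinates $1,\dots,u$ is a \emph{fixed} matching $\mathcal{M}_i$ of $\mathcal{K}_{n_1,\dots,n_u}$, independent of $j$; hence any $n_1$-window across a junction automatically forms a matching in those coordinates. Second, in coordinates $u+1,\dots,k$ the shift is by $-\langle j\rangle_{\overline{n}'}$ rather than $+\langle j\rangle$; the sign reversal means that when $j\mapsto j+1$ increments coordinate $l$, the $l$-th entries in the window become $\{x-j_l,\dots,n_1-1-j_l\}\cup\{-1-j_l,\dots,x-2-j_l\}$, which are $n_1$ distinct residues modulo $n_l$ precisely because $n_l>n_1$. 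It is this sign change (equivalently, reversing the direction in which the ``large'' coordinates drift as $j$ increases) that buys the extra $+1$ over Lemma~\ref{lem: cyclic $k$-partite gcd not 1}; the divisibility $n_1^{u-1}\mid r$ enters only through the index structure $[n_1^{u-1}]\times[N']$ needed for Proposition~\ref{prop: Matching decomposition}, not through any offset sequence.
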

\begin{lemma} \label{lem: non-cyclic case simplified}
If $n_1^{u-1} \mid r_2$ or
\begin{equation}\label{eqn: non cyclic assume condition}
\left(\left\lfloor \frac{r_2}{n_1^{u-1}}\right\rfloor +1 \right)
\left\lfloor \frac{\lambda}{r_2}\prod_{i=2}^{k} n_i \right\rfloor
\leq \lambda\prod_{i=u+1}^{k}n_i \leq
\left\lfloor  \frac{r_2}{n_1^{u-1}}\right\rfloor
\left(\left\lfloor \frac{\lambda}{r_2}\prod_{i=2}^{k} n_i \right\rfloor +1 \right)  \,,
\end{equation}
then $ms_r(\mathcal{K}_{n_1,\ldots , n_k}) = r n_1$.
\end{lemma}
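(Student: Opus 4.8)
The plan is to prove only the lower bound $ms_r(\lambda\mathcal{K}_{n_1,\ldots,n_k})\ge rn_1$, since the matching upper bound was already observed in Section~\ref{sec: outline and reduction of thm}. First I would peel off complete rotations: a vertex of the smallest part $N_1$ has degree $\lambda N=\Delta(\lambda\mathcal{K}_{n_1,\ldots,n_k})$, so writing $r=r_1\lambda N+r_2$ and applying Lemma~\ref{lem: seqity gen r} with $a=r_1$ and $b=r_2$ gives $ms_r(\lambda\mathcal{K}_{n_1,\ldots,n_k})\ge r_1\lambda Nn_1+ms_{r_2}(\lambda\mathcal{K}_{n_1,\ldots,n_k})$; since $r_1\lambda Nn_1+r_2n_1=rn_1$, it suffices to prove $ms_{r_2}(\lambda\mathcal{K}_{n_1,\ldots,n_k})=r_2n_1$, so I may assume $r=r_2<\lambda N$. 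If moreover $n_1^{u-1}\mid r_2$, then Lemma~\ref{lem: hypergraph special cyclic case} already gives $cms_{r_2}(\lambda\mathcal{K}_{n_1,\ldots,n_k})=r_2n_1$, and $cms_{r_2}\le ms_{r_2}\le r_2n_1$ closes this case; hence I may additionally assume $n_1^{u-1}\nmid r_2$ and that~(\ref{eqn: non cyclic assume condition}) holds, whence in particular $0<r_2<\lambda N$.

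In the remaining case the plan is to build an ordering $\ell$ with $ms_{r_2}(\ell)=r_2n_1$ by reverse-engineering the count in the proof of Lemma~\ref{lem: hypergraph necessary condition}. Put $q:=\lfloor r_2/n_1^{u-1}\rfloor$, $\rho:=r_2-qn_1^{u-1}$ (so $1\le\rho\le n_1^{u-1}-1$), $a:=\lfloor\lambda N/r_2\rfloor$, $b:=\lambda N-ar_2$, and write $N'=\prod_{i=u+1}^kn_i$. Condition~(\ref{eqn: non cyclic assume condition}) is precisely the pair of inequalities $(q+1)a\le\lambda N'\le q(a+1)$, and from it one checks that one may attach a value $d(e)\in\{q,q+1\}$ to each edge $e$ of the ``core'' complete $u$-partite $u$-graph $\mathcal{K}_{n_1,\ldots,n_u}$, constant on each of its $n_1^{u-1}$ natural perfect matchings and equal to $q+1$ on exactly $\rho$ of them, such that $d'(e):=\lambda N'-a\,d(e)$ satisfies $0\le d'(e)\le d(e)$ for every $e$, $\sum_ed(e)=r_2n_1$ and $\sum_ed'(e)=bn_1$. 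By~(\ref{eqn:condition on de}) and Claim~\ref{claim: hypergraph contr}, these are exactly the data that any ordering witnessing $ms_{r_2}(\ell)=r_2n_1$ must induce on the projections of its edges onto $N_1\cup\cdots\cup N_u$, so the task is to realise them. I would do so by taking a matching decomposition of $\lambda\mathcal{K}_{n_1,\ldots,n_k}$ into $\lambda N$ matchings of $n_1$ edges (for instance $\lambda$ parallel copies of each matching from Claim~\ref{claim: matching decomp gen case}), grouped by which core perfect matching they restrict to; the numbers $d(e),d'(e)$ then prescribe how to split these matchings into $r_2$ ``position classes'', $b$ of size $a+1$ and $r_2-b$ of size $a$, with class sizes adding up correctly over each core perfect matching (this last consistency is exactly $0\le d'(e)\le d(e)$ together with $\sum_e d(e)=r_2n_1$ and $\sum_e d'(e)=bn_1$). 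I would then order the matchings inside each class so that consecutive members restrict \emph{identically} to $N_1,\ldots,N_u$ while being shifted by a block of $n_1$ consecutive vertices on each of $N_{u+1},\ldots,N_k$ (the $\lambda$ parallel labels assigned so that $\ell$ is a bijection), which makes every consecutive pair $\ell,\ell'$ satisfy $ms(\ell,\ell')\ge n_1$. Finally I would place the classes in a common linear order $\mathcal{M}_0,\ldots,\mathcal{M}_{\lambda N-1}$, using Corollary~\ref{cor: General non cyclic ordering} with $s=r_2$ and $t=\lambda N$, so that the matchings in positions $i,i+r_2,i+2r_2,\ldots$ are exactly the members of one class in order; then $ms(\ell_i,\ell_{i+r_2})\ge n_1$ for all $i\in[\lambda N-r_2]$, and Proposition~\ref{prop: Matching decomposition both} with $n=n_1$ and $x=0$ gives $ms_{r_2}(\lambda\mathcal{K}_{n_1,\ldots,n_k})\ge r_2n_1$.

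I expect this last construction to be the main obstacle. The delicate point is obtaining the value $r_2n_1$ exactly rather than $r_2n_1-1$: this is the $x=0$ case of Proposition~\ref{prop: Matching decomposition both}, which forces the overlap of any two matchings that are $r_2$ apart in the order to be itself a matching, i.e. those matchings must agree on all of $N_1,\ldots,N_u$ and interleave with no collision on $N_{u+1},\ldots,N_k$ — and this must be arranged simultaneously with realising the \emph{non-uniform} multiplicities $d(e)$ that $n_1^{u-1}\nmid r_2$ imposes and with keeping $\ell$ a bijection onto $E(\lambda\mathcal{K}_{n_1,\ldots,n_k})$. Condition~(\ref{eqn: non cyclic assume condition}) is exactly the arithmetic that makes those non-uniform multiplicities compatible with a matching decomposition, while the use of a linear rather than cyclic order — so that Claim~\ref{claim: hypergraph contr} constrains only the first $\lambda Nn_1-r_2n_1$ edges, and the position classes may therefore have unequal sizes — supplies the flexibility that is provably unavailable cyclically by Lemma~\ref{lem: hypergraph cyclic necessary condition}. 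Making the interleaving on $N_{u+1},\ldots,N_k$ go through for all parts at once, while still partitioning every edge exactly once, will likely require the explicit mixed-radix description of the matchings $\mathcal{M}_{i,j}$ together with a careful choice of the divisor $d$ in Claim~\ref{claim: matching decomp gen case}.
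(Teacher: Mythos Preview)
Your plan follows the paper's approach closely: the reduction to $r=r_2<\lambda N$ via Lemma~\ref{lem: seqity gen r}, disposing of $n_1^{u-1}\mid r_2$ by Lemma~\ref{lem: hypergraph special cyclic case}, the rewriting of~(\ref{eqn: non cyclic assume condition}) as $(q+1)a\le\lambda N'\le q(a+1)$, the matching decomposition of $\lambda\mathcal{K}_{n_1,\ldots,n_k}$ grouped by the $n_1^{u-1}$ ``core'' perfect matchings of $\mathcal{K}_{n_1,\ldots,n_u}$ (the paper's $\mathcal{M}'_{i,j}$ of Claim~\ref{claim: decomp for N'}), the fact that $j$-consecutive matchings in a fixed core satisfy $ms(\ell'_{i,j},\ell'_{i,j+1})\ge n_1$ (Lemma~\ref{lem: seqity hypergraph one min}), and the final appeal to Proposition~\ref{prop: Matching decomposition both} with $x=0$ --- all of this matches the paper exactly.

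The one genuine gap is the reindexing step you flag as ``the main obstacle''. Invoking Corollary~\ref{cor: General non cyclic ordering} with $s=r_2$ and $t=\lambda N$ only tells you that the position-chains $\{l,l+r_2,l+2r_2,\ldots\}\cap[\lambda N]$ exist and have sizes $a$ or $a+1$; it does not produce an assignment of the matchings $\mathcal{M}'_{i,j}$ to positions with the required property. The difficulty is that a single core $i$ contributes $\lambda N'$ matchings, which must be spread over \emph{several} chains (namely $q$ or $q+1$ of them, in your notation), and within each such chain the $j$-index must advance by exactly~$1$ at each step. Arranging this simultaneously for all cores so that the result is a bijection onto $[\lambda N]$ is not automatic. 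The paper resolves it by building an explicit bijection $\rho:[n_1^{u-1}]\times[\lambda N']\to[\lambda N]$ (Lemma~\ref{lem: properties of difficult re-index non-cyclic case}) and applies Corollary~\ref{cor: General non cyclic ordering} not at the $[\lambda N]$ level but \emph{twice at the $[\lambda N']$ level}: once with $s=q$ (paper's $\alpha=p$) for the $n_1^{u-1}-\rho$ cores receiving $q$ chains, and once with $s=q+1$ (paper's $\beta=p+1$) for the $\rho$ cores receiving $q+1$ chains. The four-case formula for $\rho$ then interleaves these two families so that the chain lengths and starting residues mesh into a bijection, using the identities $\gamma\nu+\delta q=b$ and $(\alpha-\gamma)\nu+(\beta-\delta)q=r-b$. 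Your $d(e),d'(e)$ bookkeeping is exactly the arithmetic that makes this interleaving possible, but the construction itself is the missing ingredient.
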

\noindent
The rest of this section serves to prove these lemmas.

First note that we can immediately reduce Lemma~\ref{lem: hypergraph special cyclic case} to
a single case for $\lambda$, as follows.
\begin{claim}\label{claim: reduction cyclic case}
If Lemma~\ref{lem: hypergraph special cyclic case} is true for $r=n_1^{u-1}$
and $\lambda =1$,
then Lemma~\ref{lem: hypergraph special cyclic case} is true for all $r \geq n_1^{u-1}$ and $\lambda \geq 1$.
\end{claim}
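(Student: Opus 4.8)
The plan is to reduce the general case to the single base case $r=n_1^{u-1}$, $\lambda=1$ by peeling off multiples of the maximum degree and then exploiting the divisibility $n_1^{u-1}\mid r_2$ through the multiplicative estimates of Lemmas~\ref{lem: 1 to r}, \ref{lem: seqity gen r} and~\ref{lem: multi from non-multi}. Throughout write $\Delta:=\Delta(\lambda\mathcal{K}_{n_1,\ldots,n_k})=\lambda N$, recall $|E(\lambda\mathcal{K}_{n_1,\ldots,n_k})|=\lambda N n_1$, and keep in mind the always-valid bound $cms_r(\lambda\mathcal{K}_{n_1,\ldots,n_k})\le rn_1$, which will be used repeatedly to upgrade a lower bound to an equality.

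First I would observe that the hypothesis of the claim is precisely $cms_{n_1^{u-1}}(\mathcal{K}_{n_1,\ldots,n_k})=n_1^{u}$: for $\lambda=1$ and $r=n_1^{u-1}$ one has $r_2=n_1^{u-1}$ when $u<k$ (since then $N=n_1^{u-1}N'$ with $N'\ge 2$, so $n_1^{u-1}<N$) and $r_2=0$ when $u=k$; in either case $n_1^{u-1}\mid r_2$, so this is a legitimate instance of Lemma~\ref{lem: hypergraph special cyclic case}, reading $cms_{n_1^{u-1}}(\mathcal{K}_{n_1,\ldots,n_k})=n_1^{u-1}\cdot n_1$. Applying Lemma~\ref{lem: multi from non-multi} together with the trivial upper bound then yields $cms_{n_1^{u-1}}(\lambda\mathcal{K}_{n_1,\ldots,n_k})=n_1^{u}$ for every $\lambda\ge 1$.

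Next I would handle the case $r<\lambda N$ with $n_1^{u-1}\mid r$. If $r=0$ there is nothing to prove; otherwise write $r=n_1^{u-1}q$ with $q\ge 1$ and apply Lemma~\ref{lem: 1 to r} with this factorisation, which is permissible since $r=n_1^{u-1}q<\lambda N=\Delta$, obtaining $cms_r(\lambda\mathcal{K}_{n_1,\ldots,n_k})\ge q\,cms_{n_1^{u-1}}(\lambda\mathcal{K}_{n_1,\ldots,n_k})=q\,n_1^{u}=rn_1$, and hence equality by the trivial bound. Finally, for arbitrary $r\ge n_1^{u-1}$ and $\lambda\ge 1$ with $n_1^{u-1}\mid r_2$, Lemma~\ref{lem: seqity gen r} applied to $\lambda\mathcal{K}_{n_1,\ldots,n_k}$ with the splitting $r=r_1\Delta+r_2$ gives $cms_r(\lambda\mathcal{K}_{n_1,\ldots,n_k})\ge r_1\lambda N n_1+cms_{r_2}(\lambda\mathcal{K}_{n_1,\ldots,n_k})$; since $r_2<\lambda N$ and $n_1^{u-1}\mid r_2$, the previous paragraph gives $cms_{r_2}(\lambda\mathcal{K}_{n_1,\ldots,n_k})=r_2 n_1$, so $cms_r(\lambda\mathcal{K}_{n_1,\ldots,n_k})\ge (r_1\lambda N+r_2)n_1=rn_1$, and equality follows once more.

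This is a bookkeeping reduction with no genuine obstacle. The only points needing care are checking, at each invocation of a multiplicative lemma, that the relevant argument is a multiple of $n_1^{u-1}$ and lies strictly below $\Delta=\lambda N$ (for the factor in Lemma~\ref{lem: 1 to r} and for the remainder term in Lemma~\ref{lem: seqity gen r}), and verifying that the three applications of the trivial inequality $cms_r\le rn_1$ indeed collapse each lower bound to the claimed value $rn_1$.
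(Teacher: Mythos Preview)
Your argument is correct and follows essentially the same route as the paper: both proofs reduce the general statement to the single base case via the same three lemmas (Lemma~\ref{lem: 1 to r}, Lemma~\ref{lem: seqity gen r}, and Lemma~\ref{lem: multi from non-multi}). The only difference is the order in which they are invoked---the paper first does all of $\lambda=1$ (Lemma~\ref{lem: 1 to r} then Lemma~\ref{lem: seqity gen r}) and applies Lemma~\ref{lem: multi from non-multi} last, whereas you push Lemma~\ref{lem: multi from non-multi} to the front---but this is a cosmetic reordering with no substantive difference.
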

\begin{proof}
Suppose that Lemma~\ref{lem: hypergraph special cyclic case} is true
for $r=n_1^{u-1}$ and $\lambda = 1$.
Then, by Lemma~\ref{lem: 1 to r},
Lemma~\ref{lem: hypergraph special cyclic case} is true for $\lambda=1$ and all $r < N$
such that
$n_1^{u-1} \mid r_2=r$.
Thus, for any $r =r_1N +r_2$ such that
$n_1^{u-1} \mid r_2$, it follows from Lemma~\ref{lem: seqity gen r} that
\[
cms_r(\mathcal{K}_{n_1,\ldots,n_k}) \geq r_1 n_1 N + cms_{r_2}(\mathcal{H}) = r_1 n_1N +r_2 n_1 = r n_1 \,.
\]
By Lemma~\ref{lem: multi from non-multi}, the cases in which $\lambda >1$ follow from
the case in which $\lambda=1$, and we are done.
\end{proof}

\begin{claim}\label{claim: reduction non-cyclic case}
If Lemma~\ref{lem: non-cyclic case simplified} is true for $1 \leq r < \lambda N$,
then Lemma~\ref{lem: non-cyclic case simplified} is true for all $r \geq 1$.
\end{claim}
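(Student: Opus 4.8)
The plan is to obtain the cases $r \geq \lambda N$ from the assumed cases $1 \leq r < \lambda N$ by ``peeling off'' complete copies of $\lambda\mathcal{K}_{n_1,\ldots,n_k}$, in the same spirit as the proofs of Claims~\ref{claim: reduction of cases gen case} and~\ref{claim: reduction cyclic case}. Throughout, $\lambda$ and $1 \leq n_1 \leq \cdots \leq n_k$ are fixed. Assume Lemma~\ref{lem: non-cyclic case simplified} holds for all $r$ with $1 \leq r < \lambda N$, and fix $r \geq \lambda N$ for which $n_1^{u-1}\mid r_2$ or (\ref{eqn: non cyclic assume condition}) holds, where $r = r_1\lambda N + r_2$ with $r_1 \geq 1$ and $r_2 \in [\lambda N]$. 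The crucial observation is that the hypothesis of Lemma~\ref{lem: non-cyclic case simplified} constrains only $r_2$ (together with the fixed data $\lambda, n_1, \ldots, n_k$) and is completely insensitive to $r_1$. Since the bound $ms_r(\lambda\mathcal{K}_{n_1,\ldots,n_k}) \leq rn_1$ was already recorded in Section~\ref{sec: outline and reduction of thm}, only the reverse inequality $ms_r(\lambda\mathcal{K}_{n_1,\ldots,n_k}) \geq rn_1$ needs proof.

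Next I would note that $\lambda\mathcal{K}_{n_1,\ldots,n_k}$ has $\varepsilon := \lambda N n_1$ edges and maximum degree $\Delta := \lambda N$ (the maximum degree being attained on the first part, as $n_1 \leq n_j$ for every $j$). Hence $r = r_1\Delta + r_2$ is exactly the decomposition demanded by Lemma~\ref{lem: seqity gen r}, which yields
\[
  ms_r(\lambda\mathcal{K}_{n_1,\ldots,n_k}) \;\geq\; r_1\varepsilon + ms_{r_2}(\lambda\mathcal{K}_{n_1,\ldots,n_k})
  \;=\; r_1\lambda N n_1 + ms_{r_2}(\lambda\mathcal{K}_{n_1,\ldots,n_k}).
\]
If $r_2 = 0$, then $ms_{r_2}(\lambda\mathcal{K}_{n_1,\ldots,n_k}) \geq 0$ trivially and $r_1\lambda N n_1 = rn_1$. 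If $r_2 \geq 1$, then $r' := r_2$ satisfies $1 \leq r' < \lambda N$ and, in its own decomposition $r' = r'_1\lambda N + r'_2$ with $r'_2 \in [\lambda N]$, has $r'_1 = 0$ and $r'_2 = r_2$; so the hypothesis of Lemma~\ref{lem: non-cyclic case simplified} holds for $r'$ precisely because it holds for $r$, and the assumed cases give $ms_{r_2}(\lambda\mathcal{K}_{n_1,\ldots,n_k}) = r_2 n_1$. Either way, $ms_r(\lambda\mathcal{K}_{n_1,\ldots,n_k}) \geq r_1\lambda N n_1 + r_2 n_1 = (r_1\lambda N + r_2)n_1 = rn_1$, which, combined with the cases $1 \leq r < \lambda N$, proves Lemma~\ref{lem: non-cyclic case simplified} for all $r \geq 1$.

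I do not anticipate a real obstacle: this is a routine ``reduce modulo $\Delta$'' argument of the same type already used for Claims~\ref{claim: reduction of cases gen case} and~\ref{claim: reduction cyclic case}, and all the work is done by Lemma~\ref{lem: seqity gen r}. The only items to handle with care are the identification $\Delta(\lambda\mathcal{K}_{n_1,\ldots,n_k}) = \lambda N$, which makes the global decomposition $r = r_1\lambda N + r_2$ coincide with the one Lemma~\ref{lem: seqity gen r} needs and keeps the value $r_2$ appearing in the hypothesis unchanged, and the trivial boundary case $r_2 = 0$, where the assumed small cases are not used. It is worth remarking that, in contrast to the cyclic situation, one cannot first reduce to $\lambda = 1$ using a multiplicity lemma, because the non-cyclic analogue of Lemma~\ref{lem: multi from non-multi} is false; this is exactly why $\lambda$ is carried through and why the threshold in the claim is $\lambda N$ rather than $N$.
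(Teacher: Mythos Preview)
Your proposal is correct and follows essentially the same route as the paper: both invoke Lemma~\ref{lem: seqity gen r} with $\Delta(\lambda\mathcal{K}_{n_1,\ldots,n_k}) = \lambda N$ and $\varepsilon = \lambda N n_1$ to pass from $r$ to $r_2$, then apply the assumed small cases. Your version is slightly more careful in that you explicitly separate out the boundary case $r_2 = 0$ and spell out why the hypothesis of Lemma~\ref{lem: non-cyclic case simplified} for $r$ transfers to $r' = r_2$; the paper's proof leaves these points implicit.
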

\begin{proof}
Suppose that Lemma~\ref{lem: non-cyclic case simplified} is true for $r < \lambda N$
and that either $n_1^{u-1}\mid r_2$ or equation (\ref{eqn: non cyclic assume condition}) holds.
Then, for each $r\geq 1$,
\[
ms_r(\lambda\mathcal{K}_{n_1,\ldots,n_k})
\geq r_1 n_1 \lambda \prod_{i=2}^k n_i + ms_{r_2}(\lambda\mathcal{K}_{n_1,\ldots,n_k})
= r_1 \lambda n_1 \prod_{i=2}^k n_i+r_2 n_1 = r n_1 \,,
\]
by Lemma~\ref{lem: seqity gen r}.
\end{proof}

Set $N_i := [n_i]$ for all $1 \leq i \leq k$.
By the natural isomorphism between $[n_i]$ and $[n_i] \times [1]$ for all~$i$,
it follows that the sets $N_i$ are, up to isomorphism,
the same sets as those defined in Section~\ref{sec: Proof of the inequalities}
for $d=N$; i.e., when $m_i = n_i$ for all $1 \leq i \leq k$.
We will therefore use the definitions and notation of the previous section,
where, for simplicity,
we identify the edges of $\mathcal{K}_{n_1,\ldots,n_k}$ with the elements of
$\prod_{i=1}^{k} \mathbb{Z}_{n_i}$.
Then $\langle x^* \rangle_{\overline{n}}:=
\langle x^* \rangle_{\overline{m},\overline{n/m} }$, as defined in
Section~\ref{sec: Proof of the inequalities},
will be identified  with the element $(x, \ldots , x) \in \prod_{i=1}^{k}\mathbb{Z}_{n_i}$
for each $x\in \mathbb{Z}_{n_1}$.

Let $\ell'$ be a labelling of $\mathcal{K}_{n_1, \ldots, n_u}$
such that the edges
$(\ell')^{-1}(x n_1), \ldots, (\ell')^{-1}(x n_1 + n_1-1)$
form a matching for all $x \in [n_1^{u-1}]$.
That is, let $\ell'$ be an ordering which corresponds to
$S(\mathcal{M}_0) \vee \cdots \vee S(\mathcal{M}_{n_1^{u-1}-1})$
for some matching decomposition
$\mathcal{M}_0$, $\ldots$, $\mathcal{M}_{n_1^{u-1}-1}$ of
$\mathcal{K}_{n_1,\ldots , n_u}$, where each $\mathcal{M}_i$ is ordered arbitrarily.
Let $\overline{n}' := (n_1,n_{u+1},\ldots, n_{k})$.
For $i \in [N']$,
let
$\mathcal{M}'_{i} := \bigl\lbrace \langle x^* \rangle_{\overline{n}'} - \langle i \rangle_{\overline{n}'}
\;:\; x \in [n_1] \bigr\rbrace$
and
$\overline{\mathcal{M}'_{i}} := \bigl\lbrace (x_{u+1}, \ldots , x_k)\;:\;
(x_1 ,x_{u+1}, \ldots, x_{k})\in \mathcal{M}_i' \bigr\rbrace$.
It is easy to check that $\mathcal{M}'_i$ and, therefore,
$\overline{\mathcal{M}'_{i}}$ is a matching,
by using a similar argument to the proof of Claim~\ref{claim: matching decomp gen case}.
Let $\ell'_{i}$ be the ordering of $\mathcal{M}'_i$ defined by
$\ell'_i(\langle x^* \rangle_{\overline{n}'}-\langle i \rangle_{\overline{n}'}) = x$
for all $x \in [n_1]$.
Also let $\overline{\ell_i'}$ be the analogous ordering for $\overline{\mathcal{M}_i'}$.
Identify each element $(x_1 , \ldots ,x_k) \in \prod_{i=1}^{k} \mathbb{Z}_{n_i}$
with element $\bigl((x_1 , \ldots, x_u), (x_{u+1},\ldots, x_{k})\bigr) \in
(\prod_{i=1}^{u} \mathbb{Z}_{n_i}) \times (\prod_{i=u+1}^{k}\mathbb{Z}_{n_i})$.
For $i \in [n_1^{u-1}]$ and $j \in [\lambda N']$,
let $\mathcal{M}_{i,j}'$ be a set containing
an edge parallel to the edge $((\ell')^{-1}(i n_1 + x),(\overline{\ell_{j}'})^{-1}(x))$
for each $x \in [n_1]$, where, for simplicity, we let $\ell'_{j}= \ell'_{j'}$ for $j' \in [N']$
with $j' \equiv j \pmod{N}$.

\begin{claim}\label{claim: decomp for N'}
The set $\bigl\lbrace \mathcal{M}_{i,j}' \;:\; i \in [n_1^{u-1}], j \in [\lambda N']\bigr\rbrace$
is a matching decomposition of $\lambda \mathcal{K}_{n_1 ,\ldots , n_k}$.
\end{claim}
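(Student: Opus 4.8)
The plan is to verify the three requirements in the definition of a matching decomposition: that each $\mathcal{M}_{i,j}'$ is a matching, that they are pairwise edge-disjoint, and that their union is all of $E(\lambda\mathcal{K}_{n_1,\ldots,n_k})$. I would begin with a size count, which lets me fold the last two requirements into a single covering statement. Each $\mathcal{M}_{i,j}'$ has exactly $n_1$ edges (one for each $x\in[n_1]$), there are $n_1^{u-1}\lambda N'$ pairs $(i,j)$, and $n_1^{u-1}N'=N$ since $n_2=\cdots=n_u=n_1$; hence the $\mathcal{M}_{i,j}'$ contain $n_1^u\lambda N'=\lambda n_1 N=|E(\lambda\mathcal{K}_{n_1,\ldots,n_k})|$ edges in total. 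So it suffices to show that each $\mathcal{M}_{i,j}'$ is a matching and that, counting multiplicity, every edge of $\mathcal{K}_{n_1,\ldots,n_k}$ occurs in exactly $\lambda$ of the $\mathcal{M}_{i,j}'$ (necessarily at most once in each): then the parallel copies in $\lambda\mathcal{K}_{n_1,\ldots,n_k}$ can be assigned so as to yield a genuine partition.

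For the matching property I would fix $(i,j)$ and take $j'\in[N']$ with $j'\equiv j\pmod{N'}$. As $x$ ranges over $[n_1]$, the first $u$ coordinates $(\ell')^{-1}(in_1+x)$ range over $(\ell')^{-1}(in_1),\ldots,(\ell')^{-1}(in_1+n_1-1)$, which by the choice of $\ell'$ form a matching of $\mathcal{K}_{n_1,\ldots,n_u}$, while the last $k-u$ coordinates $(\overline{\ell_j'})^{-1}(x)$ range over $\overline{\mathcal{M}'_{j'}}$, which is a matching as already noted in the text. Hence two edges of $\mathcal{M}_{i,j}'$ arising from distinct $x\ne x'$ already disagree, and so share no vertex, in each of their first $u$ coordinates; thus $\mathcal{M}_{i,j}'$ is a matching.

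The covering statement is where the bookkeeping lives. Fix an edge $e=\bigl((a_1,\ldots,a_u),(a_{u+1},\ldots,a_k)\bigr)$ of $\mathcal{K}_{n_1,\ldots,n_k}$. Its $\ell'$-label $\ell'((a_1,\ldots,a_u))\in[n_1^u]$ is uniquely of the form $in_1+x$ with $i\in[n_1^{u-1}]$ and $x\in[n_1]$, so these are the only possible first index and position of $e$ within any $\mathcal{M}_{i,j}'$. It then remains to determine for which $j$ one has $(a_{u+1},\ldots,a_k)=(\overline{\ell_{j'}'})^{-1}(x)$ with $j'\equiv j\pmod{N'}$; unwinding $\mathcal{M}'_{j'}=\{\langle x^*\rangle_{\overline{n}'}-\langle j'\rangle_{\overline{n}'}:x\in[n_1]\}$ and the identification $\langle x^*\rangle_{\overline{n}'}=(x,\ldots,x)$, this condition becomes $j_l'\equiv x-a_l\pmod{n_l}$ for $u+1\le l\le k$, and by Lemma~\ref{lem: rep of int prop} exactly one $j'\in[N']$ has these base-$\overline{n}'$ digits. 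Consequently $e$ belongs to $\mathcal{M}_{i,j}'$, once, precisely for the $\lambda$ values $j\in[\lambda N']$ with $j\equiv j'\pmod{N'}$. Combined with the size count, this shows the underlying-edge multiset of the $\mathcal{M}_{i,j}'$ is exactly $\lambda$ copies of each edge of $\mathcal{K}_{n_1,\ldots,n_k}$, and choosing the parallel copies accordingly makes $\{\mathcal{M}_{i,j}'\}$ a matching decomposition of $\lambda\mathcal{K}_{n_1,\ldots,n_k}$.

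The only real difficulty is keeping the indices straight in the last step — the split of the $\ell'$-label as $in_1+x$, the reduction of $j$ modulo $N'$, and the digit identities $j_l'\equiv x-a_l$ — but no idea beyond Lemma~\ref{lem: rep of int prop} and the already-established matching property of the $\overline{\mathcal{M}_i'}$ is needed.
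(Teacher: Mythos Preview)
Your proof is correct. The matching property is argued exactly as in the paper, but the decomposition step differs: the paper first reduces from $\lambda$ to $\lambda=1$ by observing that $\mathcal{M}_{i,j}',\mathcal{M}_{i,j+N'}',\ldots,\mathcal{M}_{i,j+(\lambda-1)N'}'$ are parallel copies, and then shows $\{\mathcal{M}_{i,j}':i\in[n_1^{u-1}],\,j\in[N']\}$ decomposes $\mathcal{K}_{n_1,\ldots,n_k}$ by recognising $\mathcal{M}'_{j}$ as (isomorphic to) the matching $\mathcal{M}_{0,N'-j}$ of Section~\ref{sec: Proof of the inequalities} for $\mathcal{K}_{n_1,n_{u+1},\ldots,n_k}$, so that Claim~\ref{claim: matching decomp gen case} does the work. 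Your route is more self-contained: a global size count plus a direct computation that each edge has exactly $\lambda$ preimages $(i,j)$, obtained by splitting the $\ell'$-label into $(i,x)$ and solving the digit equations $j'_l\equiv x-a_l\pmod{n_l}$ via Lemma~\ref{lem: rep of int prop}. The paper's approach buys modularity (it reuses the earlier claim rather than redoing a bijection check), while yours avoids invoking that earlier machinery and is arguably more transparent about why exactly $\lambda$ copies of each edge appear.
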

\begin{proof}
Each $\mathcal{M}_{i,j}'$ is a matching since $(\ell')^{-1}(in_1) , \ldots, (\ell')^{-1}(in_1+n_1-1)$
form the matching $\mathcal{M}_{i}$
and $(\ell_{j}')^{-1}(0) , \ldots, (\ell_{j}')^{-1}(n_1-1)$
form the matching $\overline{\mathcal{M}_i}$.
For $i~\in~[n_1^{u-1}]$ and $j \in [N']$,
there are $\lambda$ matchings
whose edges are parallel to the same as those in $\mathcal{M}_{i,j}'$,
namely, $\mathcal{M}_{i,j}', \ldots, \mathcal{M}'_{i,j+(\lambda-1)N'}$.
Therefore, it suffices to show that
$\bigl\lbrace \mathcal{M}_{i,j} \;:\; i \in [n_1^{u-1}], j \in [N'] \bigr\rbrace$ is a matching decomposition of $\mathcal{K}_{n_1 ,\ldots , n_k}$.

We see that the matching $\mathcal{M}'_{i}$ is isomorphic to the matching
$\mathcal{M}_{0,N'-i}$ defined in
Section~\ref{sec: Proof of the inequalities} for $d=N'$ and
$\mathcal{K}_{n_1,n_{u+1},\ldots, n_k}$,
by noting that $ \langle x^* \rangle_{\overline{n}'} + \langle -i \rangle_{\overline{n}'}
= \langle x^* \rangle_{\overline{n}'} - \langle i \rangle_{\overline{n}'}$
for any $x \in [n_1]$
and by setting $\mathcal{M}_{0,N'}:= \mathcal{M}_{0,0}$.
Thus, by Claim~\ref{claim: matching decomp gen case},
$\bigl\lbrace \mathcal{M}'_{j}\;:\; j \in [N'] \bigr\rbrace$ is a matching
decomposition of $\mathcal{K}_{n_1, n_{u+1},\ldots , n_k}$.
The edges of $\mathcal{M}_{i,j}'$ are isomorphic to edges in
$\mathcal{M}'_j$ by identifying
$((\ell')^{-1}(i n_1 + x),(\overline{\ell_{j}'})^{-1}(x))$ with
$(x,(\overline{\ell_{j}'})^{-1}(x))$
for all $x \in [n_1]$.
Hence,
$\bigl\lbrace \mathcal{M}_{i,j}'\;:\; j \in [N'] \bigr\rbrace$ is a matching
decomposition of $\mathcal{M}_i \times \mathcal{K}_{ n_{u+1},\ldots , n_k}$
for any $i \in [ n_1^{u-1}]$.
As every edge of $\mathcal{K}_{n_1, \ldots, n_{u}}$
appears in exactly one $\mathcal{M}_i$, 
the set $\bigl\lbrace \mathcal{M}_{i,j}'\;:\; i \in [n_1^{u-1}],\, j \in [N'] \bigr\rbrace$
is a matching decomposition of $\mathcal{K}_{ n_{1},\ldots , n_k}$, as required.
\end{proof}

Let $\ell_{i,j}'$ be the ordering of $\mathcal{M}_{i,j}'$ defined by
$\ell_{i,j}'\left(\bigl(({\ell'})^{-1}(i n_1 + x),(\overline{\ell_{j}'})^{-1}(x)\bigr)\right)$
for $x \in [n_1]$,
and set $\ell'_{i,\lambda N}:= \ell'_{i,0}$ and (thus) $\mathcal{M}'_{i,\lambda N}:= \mathcal{M}'_{i,0}$.

\begin{lemma}\label{lem: seqity hypergraph one min}
For all $ i \in [n_1^{u-1}]$ and  $j \in [\lambda N']$, $ms(\ell'_{i,j},\ell'_{i,j+1}) \geq n_1$ holds.
\end{lemma}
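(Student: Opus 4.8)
The plan is to unwind the definition of $ms(\ell'_{i,j},\ell'_{i,j+1})$: this quantity is the largest $s$ with $ms(\ell'_{i,j}\vee_s\ell'_{i,j+1})\geq s$, so it suffices to verify that every window of $n_1$ consecutive edges of the hypergraph $\mathcal{H}_{n_1}$ forms a matching, where $\mathcal{H}_{n_1}$ has as its edges the last $n_1-1$ edges of $\ell'_{i,j}$ (those with $\ell'_{i,j}$-label in $\{1,\dots,n_1-1\}$) followed by the first $n_1-1$ edges of $\ell'_{i,j+1}$ (label in $\{0,\dots,n_1-2\}$), ordered accordingly. Since $\mathcal{H}_{n_1}$ has only $2n_1-2$ edges, any such window $S$ necessarily meets both matchings; concretely, $S$ consists of the edges $e_x:=((\ell')^{-1}(in_1+x),(\overline{\ell_j'})^{-1}(x))$ of $\mathcal{M}'_{i,j}$ for $x\in\{p+1,\dots,n_1-1\}$ together with the edges $f_y:=((\ell')^{-1}(in_1+y),(\overline{\ell_{j+1}'})^{-1}(y))$ of $\mathcal{M}'_{i,j+1}$ for $y\in\{0,\dots,p\}$, for some $p\in\{0,\dots,n_1-2\}$; in particular $y<x$ for every such pair. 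As $\mathcal{M}'_{i,j}$ and $\mathcal{M}'_{i,j+1}$ are themselves matchings (Claim~\ref{claim: decomp for N'}), it remains to show that no $e_x$ and $f_y$ share a vertex, so I would suppose they agree in some coordinate $l$ with $1\le l\le k$ and derive a contradiction.

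First I would treat the coordinates $l\le u$. There $e_x$ and $f_y$ agreeing in coordinate $l$ would force $(\ell')^{-1}(in_1+x)$ and $(\ell')^{-1}(in_1+y)$ to share a vertex; but these are edges of $\mathcal{K}_{n_1,\dots,n_u}$ whose $\ell'$-labels both lie in the block $\{in_1,\dots,in_1+n_1-1\}$, which is by construction exactly the matching $\mathcal{M}_i$, and they are distinct since $x\neq y$ --- a contradiction. For the coordinates $l\ge u+1$ I would compute directly, as in the proof of Lemma~\ref{lem: cyclic $k$-partite gcd not 1}: from the definitions of $\mathcal{M}'_j$ and $\overline{\ell_j'}$ one reads off $(\overline{\ell_j'})^{-1}(x)=(x-j_{u+1},\dots,x-j_k)$, where $(j_{u+1},\dots,j_k)$ are the base-$\overline{n}'$ digits of $j$, and applying Lemma~\ref{lem: rep of int prop} to write the corresponding digits of $j+1$ as $(j_{u+1},\dots,j_{t-1},j_t+1,\dots,j_k+1)$ for some $t$, agreement of $e_x$ and $f_y$ in coordinate $l$ forces $x\equiv y\pmod{n_l}$ when $l<t$ and $x\equiv y-1\pmod{n_l}$ when $l\ge t$. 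Since $0\le y<x\le n_1-1$ and $n_l\ge n_{u+1}>n_1$ (because $u$ is the largest index with $n_u=n_1$), both congruences are impossible: $x=y$ and $x=y-1$ contradict $y<x$, while $x=y-1+n_l$ is excluded because $x\le n_1-1<n_l-1\le y-1+n_l$. Hence $\mathcal{H}(S)$ is a matching.

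This is a refinement of the proof of Lemma~\ref{lem: cyclic $k$-partite gcd not 1}; the single new ingredient is that coordinates $l\le u$ are harmless because there the first components of the edges come from the one fixed matching $\mathcal{M}_i$, and this is precisely what upgrades the guarantee from $n_1-1$ to $n_1$. I do not expect a real obstacle, only bookkeeping: the step needing care is the wraparound congruence $x\equiv y-1\pmod{n_l}$ for $l\ge t$, where one must use the strict inequality $n_l>n_1$ rather than $n_l\ge n_1$ --- this is exactly where the hypothesis that $n_1=\dots=n_u$ are the only parts of minimum size enters. One should also check the degenerate window positions $p=0$ and $p=n_1-2$ and the cyclic wraparound case $j+1\equiv 0$ modulo the relevant product, but these are subsumed by the same computation.
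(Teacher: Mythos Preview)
Your proposal is correct and follows essentially the same approach as the paper's proof: both take an arbitrary window of $n_1$ consecutive edges, dispose of coordinates $l\le u$ by observing that the first components run over the single matching $\mathcal{M}_i$, and handle coordinates $l\ge u+1$ by invoking Lemma~\ref{lem: rep of int prop} and the strict inequality $n_l>n_1$ to rule out the resulting congruences. The only cosmetic difference is that the paper lists the coordinate values and checks distinctness directly, whereas you phrase it as deriving a contradiction from a supposed shared vertex; the substance is identical.
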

\begin{proof}
Let $\ell = \ell_{i,j}'\vee_{n_1}\ell'_{i,j+1}$.
Consider a sequence $S$ of $n_1$ consecutive edges
in $\ell$.
The edges of $S$ that appear in the matching $\mathcal{M}_{i,j}$ (in order with respect to $\ell$) are
\[
  \bigl((\ell')^{-1}(i n_1 + x),(\overline{\ell_{j}'})^{-1}(x)\bigr), \ldots,
  \bigl((\ell')^{-1}(i n_1 + n_1-1),(\overline{\ell_{j}'})^{-1}(n_1-1)\bigr)
\]
and the edges of $S$ that appear in the matching $\mathcal{M}_{i,j+1}$ (in order with respect to $\ell$) are
\[
  \bigl((\ell')^{-1}(i n_1),(\overline{\ell_{j+1}'})^{-1}(0)\bigr), \ldots ,
  \bigl((\ell')^{-1}(i n_1 + x-1),(\overline{\ell_{j+1}'})^{-1}(x-1)\bigr)
\]
for some $1 \leq x \leq n_1-1$.
The edges $(\ell')^{-1}(i n_1 + 0), \ldots, (\ell')^{-1}(i n_1 + n_1-1)$ form
the matching $\mathcal{M}_i$ and, in particular, every vertex in $[n_l]$ for $1 \leq l \leq u$
has degree $1$ in $\mathcal{H}(S)$.

So without loss of generality, we consider the degree of vertices in $[n_l]$ for $u+1 \leq l \leq k$
in the hypergraph $\mathcal{H}(S')$, where $S' = (\overline{\ell_{j}'})^{-1}(x), \ldots, (\overline{\ell_{j}'})^{-1}(n_1-1), \\
(\overline{\ell_{j+1}'})^{-1}(0), \ldots, (\overline{\ell_{j+1}'})^{-1}(x-1)$.
As we are not concerned with the degree of vertices in $[n_1]$, we can consider the
hypergraph formed by the edges
\begin{equation}\label{eqn: edges of N' bit}
\begin{split}
\langle x^* \rangle_{\overline{n}'}-\langle j \rangle_{\overline{n}'}, &\ldots ,
\langle (n_1-1)^* \rangle_{\overline{n}'}-\langle j \rangle_{\overline{n}'}, \\
\langle 0^* \rangle_{\overline{n}'}-\langle j+1 \rangle_{\overline{n}'}, &\ldots ,
\langle (x-1)^* \rangle_{\overline{n}'}-\langle j+1 \rangle_{\overline{n}'} \,,
\end{split}
\end{equation}
by ignoring the first entry of each edge.
Let $\langle j \rangle_{\overline{n}'} = (j_1 , j_{u+1}, \ldots, j_k)_{\overline{n}'}$.
By Lemma~\ref{lem: rep of int prop}
$\langle j+1 \rangle_{\overline{n}'} = (j_1 , j_{u+1}, \ldots, j_{u+t-1} , j_{u+t}+1, \ldots, j_k+1)_{\overline{n}'}$
for some $1 \leq t \leq k-u$.
For $u+1 \leq l \leq u+t-1$,
the $(l-u+1)$-th entry of the edges in (\ref{eqn: edges of N' bit})
are, modulo $n_l$, $x-j_{l} ,\ldots, n_1-1-j_l$ and $-j_l ,1 -j_l , \ldots, x-1-j_l$,
which are clearly distinct as $n_l > n_1$.
For $u+t \leq l \leq k$,
the $(l-u+1)$-th entry of the edges in (\ref{eqn: edges of N' bit}) modulo $n_l$
are $x-j_{l} ,\ldots, n_1-1-j_l$ and $-j_l-1 , -j_l , \ldots, x-2-j_l$,
which are distinct since $n_l > n_1$.
Thus, every vertex in $[n_l]$ for $u+1 \leq l \leq k$ is incident
with at most one edge in (\ref{eqn: edges of N' bit})
and thus at most one edge in $S$.
Hence, $\mathcal{H}(S)$ is a matching.
\end{proof}
\noindent
\begin{proof}[Proof of Lemma~\ref{lem: hypergraph special cyclic case} ]
By Claim~\ref{claim: reduction cyclic case}, we only need to consider the
case in which $r=n_1^{u-1}$ and $\lambda=1$.
By Claim~\ref{claim: decomp for N'},
$\bigl\lbrace \mathcal{M}'_{i,j}\;:\; i \in [n_1^{u-1}], j \in [N'] \bigr\rbrace$
is a matching decomposition of $\mathcal{K}_{n_1, \ldots , n_k}$.
By Lemma~\ref{lem: seqity hypergraph one min},
$ ms(\ell_{i,j}',\ell_{i,j+1}') \geq n_1$ for all $i \in [n_1^{u-1}]$ and $j \in [N']$.
Hence, by Proposition~\ref{prop: Matching decomposition},
we have that $cms_r(\mathcal{K}_{n_1, \ldots , n_k}) \geq r n_1$,
and so $cms_r(\mathcal{K}_{n_1, \ldots , n_k}) = r n_1$
as required.
\end{proof}

The remainder of this section is devoted to proving
Lemma~\ref{lem: non-cyclic case simplified}.
We assume that $n_1^{u-1} \nmid r_2$,
as the case in which $n_1^{u-1} \mid r_2$ has been shown in
Lemma~\ref{lem: hypergraph special cyclic case}.
Let $r < \lambda N$ be a positive integer and write $r = p n_1^{u-1}+q$
for non-negative integers $p$ and $q$ such that
$0 < q < n_1^{u-1}$,
and recall that $\lambda N = ar+b$.
Then (\ref{eqn: non cyclic assume condition})
can be expressed as
\begin{equation}\label{eqn: Bound on para}
(p+1)a \leq \lambda N' \leq  p(a+1)\,.
\end{equation}
As we are proving Lemma~\ref{lem: non-cyclic case simplified},
we will assume that (\ref{eqn: Bound on para}) holds and thus that $p \neq 0$.

Let
$\alpha = p$,
$\beta  = (p+1)$,
$\gamma = (\lambda N' - ap)$,
$\delta = \bigl(\lambda N' - a(p+1)\bigr)$ and
$ \nu = n_1^{u-1} - q$.
The~identities $r = p n_1^{u-1}+q$ and $n_1^{u-1}\lambda N'= ra + b$
easily yield the following expressions:
\begin{align}
                    \gamma \nu+ \delta q &= b \label{eqn: gamma + delta}\\
(\alpha - \gamma)\nu + (\beta - \delta)q &= r -b \label{eqn: r-b}\\
         a \alpha  + \gamma = \lambda N' &= a \beta  + \delta\,. \label{eqn:number of alpha and beta edges}
\end{align}
By (\ref{eqn: Bound on para}),
each of the numbers $\gamma, \delta , \alpha- \gamma$ and $\beta- \delta$
is non-negative.

Let $\sigma \;: \;[\lambda N'] \rightarrow [\lambda N']$ be a function with the properties given in
Corollary~\ref{cor: General non cyclic ordering} with $s=\alpha$ and $t = \lambda N'$.
Similarly, let $\tau \;: \;[\lambda N'] \rightarrow [\lambda N']$ be a function with the properties given in Corollary~\ref{cor: General non cyclic ordering} with $s=\beta$ and $t = \lambda N'$.
For a fixed pair $(i,j) \in [n_1^{u-1}] \times [\lambda N']$, let $s_{i,j}$ and $t_{i,j}$ be the integers
that satisfy
\[
\begin{cases}
\sigma(j) = s_{i,j} \alpha +t_{i,j}  \text{ with } t_{i,j} \in [\alpha] &
\text{ if } i \in [\nu]\,;   \\
\tau(j) = s_{i,j} \beta +t_{i,j}  \text{ with } t_{i,j} \in [\beta]
&  \text{ otherwise. }
\end{cases}
\]

Let $\rho \;:\; [n_1^{u-1}]\times [\lambda N'] \rightarrow [\lambda N]$ be defined by
\[
\rho(i,j) =
\begin{cases}
s_{i,j} r+\nu t_{i,j}+i &
\text{if } t_{i,j} \in [\gamma] \text{ and } i \in [\nu]\,;\\
s_{i,j} r+\nu \gamma+q t_{i,j} +i-\nu &
\text{if } t_{i,j} \in [\delta] \text{ and } i \in [\nu+q]-[\nu ]\,; \\
s_{i,j} r+b+\nu (t_{i,j}-\gamma)+i &
\text{if } t_{i,j} \in [\alpha]-[\gamma] \text{ and } i \in [\nu]\,;\\
s_{i,j} r+b+\nu(\alpha-\gamma)+qt_{i,j}+i-\nu &
\text{if } t_{i,j} \in [\beta]-[\delta] \text{ and } i \in [\nu+q]-[\nu].
\end{cases}
\]
As $\sigma$ and $\tau$ are bijections of $[\lambda N']$,
(\ref{eqn: Bound on para}) implies that $s_{i,j} \in [a+1]$ for all $i$ and $j$.
Furthermore by (\ref{eqn:number of alpha and beta edges}), if $s_{i,j}=a$,
then $t_{i,j} \in [\gamma]$ if $i \in [\nu]$ and $t \in [\delta]$ otherwise.
Therefore, if $i \in [\nu]$,
then either $\rho(i,j) = s_{i,j} r+\nu t_{i,j}+i \leq ar + \nu(\gamma-1)+\nu-1$
or $\rho(i,j) = s_{i,j} r+b+\nu (t_{i,j}-\gamma)+i \leq (a-1)r+b+\nu (\alpha-1-\gamma)+\nu-1$.
In either case, $\rho(i,j) < \lambda N$,
by (\ref{eqn: gamma + delta}) and (\ref{eqn: r-b}), respectively.
By a similar argument,
$\rho(i,j) < \lambda N$ when $i \in [\nu+q]-[\nu]$,
and $\rho$ is thus well defined.
\begin{lemma}\label{lem: properties of difficult re-index non-cyclic case}
The function $\rho$ is an ordering of $[n_1^{u-1}]\times [\lambda N']$
with the property that
if $\rho(i,j)\in[\lambda N-r]$,
then $\rho(i,j+1) = \rho(i,j)+r$.
\end{lemma}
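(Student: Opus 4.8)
The plan is to verify the two claimed properties of $\rho$ separately: first that $\rho$ is a bijection from $[n_1^{u-1}]\times[\lambda N']$ to $[\lambda N]$, and then that it satisfies the shift property $\rho(i,j+1)=\rho(i,j)+r$ whenever $\rho(i,j)\in[\lambda N-r]$. Note that both domain and codomain have $n_1^{u-1}\lambda N'=\lambda N$ elements, so for bijectivity it suffices to prove injectivity. The four cases in the definition of $\rho$ partition the domain according to whether $i\in[\nu]$ or $i\in[\nu+q]-[\nu]$ and, within each, according to the residue $t_{i,j}$; moreover the ranges of the four cases are designed to be disjoint blocks. Concretely, for a fixed value of $s_{i,j}=s$, the first case lands in $[sr,\,sr+\nu\gamma)$, the second in $[sr+\nu\gamma,\,sr+b)$ (using $\nu\gamma+q\delta=b$ from (\ref{eqn: gamma + delta})), the third in $[sr+b,\,sr+b+\nu(\alpha-\gamma))$, and the fourth in $[sr+b+\nu(\alpha-\gamma),\,sr+r)$ (using (\ref{eqn: r-b})); since distinct values of $s$ give disjoint intervals $[sr,sr+r)$, I would argue that these twelve-ish sub-blocks are pairwise disjoint and each of the four affine maps is injective on its own piece (the map $t_{i,j}\mapsto \nu t_{i,j}+i$ is injective on $[\alpha]\times[\nu]$ because $i<\nu$, and similarly $t_{i,j}\mapsto qt_{i,j}+(i-\nu)$ is injective on the relevant product since $i-\nu<q$). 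Injectivity of $\rho$ then reduces to injectivity of $(i,j)\mapsto(s_{i,j},t_{i,j})$ on each of the two strata $i\in[\nu]$ and $i\in[\nu+q]-[\nu]$, which follows because for fixed $i$ the map $j\mapsto(s_{i,j},t_{i,j})$ is just $\sigma$ (resp. $\tau$) followed by Euclidean division, hence a bijection of $[\lambda N']$.

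For the shift property, fix $(i,j)$ with $\rho(i,j)\in[\lambda N-r]$. The key point is that incrementing $j$ to $j+1$ either increments $t_{i,j}$ by $1$ and leaves $s_{i,j}$ fixed, or it resets $t$ and increments $s$ — and by the defining property from Corollary~\ref{cor: General non cyclic ordering}, the second (``carry'') scenario happens precisely when $\sigma(j)\geq\lambda N'-\alpha$ (resp. $\tau(j)\geq\lambda N'-\beta$). I would first handle the case where no carry occurs: then $s_{i,j+1}=s_{i,j}$ and $t_{i,j+1}=t_{i,j}+1$, and one checks case by case that $\rho$ increases by exactly $\nu$, $q$, $\nu$, or $q$ — but this is not yet $r$; rather, the increment by $r$ accumulates across the $\nu$ steps through the first/third block pieces and the $q$ steps through the second/fourth block pieces. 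So the honest statement is: as $t_{i,j}$ runs through its full range $[\alpha]$ (for $i\in[\nu]$) interleaved with the block-boundary jumps, $\rho$ advances by exactly $r$ each time the ``global'' index $\sigma(j)$ (resp. $\tau(j)$) advances by one. Precisely, I claim $\rho(i,j)$ depends on $j$ only through $\sigma(j)$ (for $i\in[\nu]$) via a fixed increasing step function, and the total rise of that step function over one unit of $\sigma(j)$ is $r$: inside a block it is the block's slope ($\nu$ or $q$) but the count of $t$-values in the two $\nu$-blocks is $\alpha=\gamma+(\alpha-\gamma)$ and in the two $q$-blocks is... — this requires tracking that the boundary jumps ($\nu\gamma\to\nu\gamma$, etc.) are transparent. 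The cleanest route is: for $i\in[\nu]$, show $\rho(i,j)=F(\sigma(j))$ where $F(s\alpha+t)=sr+G(t)$ for an explicit increasing $G:[\alpha]\to[r]$ with $G(0)$ and $G(\alpha-1)$ computed so that $F(m+1)=F(m)+r$ when we cross $t=\alpha-1$, i.e. $sr+G(0)+r = (s+1)r+G(0)$ trivially, and $F(m+1)=F(m)+r$ when $t<\alpha-1$ means $G(t+1)=G(t)+r$ — which is false! So I must instead recognize that the intended property is about incrementing $j$, and that $\sigma(j+1)=\sigma(j)+\alpha$ (not $+1$) whenever $\sigma(j)\leq\lambda N'-\alpha-1$.

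Re-examining: Corollary~\ref{cor: General non cyclic ordering} gives $\sigma(j+1)=\sigma(j)+\alpha$ when $\sigma(j)\leq\lambda N'-\alpha-1$. So incrementing $j$ by one increments $\sigma(j)$ by $\alpha$, hence keeps $t_{i,j}$ fixed and increments $s_{i,j}$ by one — so $\rho(i,j+1)=\rho(i,j)+r$ directly from the formula, in every one of the four cases, since the formula is $s_{i,j}\cdot r + (\text{terms depending only on }t_{i,j}, i)$. I would therefore argue: given $\rho(i,j)\in[\lambda N-r]$, one shows $\sigma(j)\leq\lambda N'-\alpha-1$ (resp. $\tau(j)\leq\lambda N'-\beta-1$), because if instead $\sigma(j)\geq\lambda N'-\alpha$ then $s_{i,j}=a$, forcing $t_{i,j}\in[\gamma]$ by (\ref{eqn:number of alpha and beta edges}), whence $\rho(i,j)=ar+\nu t_{i,j}+i\geq ar$, and one checks $ar+\nu t_{i,j}+i\geq \lambda N - r$ is incompatible with $\rho(i,j)<\lambda N-r$ using $\lambda N=ar+b$ and $b<r$ (and the analogous computation for the $\tau$/$\beta$ branch and for $i\in[\nu+q]-[\nu]$). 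Then $\sigma(j+1)=\sigma(j)+\alpha$ gives $s_{i,j+1}=s_{i,j}+1$, $t_{i,j+1}=t_{i,j}$, so the same case of the formula applies and $\rho(i,j+1)=\rho(i,j)+r$. The main obstacle is the bookkeeping in the bijectivity argument — verifying that the four affine pieces have genuinely disjoint images and that the boundary terms $\nu\gamma$, $b$, $\nu(\alpha-\gamma)$ line up exactly; identities (\ref{eqn: gamma + delta})--(\ref{eqn:number of alpha and beta edges}) are precisely what make this work, and I would invoke them explicitly at each boundary rather than recompute.
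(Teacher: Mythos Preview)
Your final argument is essentially the paper's own proof: injectivity via showing the four affine pieces land in disjoint sub-intervals of $[sr,(s+1)r)$ (using (\ref{eqn: gamma + delta})--(\ref{eqn: r-b}) to check the boundaries line up), and the shift property via Corollary~\ref{cor: General non cyclic ordering} giving $\sigma(j+1)=\sigma(j)+\alpha$, hence $s_{i,j+1}=s_{i,j}+1$ and $t_{i,j+1}=t_{i,j}$.

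There is one small gap in your contrapositive for the shift property. You claim that $\sigma(j)\geq\lambda N'-\alpha$ forces $s_{i,j}=a$, but since $\lambda N'-\alpha=(a-1)\alpha+\gamma$, the inequality is also satisfied when $s_{i,j}=a-1$ and $t_{i,j}\in[\alpha]-[\gamma]$. You need to handle that case too: there $\rho(i,j)=(a-1)r+b+\nu(t_{i,j}-\gamma)+i\geq(a-1)r+b=\lambda N-r$, so again $\rho(i,j)\notin[\lambda N-r]$. (The paper argues directly rather than by contrapositive, deducing from $\rho(i,j)<(a-1)r+b$ that $s_{i,j}\leq a-1$, with equality only if $t_{i,j}\in[\gamma]$; either route works.) Also, the long detour in your second paragraph---where you briefly thought incrementing $j$ increments $t_{i,j}$---should simply be deleted; the correct realization is exactly what you reach after ``Re-examining''.
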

\begin{proof}
We first check that $\rho$ is an ordering of $[n_1^{u-1}]\times [\lambda N']$.
Suppose that $\rho(i,j) = \rho(i',j')$.
By inspection, we have that
\[
\rho(i,j) -s_{i,j}r \in
\begin{cases}
[\nu \gamma] &
\text{ for } t_{i,j} \in [\gamma] \text{ and } i \in [\nu ]\,;\\
[b] -[ \nu \gamma] &
\text{ for } t_{i,j} \in [\delta] \text{ and } i \in [\nu +q]-[\nu]\,;\\
[b+\nu (\alpha-\gamma)]-[b]&
\text{ for } t_{i,j} \in [\alpha]-[\gamma] \text{ and } i \in [\nu]\,;\\
[r] - [b+\nu (\alpha-\gamma)]&
\text{ for } t_{i,j} \in [\beta]-[\delta] \text{ and } i \in [\nu+q]-[\nu]\;,
\end{cases}
\]
and $\rho(i',j')-s_{i',j'}r$ has the analogous property.
Therefore, $s_{i,j} = s_{i',j'}$ and
either $i ,i' \in [\nu]$ or $i,i' \in [\nu+q]-[\nu]$.
Thus, by the definition of $\rho$,
\begin{equation}\label{eqn:bij proof of diff ord}
0 = \rho(i,j)-\rho(i',j') =
\begin{cases}
\nu(t_{i,j}-t_{i',j'})+i-i' &\text{ if } i,i' \in [\nu ]\,;\\
q(t_{i,j}-t_{i',j'})+i-i' &\text{ if } i,i' \in [\nu+q ]-[\nu]\,.
\end{cases}
\end{equation}
However, $|i-i'| \in [\nu]$ if $i,i' \in [\nu]$,
 and $|i-i'| \in [q]$ if $i,i' \in [\nu+q ]-[\nu]$.
Hence, (\ref{eqn:bij proof of diff ord}) implies that $t_{i,j}=t_{i',j'}$ and $i=i'$.
Therefore, $j = \sigma^{-1}(s_{i,j}\alpha + t_{i,j}) = \sigma^{-1}(s_{i',j'}\alpha + t_{i',j'})=j'$
if $i\in [\nu]$,
and, similarly, $j = j'$ if $i \in [\nu +q]-[\nu]$.
Thus, $(i,j)=(i',j')$, and so $\rho$ is injective.
Since $|[n_1^{u-1}]\times [\lambda N']| = |[\lambda N]|$, $\rho$ is a bijection and,
hence, an ordering of $[n_1^{u-1}]\times [\lambda N']$.

We now check that $\rho$ satisfies the property given in the lemma.
Suppose that $\rho(i,j) \in [\lambda N-r]$.
If $i \in [\nu]$, then $s_{i,j}r + \nu t_{i,j}+i < (a-1)r+b$ if $t \in [\gamma]$,
and $s_{i,j}r+b + \nu (t_{i,j} -\gamma)+i < (a-1)r+b$ otherwise.
Therefore, $s_{i,j} \leq a-1$ and if $s_{i,j} = a-1$, then $t_{i,j} \in [\gamma]$.
Thus, $s_{i,j}\alpha +t_{i,j} < \lambda N' - \alpha$ and so,
by Corollary~\ref{cor: General non cyclic ordering},
$\sigma(j+1) = (s_{i,j}+1)\alpha +t_{i,j}$.
By a similar argument,
$\tau(j+1) = (s_{i,j}+1)\beta + t_{i,j}$ for $i \in [\nu + q]-[\nu]$.
Hence, in any case, $s_{i,j+1} = s_{i,j} +1 $ and $t_{i,j+1} = t_{i,j}$.
By the definition of $\rho$,
$\rho(i,j+1)-s_{i,j+1}r = \rho(i,j)-s_{i,j}r$,
and so $\rho(i,j+1)-\rho(i,j) = s_{i,j+1}r - s_{i,j}r = r$.
Rearranging yields the required expression.
\end{proof}
\noindent
\begin{proof}[Proof of Lemma~\ref{lem: non-cyclic case simplified}]
By Claim~\ref{claim: reduction non-cyclic case},
we only need to consider the cases in which $1 \leq r < \lambda N$.
Let $\mathcal{M}_l = \mathcal{M}'_{\rho^{-1}(l)}$
and $\ell_l = \ell'_{\rho^{-1}(l)}$ for all $l \in [\lambda N]$.
We check that the conditions of Proposition~\ref{prop: Matching decomposition both}
are satisfied for the matchings
$\mathcal{M}_{0}, \ldots, \mathcal{M}_{\lambda N-1}$
of $\mathcal{K}_{n_1, \ldots , n_k}$.
By Claim~\ref{claim: decomp for N'},
$\bigl\lbrace \mathcal{M}_{0}, \ldots, \mathcal{M}_{\lambda N-1}\bigr\rbrace$
is a matching decomposition of $\mathcal{K}_{n_1, \ldots , n_k}$.
For $l \in [\lambda N-r]$,
let $\rho^{-1}(l) = (i,j)$ and so $\rho(i,j) =l$.
By Lemma~\ref{lem: properties of difficult re-index non-cyclic case},
$\rho(i,j+1) = \rho(i,j)+r = l+r$
and, as $\rho$ is a bijection, $\rho^{-1}(l+r) = (i,j+1)$.
Hence, $ms(\ell_{l},\ell_{l+r}) = ms(\ell_{i,j},\ell_{i,j+1}) \geq n_1$,
by Lemma~\ref{lem: seqity hypergraph one min}, and the proof then follows
from Proposition~\ref{prop: Matching decomposition both}.
\end{proof}

\section{Proof of Theorem~\ref{thm: hypergraph gen r}: Conclusion}
\label{sec: proof conclusion}

\begin{proof}[Proof of Theorem~\ref{thm: hypergraph gen r}]\hspace{-1.21pt}By Lemma \ref{lem: sequity gen case},~$ms_r(\lambda \mathcal{K}_{n_1, \ldots , n_k})$~and~$cms_r(\lambda \mathcal{K}_{n_1, \ldots , n_k})$ are each
either $rn_1 - 1$ or $rn_1$.
By Lemmas~\ref{lem: hypergraph necessary condition}
and \ref{lem: non-cyclic case simplified},
$ms_r(\lambda \mathcal{K}_{n_1, \ldots , n_k}) = rn_1$
if and only if $n_1^{u-1} \mid r$ or
(\ref{eqn: cond hypergraph comp}) holds. Thus,
\[
ms_r(\lambda\mathcal{K}_{n_1,\ldots,n_k}) =
\begin{cases}
rn_1 &\text{ if } n_1^{u-1}\mid r_2
\text{ or } (\ref{eqn: cond hypergraph comp}) \text{ holds}\,; \\
rn_1-1 &\text{ otherwise}\,.
\end{cases}
\]
Similarly by Lemmas~\ref{lem: hypergraph cyclic necessary condition}
and \ref{lem: hypergraph special cyclic case}
$cms_r(\lambda \mathcal{K}_{n_1, \ldots , n_k}) = rn_1$
if and only if $n_1^{u-1} \mid r$ and, thus,
\[
\hspace*{-64pt} cms_r(\lambda\mathcal{K}_{n_1,\ldots,n_k}) =
\begin{cases}
rn_1 &\text{ if } n_1^{u-1}\mid r_2\,;
\\
rn_1-1 &\text{ otherwise}\,.
\end{cases}
\]
\end{proof}

\section{Concluding Remarks}\label{sec: Con}

One can show,
for the special case in which $p=1,q=0$, $\lambda=1$,
and where $\sigma$ is the identity function on $[\lambda N']$,
that the function $\rho$ defined in Section~\ref{sec: remaining}
reduces to the much simpler function
$\rho(i,j) = jr+i$ for all $i \in [n_1^{u-1}]$
and $j \in [N']$ and, furthermore, that it satisfies a cyclic analogue of
Lemma~\ref{lem: properties of difficult re-index non-cyclic case},
namely $\rho(i,j+1) = (\rho(i,j) + r) \,\,\,  \text{modulo} \, N$
for all $i \in [n_1^{u-1}]$ and $j \in [N']$.
The given proof of Lemma~\ref{lem: hypergraph special cyclic case}
implicitly uses this $\rho$:
Proposition~\ref{prop: Matching decomposition} uses Lemma~\ref{lem: General cyclic ordering}.
The cyclic construction in the previous section is thus a very special case of the
non-cyclic construction.

Though the hypergraphs in this paper attain the lower bounds
in Lemma~\ref{lem: seqity gen r}, there are hypergraphs
which do not. Consider the graph $G$ below.
\begin{figure}[h]
\begin{minipage}[b][5cm]{.44\textwidth}
\centering
\begin{tikzpicture}[thick,scale=.7175]
  \pgfmathsetlengthmacro\scfac{2.5cm}
  \pgfmathsetlengthmacro\scfacnew{1.41*\scfac}
  \pgfmathsetmacro{\sepang}{120}
  \pgfmathtruncatemacro{\c}{3}
\draw[line width = \scfac*0.02, color = blue]{

(0,0)-- node[pos = 0.5, above right] {0} (135:\scfacnew)
(0,0) -- node[pos = 0.5, below right] {2} (135+90:\scfacnew)
(0,0) -- node[pos = 0.5, below left] {4} (-45:\scfacnew)
(135:\scfacnew)--node[color = black, pos = 0.5, left] {$e$} (135+90:\scfacnew)
(45:0.5*\scfacnew)+(-45:0.5*\scfacnew)--node[pos = 0.5, right] {} (-45:\scfacnew)
(45:0.5*\scfacnew)+(-45:0.5*\scfacnew)--node[pos = 0.5,color = black, right] {$e'$} (45:\scfacnew)

};
\draw[line width = \scfac*0.015, color = black!50]{
(0,0) node[circle, draw, fill=black!10,inner sep=\scfac*0.015, minimum width=\scfac*0.08] {}
(0,0) node[anchor=south,yshift=-\scfac*0.25,color = black]{$v$}
(135+0:\scfacnew) node[circle, draw, fill=black!10,inner sep=\scfac*0.015, minimum width=\scfac*0.08] {}
(135+90:\scfacnew) node[circle, draw, fill=black!10,inner sep=\scfac*0.015, minimum width=\scfac*0.08] {}
(-45:\scfacnew) node[circle, draw, fill=black!10,inner sep=\scfac*0.015, minimum width=\scfac*0.08] {}
(45:0.5*\scfacnew)+(-45:0.5*\scfacnew) node[circle, draw, fill=black!10,inner sep=\scfac*0.015, minimum width=\scfac*0.08] {}
(45:\scfacnew) node[circle, draw, fill=black!10,inner sep=\scfac*0.015, minimum width=\scfac*0.08] {}
};
\end{tikzpicture}
\caption[]{The graph $G$}
\label{fig:graph G}
\end{minipage}
\hspace*{0pt}
\begin{minipage}[b][5cm]{.44\textwidth}
\centering
\begin{tikzpicture}[thick,scale= .7175]
  \pgfmathsetlengthmacro\scfac{2.5cm}
  \pgfmathsetlengthmacro\scfacnew{1.41*\scfac}
  \pgfmathsetmacro{\sepang}{120}
  \pgfmathtruncatemacro{\c}{3}
\draw[line width = \scfac*0.02, color = blue]{
(0,0) -- node[pos = 0.6, left] {7} (0+90:\scfac)
(0,0) -- node[pos = 0.5, below ] {1} (90+90:\scfac)
(0,0) -- node[pos = 0.5, left] {9} (180+90:\scfac)
(0,0) -- node[pos = 0.6, below] {4} (270+90:\scfac)

(0+90:\scfac) -- node[pos = 0.5, above left] {5} (90+90:\scfac)
[bend left] (0+90:\scfac) to node[pos = 0.6,below right] {3} (180+90:\scfac)
(0+90:\scfac) -- node[pos = 0.5, above right] {10} (270+90:\scfac)

(90+90:\scfac) -- node[pos = 0.5, below left] {11} (180+90:\scfac)
[bend left] (90+90:\scfac) to node[pos = 0.25, above right] {8} (270+90:\scfac)

(180+90:\scfac) -- node[pos = 0.5, below right] {6} (270+90:\scfac)

(270+90:\scfac) -- node[pos = 0.5, above left] {12} ($(45:\scfac)+(270+90:\scfac)$)
(270+90:\scfac) -- node[pos = 0.5, above] {2} ($(0:\scfac)+(270+90:\scfac)$)
(270+90:\scfac) -- node[pos = 0.5, below left] {0} ($(-45:\scfac)+(270+90:\scfac)$)
};
\draw[line width = \scfac*0.015, color = black!50]{
(0+90:\scfac) node[circle, draw, fill=black!10,inner sep=\scfac*0.015, minimum width=\scfac*0.08] {}
(90+90:\scfac) node[circle, draw, fill=black!10,inner sep=\scfac*0.015, minimum width=\scfac*0.08] {}
(180+90:\scfac) node[circle, draw, fill=black!10,inner sep=\scfac*0.015, minimum width=\scfac*0.08] {}
(270+90:\scfac) node[circle, draw, fill=black!10,inner sep=\scfac*0.015, minimum width=\scfac*0.08] {}
(270+90:\scfac) node[anchor=south,yshift=-\scfac*0.3,color = black]{$v'$}
(0,0) node[circle, draw, fill=black!10,inner sep=\scfac*0.015, minimum width=\scfac*0.08] {}
(45:\scfac)+ (270+90:\scfac) node[circle, draw, fill=black!10,inner sep=\scfac*0.015, minimum width=\scfac*0.08] {}
(0:\scfac)+(270+90:\scfac) node[circle, draw, fill=black!10,inner sep=\scfac*0.015, minimum width=\scfac*0.08] {}
(-45:\scfac)+(270+90:\scfac) node[circle, draw, fill=black!10,inner sep=\scfac*0.015, minimum width=\scfac*0.08] {}
};
\end{tikzpicture}
\caption[]{The graph $H$}
\label{fig:graph H}
\end{minipage}
\end{figure}
First, we check that $cms(G) = 1$.
Suppose otherwise, that $cms(\ell)=2$ for some ordering $\ell$ of~$G$.
As $G$ has $6$ edges and the vertex $v$ has degree $3$, the edges incident with $v$
are, without loss of generality, labelled as depicted in Figure~\ref{fig:graph G}.
However, for any choice of a label for the edge $e$,
there will be two cyclically consecutive edges incident with a common vertex.
Thus, $cms(G) = 1$.
On the other hand, it is easy to check that,
for any ordering $\ell$ of $G$ with the edges incident
with $v$ labelled as depicted, $cms_4(\ell) \geq 8$.
As $\Delta(G) = 3$ and $|E(G)|=6$,
the lower bound of Lemma~\ref{lem: seqity gen r} for $G$ when $r=4$ is
$1 \times 6 + cms_1(G) = 7 < 8 \leq cms_4(G)$.
By similar reasoning,
the graph $G'$ obtained from $G$ by removing the edge~$e'$ satisfies
$ms(G')=1$ and $ms_4(G') \geq 7$,
which is strictly above the lower bound given by Lemma~\ref{lem: seqity gen r}.
The bounds in Lemma~\ref{lem: seqity gen r} are thus not always achieved.

We can also show that Lemma~\ref{lem: multi from non-multi}
is no longer true if cyclic-sequencibility is replaced by non-cyclic sequencibility.
Consider the graph $H$ in Figure~\ref{fig:graph H}.
It is easy to verify that the ordering $\ell$ of $H$
depicted in Figure~\ref{fig:graph H} satisfies $ms(\ell) = 2$
and, in particular, that $ms(G) \geq 2$.
The graph $2H$ has 24 edges, 14 of which are incident with~$v$.
Therefore, for any ordering $\ell'$ of $2H$ corresponding to the sequence of edges
$e_0, \ldots, e_{23}$,
at least one of the 12 pairs of edges $e_{2i},e_{2i+1}$ for $i \in [12]$
has both of its edges incident with $v$, by the Pigeonhole Principle.
Thus, no ordering $\ell'$ of $2H$ can satisfy $ms(\ell') \geq 2$,
and so $ms(2H)=1 < 2 = ms(H)$.
So, there is no non-cyclic sequencibility analogue of Lemma~\ref{lem: multi from non-multi}.

We end the paper with the following conjecture on the matching sequencibility of
complete multi-partite graphs.
Let $K_{s(n)}$ be the complete 
$s$-partite graph with parts of size $n$.
\begin{conjecture}
For any integers $n \geq 2$ and $s\geq 2$,
\[
ms(K_{s(n)}) = cms(K_{s(n)}) =
\left\lfloor \frac{sn}{2} \right\rfloor -1 \,.
\]
\end{conjecture}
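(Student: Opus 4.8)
The plan is to establish the two inequalities $cms(K_{s(n)}) \ge \lfloor sn/2\rfloor - 1$ and $ms(K_{s(n)}) \le \lfloor sn/2\rfloor - 1$ separately. Since $cms_r(\mathcal{H}) \le ms_r(\mathcal{H})$ always holds, these two bounds together give $ms(K_{s(n)}) = cms(K_{s(n)}) = \lfloor sn/2\rfloor - 1$, so it suffices to prove the cyclic lower bound and the non-cyclic upper bound.

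For the upper bound, observe first that a set of consecutive edges that forms a matching has at most $\lfloor sn/2\rfloor$ edges, so $ms(K_{s(n)}) \le \lfloor sn/2\rfloor$ trivially; the work is to exclude equality. When $sn$ is even, equality would force every window of $sn/2$ consecutive edges to be a perfect matching, and comparing two windows $e_0,\ldots,e_{sn/2-1}$ and $e_1,\ldots,e_{sn/2}$ shows that the single entering edge $e_{sn/2}$ must re-cover both endpoints of the departing edge $e_0$, hence is a second edge on the same pair of vertices, contradicting simplicity. When $sn$ is odd the situation is subtler: equality forces every window of $(sn-1)/2$ consecutive edges to miss exactly one vertex $w_i$, and a short argument shows $w_{i+1}$ is an endpoint of $e_i$ different from $w_i$ while the entering edge joins $w_i$ to the other endpoint of $e_i$. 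I would couple this local ``moving hole'' dynamics with a global double count: summing over all windows, the number of windows in which a vertex $v$ is the missed vertex is controlled by the gaps between consecutive $v$-incident edges in the ordering, and a convexity (Jensen-type) estimate forces the total to be at least $|E(K_{s(n)})|$, with equality only if every such gap takes one of two consecutive values whose average $sn/2-1$ is not an integer. Turning this near-rigid structure into an outright contradiction, by feeding the hole dynamics back into the gap count, is the step I expect to be the main obstacle.

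For the lower bound when $sn$ is even, $K_{s(n)}$ is an $(s-1)n$-regular graph on $sn$ vertices with $(s-1)n \ge sn/2$, and is therefore $1$-factorizable (by known results on edge-colourings of complete equipartite graphs, or on $1$-factorizations of dense regular graphs); fix a decomposition into $(s-1)n$ perfect matchings $F_0,\ldots,F_{(s-1)n-1}$, each of size $sn/2$. Ordering these matchings cyclically and ordering the edges within each $F_j$ so that, for every cyclically consecutive pair $F_j, F_{j+1}$, the trailing edges of $F_j$ and the leading edges of $F_{j+1}$ never share a vertex --- that is, $ms(\ell_j,\ell_{j+1}) \ge sn/2 - 1$ --- then yields $cms(K_{s(n)}) \ge sn/2 - 1$ by Proposition~\ref{prop: Matching decomposition both} with $r = 1$. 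The required interleaving reduces to choosing, at each vertex $v$, the position of the $F_j$-edge at $v$ inside $F_j$ so that these positions change by at most $1$ as $j$ runs cyclically, which I expect to be achievable by an argument in the spirit of Lemma~\ref{lem: General cyclic ordering}; for $s = 2$ this specialises to the construction underlying Theorem~\ref{thm: bipartite}.

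When $sn$ is odd there is no $1$-factorization --- indeed no partition of $E(K_{s(n)})$ into equal-size maximum matchings --- so Proposition~\ref{prop: Matching decomposition both} does not apply verbatim, and this is the second place where I expect genuine difficulty. One plausible route is to decompose $K_{s(n)}$, which is even-regular of odd order, into $2$-factors by Petersen's theorem (or into Hamilton cycles, which complete equipartite graphs of even degree are known to admit) and to build the ordering on top of this structure; another is to work with near-$1$-factors of non-uniform size and suitably generalise the combining results of Section~\ref{sec:preliminaries}. Either way one must still guarantee that every window of $(sn-3)/2$ consecutive edges is a matching, and making this work uniformly for all $s$ and $n$ with $sn$ odd is where the remaining effort is concentrated.
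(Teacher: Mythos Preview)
The statement you are attempting to prove is presented in the paper as a \emph{conjecture}, not a theorem: the paper offers no proof of it at all, so there is nothing in the paper to compare your argument against. Your proposal is therefore not a reconstruction of an existing proof but an attack on an open problem.

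As such, your own write-up is honest about its status: you explicitly flag two places where you do not have an argument, namely the exclusion of equality when $sn$ is odd (the ``moving hole'' dynamics coupled with a convexity count that you say you cannot yet close) and the lower bound when $sn$ is odd (where Proposition~\ref{prop: Matching decomposition both} does not apply because there is no decomposition into equal-size perfect matchings). Even in the $sn$ even case your lower-bound argument is not complete: the existence of a $1$-factorization is fine, but the claim that one can always order the edges within each $F_j$ so that $ms(\ell_j,\ell_{j+1})\ge sn/2-1$ for \emph{every} cyclically consecutive pair is asserted rather than proved, and ``an argument in the spirit of Lemma~\ref{lem: General cyclic ordering}'' is not a proof. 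That interleaving condition is a genuine combinatorial constraint on the $1$-factorization, not something that follows from an arbitrary one, and it is exactly here that the difficulty of the conjecture lies. In short, what you have written is a reasonable plan of attack with the hard steps correctly identified, but it is not a proof, and the paper does not supply one either.
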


\section*{Acknowledgements}
I thank my supervisor Thomas Britz for very helpful discussions during the writing of this paper.

\end{document}